\documentclass[10pt]{amsart}
\usepackage{amsmath,amsfonts,amssymb,amsthm}
\usepackage{graphics,graphicx}
\usepackage[colorlinks,hyperindex,linkcolor=blue,
citecolor=purple]{hyperref}
\hypersetup{
pdfauthor = {B1},
 pdftitle= {Generalized Pseudo-Holomorphic Curves},
 pdfsubject = {Stuff}}
\usepackage{tikz-cd}
\usepackage{tikz}
\usepackage[all]{xy}
\usepackage{subfiles}
\usetikzlibrary{matrix,arrows,decorations.pathmorphing}
\usepackage{xspace}

\newcommand{\Wedge}{\scalebox{1.2}{$\wedge$}}

\newcommand{\Ccal}{{\mathcal{C}}}
\newcommand{\Dcal}{{\mathcal{D}}}

\newcommand{\Jcal}{{\mathcal{J}}}

\newcommand{\Ocal}{{\mathcal{O}}}
\newcommand{\Pcal}{{\mathcal{P}}}

\newcommand{\CC}{\mathbb{C}}

\newcommand{\RR}{\mathbb{R}}

\newcommand{\TT}{\mathbb{T}}

\newcommand{\VV}{\mathbb{V}}

\newcommand{\ZZ}{\mathbb{Z}}

\newcommand{\Z}{\mathbb{Z}}

\newcommand{\Hom}{\operatorname{Hom}}

\newcommand{\id}{\textbf{\textit{I}}}

\newcommand{\End}{\operatorname{End}}

\newcommand{\codim}{{\operatorname{codim}}}

\newcommand{\set}[1]{\lbrace #1\rbrace}

\newcommand{\rr}{\mathbb{R}}

\newcommand{\cc}{\mathbb{C}}

\newcommand{\C}{\mathcal{C}}

\DeclareMathOperator{\Sym}{Sym}

\DeclareMathOperator{\Ho}{Ho}

\renewcommand{\hat}{\widehat}

\newcommand{\Abf}{\mathbf{A}}
\DeclareMathOperator{\type}{type}

\newcommand{\Fuk}{{\operatorname{Fuk}}}
\newcommand{\Dbcoh}{{\operatorname{D^b_{coh}}}}
\newcommand{\ii}{\mathbf{i}}
\newcommand{\D}{{\mathcal{D}}}
\usepackage{thmtools}
\declaretheorem[style=definition,qed=$\diamondsuit$]{definition}
\declaretheorem[style=definition,qed=$\triangle$,sibling=definition]{example}
\declaretheorem[style=plain,sibling=definition]{theorem}
\declaretheorem[style=plain,sibling=definition]{lemma}
\declaretheorem[style=plain,sibling=definition]{proposition}
\declaretheorem[style=plain,sibling=definition]{corollary}
\declaretheorem[style=definition,qed=$\diamondsuit$,sibling=example]{claim}

\declaretheorem[style=definition,qed=$\diamondsuit$,sibling=claim]{remark}

\declaretheorem[style=definition,sibling=example]{conjecture}
\newtheorem*{claim*}{Claim}
\numberwithin{theoremalpha}{section}

\numberwithin{equation}{section}
\numberwithin{definition}{section}
\numberwithin{theorem}{section}
\numberwithin{proposition}{section}
\numberwithin{lemma}{section}
\numberwithin{example}{section}
\numberwithin{remark}{section}
\numberwithin{corollary}{section}
\numberwithin{question}{section}
\numberwithin{problem}{section}
\numberwithin{assumption}{section}

\newtheorem*{theorem*}{Theorem}

\newcommand{\btrans}[1]{ 
e^{#1}
}
\newcommand{\betatrans}[1]{
e^{#1}
}

\newcommand{\tbeta}{\tilde{\beta}}
\newcommand{\tB}{\tilde{B}}

\newtheoremstyle{named}{}{}{\itshape}{}{\bfseries}{.}{.5em}{\thmnote{#3}#1}
\theoremstyle{named}
\newtheorem*{namedtheorem}{}

\newtheoremstyle{nameddefinition}{}{}{\normalfont}{}{\bfseries}{.}{.5em}{\thmnote{#3}#1}
\theoremstyle{nameddefinition}
\newtheorem*{nameddefinition}{}

\numberwithin{equation}{section}
\title{The derived category of a generalized complex torus}
\author{Patrick Antweiler}
\address{Fachbereich Mathematik, Universität Hamburg, Germany}
\email{patrick.antweiler@uni-hamburg.de}

\author{Vicente Cortés}
\email{vicente.cortes@uni-hamburg.de}

\author{Julian Holstein}
\email{julian.holstein@uni-hamburg.de}

\author{Markus Röser}
\email{markus.roeser@uni-hamburg.de}

\author{Aldo Witte}
\email{aldo.witte@uni-hamburg.de}

\begin{document}

\begin{abstract}
    For any translation invariant generalized complex structure $\Jcal$ on a torus we construct the derived category $\D(\Jcal)$ generalizing the derived category of coherent sheaves on a complex torus and the affine/convergent Fukaya category of a symplectic torus.
  The category is defined in terms of a non-commutative curved deformation of the Dolbeault algebra of a complex torus.
  We show that this category is independent of the chosen deformation path, which we prove using non-commutative Fourier-Mukai duality. 
  We also prove that the action of $O(2n,2n; \mathbb Z)$ on the space of generalized complex structures leaves the category $\D(\Jcal)$ invariant. In particular, T-duality acts trivially on $\D(\Jcal)$.
  We construct concrete objects associated to affine Lagrangians in the derived category associated to a symplectic torus and show that their spaces of morphisms are given by Floer cohomology. 
  We show that our construction is consistent with the homological mirror symmetry conjecture (including its non-commutative version).
\end{abstract}
\maketitle
\tableofcontents
\setlength{\parskip}{3mm plus0.4mm minus0.4mm}
\section{Introduction}
\emph{Homological mirror symmetry} (HMS), as conjectured by Kontsevich \cite{HAMS}, is a fundamental duality between symplectic and complex geometry. It is described as an appropriate equivalence between the \emph{Fukaya category} of a symplectic manifold (`The A-model') and the \emph{derived category of coherent sheaves} of a complex manifold (`The B-model'). HMS provides a precise mathematical statement in which way type IIA and type IIB string theories can be considered equivalent.

Although a priori a duality between complex and symplectic manifolds, already in \cite{Fukaya02} it is postulated that the mirror of a generic complex torus should be described as a non-commutative space. To approach these spaces in a geometric manner, we will use the framework of \emph{generalized complex (GC) geometry}.

 As introduced in \cite{Hitchin,Gualtieri:2003dx} GC geometry arose from the search for the mathematical structure of the $\mathcal N=(2,2)$ supersymmetric sigma model (\cite{Witten}): the physical theory describing the aforementioned string theories. GC geometry provides a framework in which symplectic and complex geometry are unified into a common structure, but it further describes interpolations between these structures as well as non-commutative deformations of complex structures.

 There is strong evidence that there should be a derived category associated to a GC structure unifying the A and B-model, which goes by the name of `category of GC branes'. For instance, Gualtieri shows that infinitesimal deformations of the B-model are equivalent to the deformations of a complex structure within the space of GC structures. Therefore, GC geometry is a natural place to study mirror symmetry and over the last 25 years there has been significant work in this direction, see e.g.\ \cite{CG11}, \cite{BenBassat2006}, \cite{kobayashi2021remarks} and \cite{BischoffGualtieri2022}. However, no satisfactory definition of the category of GC branes has been achieved.

In this paper we propose a new approach to constructing the derived category $\D(\Jcal)$ of a GC manifold $(M, \Jcal)$ and carry it out in the case of tori. 

The desiderata of such a category are as follows. (We refer to Section \ref{sec:GC} for the relevant background on GC geometry):
\begin{enumerate}
    \item If $X = (M,J)$ is a \emph{complex manifold} and $\mathcal J_J$ the associated generalized complex structure then $\D(\Jcal_J)$ should be equivalent to $\Dbcoh(X)$, the bounded derived category of complexes of sheaves with coherent cohomology on $X$.
    \item If $(M, \omega)$ is a \emph{symplectic manifold} and $\Jcal_\omega$ the associated generalized complex structure then $\D(\Jcal_\omega)$ should be equivalent to (some version of) the Fukaya category.
    \item If $(M, \Jcal)$ and $(\tilde M, \tilde \Jcal)$ are integrally \emph{T-dual} their derived categories should be equivalent.  
    In particular, in view of \cite{Strominger}, if $(M, J)$ and $(\check M, \omega)$ are \emph{mirror} then their derived categories should be equivalent. 
    \item \emph{Deformations} of generalized complex manifolds should induce deformations of derived categories. Starting with a complex manifold $X = (M, J)$ this should give an isomorphism on the deformation spaces which are both described in terms of \eqref{eq:defspaces} below.
    \item An \emph{integral $B$-transform} of a generalized complex structure
    should not change the derived category as it is expected to act as a gauge transformation, see \cite[Section 3.1]{kapustin2005branes} and \cite[Section 3.8]{Gualtieri:2003dx}.
\end{enumerate}
T-duality is physically described as an equivalence between quantum field theories (\cite{Bus87}), and mathematically as a duality between manifolds admitting free torus actions \cite{BEM04}. It can be applied very effectively in the setting of generalized geometry, where it can be used to encompass both Fourier--Mukai duality between complex manifolds, as well as the SYZ type mirror symmetry dualities between A and B models (\cite{Strominger}).

Gualtieri shows that the space of first-order deformations of a complex structure viewed as a GC structure is described by 
\begin{equation}\label{eq:defspaces}
    H^2(X, \mathcal O) \oplus H^1(X, \mathcal T_X)\oplus H^0(X, \Wedge^2 \mathcal T_X).
\end{equation}
This space is considered as the ``extended deformation space'' by Barannikov and Kontsevich in \cite{barannikov1998frobenius}; GC geometry provides a clear geometric interpretation of this space. The component $H^1(X, \mathcal T_X)$ provides deformations as a complex manifold, as studied by Kodaira. The component $H^2(X, \mathcal O)$ describes deformations coming from \emph{B-field/gerby transformations}:
Given a $B$-field $B \in \Omega^2(X)$ and a GC structure $\Jcal$, the $B$-transformed GC structure is given by $e^{B}\Jcal e^{-B}$. 
The component $H^0(X, \Wedge^2 \mathcal T_X)$ describes \emph{non-commutative deformations}:
Given a bivector $\beta \in \mathfrak{X}^2(M)$ and a GC structure $\Jcal$, the $\beta$-transformed GC structure is given by $e^{\beta}\Jcal e^{-\beta}$.
The deformation space \eqref{eq:defspaces} also controls first order deformations of the derived category $\Dbcoh(X)$, induced by deformations of the complex structure, twisting sheaves by a gerbe and non-commutative deformations of the structure sheaf, respectively.

In this paper we propose a definition of $\D(\Jcal)$ for translation invariant GC structures on the torus using deformation theory: We write a GC structure as a certain type of deformation of a purely complex structure and then perform an analogous deformation on the level of derived categories.

The starting point is the following theorem by Block, expressing the derived category of a compact complex manifold in terms of its Dolbeault algebra:
\begin{theorem*}[4.1.3 in \cite{BlockNC1}]
Let $X$ be a compact complex manifold, and consider its Dolbeault algebra $\mathbf{A} = (\Omega^{0,\bullet}(X),\bar{\partial},0)$. Then the homotopy category of cohesive modules $\Pcal_{\mathbf{A}}$ is equivalent to $\Dbcoh(X)$. 
\end{theorem*}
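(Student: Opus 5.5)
The plan is to build a functor $\Pcal_{\mathbf{A}} \to \Dbcoh(X)$ and prove that it is fully faithful and essentially surjective. Recall that a cohesive module is a finitely generated projective graded $\Omega^{0,0}(X)$-module $E^\bullet$ with a $\bar\partial$-superconnection $\mathbb{E} = \mathbb{E}_0 + \mathbb{E}_1 + \mathbb{E}_2 + \cdots$ of total degree one satisfying $\mathbb{E}^2 = 0$. By Serre--Swan, $E^\bullet$ is a graded $C^\infty$ vector bundle; $\mathbb{E}_0$ is an $\Omega^{0,0}$-linear differential, and $\mathbb{E}_1$ is a $\bar\partial$-operator. To $E$ we attach the complex of sheaves $\mathcal{F}(E)$ with $U \mapsto (\Omega^{0,\bullet}(U)\otimes_{\mathbf{A}^0} E^\bullet, \mathbb{E})$. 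Morphisms of cohesive modules, i.e.\ degree-zero closed elements of the dg Hom complex, induce morphisms of these complexes, so we obtain a dg functor and hence a functor on homotopy categories.

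\textbf{Step 1: coherent cohomology.} We first show that $\mathcal{F}(E)$ is a complex of $\mathcal{O}_X$-modules whose cohomology sheaves are coherent. The argument is local. Filter by form degree: the associated graded is $(\Omega^{0,\bullet}\otimes E, \mathbb{E}_0)$, and the $E_1$-differential comes from $\mathbb{E}_1$. On a small polydisc one uses a homotopy-perturbation argument. By the $\bar\partial$-Poincaré lemma, $\Omega^{0,\bullet}(U)$ is a fine resolution of $\mathcal{O}(U)$. Flatness of $\mathbb{E}_1^2 = -[\mathbb{E}_0,\mathbb{E}_2]$ up to homotopy, together with the gauge/perturbation lemma, allows a local gauge transformation making $\mathbb{E}$ into a bounded complex of holomorphic vector bundles, i.e.\ $\mathbb{E}_{\ge 2} = 0$ locally. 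Hence $\mathcal{F}(E)$ is locally quasi-isomorphic to a bounded complex of free $\mathcal{O}$-modules, and in particular it is perfect with coherent cohomology.

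\textbf{Step 2: full faithfulness.} Since both sides are defined via fine sheaves, the dg Hom complex $\mathcal{P}_{\mathbf{A}}(E,F)$ is the global sections of the fine sheaf complex $\mathcal{H}om(E,F)$ built from $\Omega^{0,\bullet}\otimes \Hom(E,F)$ with the induced superconnection. This complex computes $R\Gamma(X, R\mathcal{H}om(\mathcal{F}(E),\mathcal{F}(F)))$. The local comparison in Step 1 identifies the sheaf $\mathcal{H}om$ with the derived $\mathcal{H}om$ of the local holomorphic complexes. Fineness then identifies global sections with hypercohomology, which is $\Hom$ in $\Dbcoh(X)$. Taking $H^0$ gives full faithfulness.

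\textbf{Step 3: essential surjectivity.} This is the main obstacle, because on a compact complex manifold that is not projective there may not be any global resolutions by holomorphic vector bundles. Our first approach is to prove the statement by induction on the number of local charts. Given $\mathcal{C}\in\Dbcoh(X)$, we first note that, locally on a Stein polydisc, coherent sheaves admit finite free resolutions (Hilbert syzygy theorem and Cartan's Theorem A/B). Next, we regard these as $C^\infty$ bundles, which are trivial on contractible sets. The local $C^\infty$ resolutions are then glued via partitions of unity into a global graded smooth bundle. Specifically, on an open set covered by $U$ and $V$, we have a cohesive module on each piece and a quasi-isomorphism on the overlap. We form a homotopy fibre product (cone construction), which gives a cohesive module whose $\mathbb{E}_0$ may fail to be a complex of bundles of constant rank. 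We fix this by adding contractible pieces so that $\mathbb{E}_0$ becomes a genuine differential on a graded bundle with a $\bar\partial$-superconnection, correcting the higher components $\mathbb{E}_k$ order by order by solving $\mathbb{E}^2 = 0$ using the local acyclicity established in Step 1. The hard part is exactly this step: making the gluing, with the smooth extension of the $C^\infty$ quasi-isomorphisms and the obstruction classes vanishing by fineness, produce a finitely generated projective module over all of $X$. Compactness of $X$ ensures that finitely many steps suffice.
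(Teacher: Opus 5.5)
The paper does not prove this statement; it quotes it from \cite{BlockNC1}, so your sketch has to stand on its own. Steps 1 and 2 have the right shape: sheafify, compare locally with holomorphic complexes, and use fineness to identify global sections with hypercohomology. However, the local normal form you invoke in Step 1 is a genuine theorem, a superconnection version of Koszul--Malgrange integrability. It says that near every point a $\bar\partial$-superconnection is gauge equivalent to a bounded complex of holomorphic bundles. Your justification does not establish it. $\mathbb{E}_1$ is not integrable on the individual $E^i$, since $\mathbb{E}_1^2=-[\mathbb{E}_0,\mathbb{E}_2]\neq 0$. The perturbation lemma transfers differentials along a given contraction; it does not produce a gauge transformation killing $\mathbb{E}_{\geq 2}$.

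The real gap is Step 3, which you yourself leave unresolved. The mechanism you propose does not produce a cohesive module, for three reasons.
\begin{enumerate}
\item A homotopy fibre product of modules over $\Omega^{0,\bullet}(U)$ and $\Omega^{0,\bullet}(V)$, glued along $\Omega^{0,\bullet}(U\cap V)$, is a module over $\Omega^{0,\bullet}(U\cup V)$. Its underlying graded module is not finitely generated projective over $C^\infty(U\cup V)$. Repairing this amounts to a descent theorem for cohesive modules, which is substantial and not supplied.
\item Local free resolutions of different ranks, identified on overlaps only up to quasi-isomorphism, cannot simply be patched with partitions of unity.
\item The non-constant rank of $\mathbb{E}_0$ is a red herring: cohesiveness never requires constant rank.
\end{enumerate}
Moreover, the order-by-order construction of $\mathbb{E}_1,\mathbb{E}_2,\dots$ is governed by \emph{global} obstructions. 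For example, a smooth line bundle whose first Chern class has nonzero image in $H^2(X,\Ocal_X)$, placed in a single degree, admits no superconnection with $\mathbb{E}^2=0$. So local acyclicity and fineness cannot make the obstructions vanish; you need a global target to measure them against.

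The missing idea is the decoupling in Block's argument, which has two stages.
\begin{enumerate}
\item By Malgrange's theorem, $C^\infty_X$ is flat over $\Ocal_X$. Hence $\mathcal{C}\otimes_{\Ocal_X}\Omega^{0,\bullet}_X$ is a fine resolution of $\mathcal{C}$, and $\mathcal{C}\otimes_{\Ocal_X}C^\infty_X$ is a perfect complex over the soft sheaf $C^\infty_X$. Since $X$ is compact, Illusie's global resolution theorem (SGA~6, Exp.~II) gives a bounded complex of smooth bundles $(E^\bullet,\mathbb{E}_0)$ with a quasi-isomorphism $e_0$ to $\Gamma(X,\mathcal{C}\otimes_{\Ocal_X}C^\infty_X)$. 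This is the only gluing needed, and it happens for $C^\infty$-modules, where partitions of unity are available.
\item One then constructs $\mathbb{E}_1,\mathbb{E}_2,\dots$ together with components $e_1,e_2,\dots$ of a morphism $(\Omega^{0,\bullet}(X)\otimes E,\mathbb{E})\to\Gamma(X,\mathcal{C}\otimes_{\Ocal_X}\Omega^{0,\bullet}_X)$, by induction on form degree. At each stage the obstruction is a cocycle in the Hom complex from the bounded projective complex $(E^\bullet,\mathbb{E}_0)$ into the cone of $e_0\otimes\Omega^{0,k}$. That cone is acyclic, so the obstruction can be killed.
\end{enumerate}
A filtration argument then shows that the resulting cohesive module maps quasi-isomorphically to a resolution of $\mathcal{C}$, which gives essential surjectivity.
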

Here $\Pcal_{\mathbf{A}}$ denotes the dg-category of \emph{cohesive modules} over the Dolbeault algebra: those modules whose underlying graded modules are bounded, finitely generated and projective. Cohesive modules give a refinement of the classical derived category of a dg algebra. They will play an important role in this paper and we refer to Section \ref{sec:cohmod} for the relevant background. 

This set-up allows us to deform the derived category by deforming the Dolbeault algebra. 
Consider first the case of a $B$-field transformation of a complex structure.
Given a complex manifold $(X,I)$ and 2-form $B$ we aim to construct the derived category of the GC structure $e^{B}\Jcal_I e^{-B}$.
We take the $(0,2)$-part of $B$ with respect to the given complex structure $I$ and use it to deform the Dolbeault algebra to a \emph{curved differential graded algebra} 
\begin{equation*}
    \mathbf{A}(I,0,B) = (\Omega^{0,\bullet}(X),\bar{\partial},2\pi\ii B^{0,2}).
\end{equation*}
The differential $d$ of a curved differential graded algebra no longer needs to square to zero. Rather, there is a prescribed curvature element $h$ in degree two such that $d^2 = [h,-]$.
Note that for $\mathbf{A}(I,0,B)$ the differential $\bar\partial$ does satisfy  $\bar{\partial}^2 =0 = [2\pi\ii B^{0,2},-]$ as the wedge-product is graded commutative. 
Even in the commutative case $B^{0,2}$ changes the notion of module over an algebra. 
In this case: 
\begin{theorem*}[{\cite{BlockNC1}, \cite{Antweiler24}}]
$\Ho \Pcal_{\mathbf{A}(I,0,B)}$ is equivalent to the bounded derived category of complexes of \emph{$B$-twisted sheaves} of $\Ocal_X$-modules with coherent cohomology.
\end{theorem*}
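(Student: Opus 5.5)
We would follow the strategy of Block's proof of Theorem 4.1.3, adapted to the twisted setting. First, $B^{0,2}$ defines a twisting class. The $(0,2)$-part satisfies $\bar\partial B^{0,2}$ of type $(0,3)$, and for integrability of the $B$-transform we may assume $dB=0$, so $\bar\partial B^{0,2}=0$. Hence $B^{0,2}$ defines a class in $H^2(X,\Ocal_X)$ via the Dolbeault isomorphism. Exponentiating, $\exp(2\pi\ii\,\cdot)$ yields an element of $H^2(X,\Ocal_X^*)$, i.e.\ a gerbe $\alpha$. On a good cover $\{U_i\}$ we write $B^{0,2}|_{U_i}=\bar\partial\theta_i$ with $\theta_i\in\Omega^{0,1}(U_i)$. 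The differences $\theta_i-\theta_j$ are $\bar\partial$-closed, so $\theta_i-\theta_j=\bar\partial f_{ij}$, and then $\alpha_{ijk}=e^{2\pi\ii(f_{ij}+f_{jk}+f_{ki})}$ is a holomorphic $\check{\text{C}}$ech $2$-cocycle.

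Second, we construct a functor from cohesive modules to $\alpha$-twisted complexes. A cohesive module $(E^\bullet,\mathbb E)$ over $\mathbf A(I,0,B)$ is a $\ZZ$-connection with $\mathbb E^2+2\pi\ii B^{0,2}=0$. On $U_i$ we gauge by $\theta_i$, setting $\mathbb E_i=\mathbb E+2\pi\ii\theta_i$. This gives a flat $\ZZ$-connection over the untwisted Dolbeault algebra of $U_i$, because the curvature correction $\bar\partial\theta_i$ cancels $B^{0,2}$ and the $\theta_i$ are odd scalars. Block's local argument (the $\bar\partial$-Poincar\'e lemma plus the Koszul–Malgrange type integrability) then shows that $\ker\mathbb E_i^{0}$ on the sheaf level yields a complex of $\Ocal_{U_i}$-modules with coherent cohomology. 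On overlaps, multiplication by $e^{2\pi\ii f_{ij}}$ intertwines $\mathbb E_i$ and $\mathbb E_j$. These gluings satisfy the cocycle condition up to $\alpha_{ijk}$, so we obtain a complex of $\alpha$-twisted sheaves. The construction is functorial in morphisms of $\Pcal_{\mathbf A(I,0,B)}$ and sends homotopies to homotopies, so it descends to $\Ho\Pcal_{\mathbf A(I,0,B)}\to \D^b_{coh}(X,\alpha)$.

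Third, we prove full faithfulness and essential surjectivity. Full faithfulness reduces, as in Block, to comparing the Hom complex of cohesive modules with the twisted-sheaf $\mathrm{RHom}$ computed by the fine twisted Dolbeault resolution. The twisted Dolbeault resolution is acyclic because the sheaves of smooth sections are fine, and fineness is unaffected by the twist. For essential surjectivity, we take a bounded complex of $\alpha$-twisted sheaves with coherent cohomology and tensor with the twisted $\mathcal C^\infty$ line data $e^{2\pi\ii f_{ij}}$, which is smoothly trivializable after a smooth gauge. We then run Block's inductive construction of a cohesive module: locally we resolve by finite free twisted holomorphic bundles and glue using the homotopy-perturbation argument of 4.1.3.

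The main obstacle is essential surjectivity. Block's global argument uses that the smooth sheaf $\Ocal^\infty$ is flat over $\Ocal$ and that coherent complexes have global smooth finite projective resolutions. In the twisted setting we must check that the twisted smooth resolution glues to a globally defined bounded, finitely generated projective graded $\Omega^{0,\bullet}$-module. We would first try to reduce to the untwisted case by noting that $\alpha$ is smoothly trivial, since its class lies in the image of $H^2(X,\Ocal)$ and becomes trivial in $H^2(X,\mathcal C^\infty{}^*)$ after modifying by $\theta_i$. This identifies twisted smooth modules with untwisted ones and lets us transport Block's argument verbatim, with curvature $2\pi\ii B^{0,2}$ replacing $0$.
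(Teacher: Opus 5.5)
Your proposal is correct and is essentially the argument the paper relies on: the paper only cites Block and Antweiler for this statement, but its proof sketch of the first-order analogue in Section 5.2 uses exactly your data. There one has local $(0,1)$-forms $\omega_i$ with $\bar\partial\omega_i=B^{0,2}$, smooth $f_{ij}$ with $\bar\partial f=\check d\omega$, and gauged local modules $(\underline P|_{U_i},\,d-2\pi\ii\epsilon\,\omega_i)$ glued by $1+2\pi\ii\epsilon f_{ij}$ (the first-order form of your $e^{2\pi\ii f_{ij}}$); the remaining verifications are deferred to Block's techniques, as you do.

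Two cosmetic fixes. With the paper's convention $d_P^2=2\pi\ii B^{0,2}$, the gauge term enters with a minus sign. Also, the local object to take is the whole sheafified complex $(\underline E|_{U_i},\mathbb E_i)$, which Block's local lemma shows is quasi-isomorphic to a bounded complex of holomorphic bundles, rather than $\ker\mathbb E_i^0$.
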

We view the derived category of twisted sheaves as the derived category of $e^{B}\Jcal_I e^{-B}$.

In this paper we generalize this approach. Corresponding to $\beta$-transforms in GC geometry are non-commutative deformations of derived categories.
These deformations are much harder to study than derived categories of twisted sheaves and while non-commutative deformations always exist formally we are interested in strict (or geometric) deformations. 

Hence we restrict our study in this paper to tori, whose Dolbeault algebras have a geometric theory of noncommutative deformations.
Given a translation-invariant GC structure on the torus $V/\Gamma$ which has \emph{even cotype} (a parity condition which can always be achieved by performing T-duality, see Definition \ref{def:cotype}), we may express it as a $B$-transform followed by a $\beta$-transform of a complex structure $I$:
\begin{namedtheorem}[Proposition \ref{proposition:B_then_beta_eventype}]
    Let $\Jcal$ be a translation invariant GC structure on $V$ of even cotype. 
        There exist $B\in\Wedge^2 V^*, \beta\in \Wedge^2 V$ and $J\in\mathcal C_V$, such that 
        \[\Jcal = \betatrans{\beta }\btrans{B }\Jcal_J\btrans{-B }\betatrans{-\beta }.\] 
\end{namedtheorem}
We associate to this data a \emph{generalized Dolbeault algebra} which will be a curved noncommutative deformation of the Dolbeault algebra defined by
$$\mathbf{A}(I, \beta, B) = (C^\infty(V/\Gamma) \otimes \Wedge^\bullet (V_{\mathbb C}^*)^{0,1}, \star^\beta, \bar \partial_{I}, 2 \pi \ii B^{0,2}).$$ 

Here we use $\beta$ to deform the commutative product on smooth functions on the torus, tensor with an exterior algebra over $\bar V$, take the usual Dolbeault differential induced by $I$ and consider $2\pi \ii B^{0,2}$ as a curvature term. As the curvature is central the differential still squares to zero.

The algebra $\mathbf{A}(I, \beta, B)$ depends on the choices we have made, but it leads to a well-defined derived category:

\begin{nameddefinition}[Definition \ref{definition:gencategory}]
Let $\Jcal$ be a translation-invariant GC structure on $V/\Gamma$ of even cotype. Choose $I$, $\beta$ and $B$ such that 
$\Jcal =\betatrans{\beta }\btrans{B } \Jcal_I\btrans{-B}\betatrans{-\beta}$.
The \emph{derived category} of $\Jcal$ is defined as 
\[ \Dcal(\Jcal) = \Pcal_{\mathbf{A}(I,\beta,B)}. \] 
If $\Jcal$ is of odd cotype we let $\Jcal'$ be the GC structure obtained by performing T-duality relative to any circle.
We define $\D(\Jcal) := \Dcal(\Jcal')$.
\end{nameddefinition}

We then have our main result:
\begin{namedtheorem}[{Theorem \ref{theorem:welldefined}}]
Let $\mathcal J$ be a translation-invariant GC structure of even cotype on a torus. Then the generalized derived category    $\D(\Jcal)$ is well-defined up to $A_\infty$-quasi-equivalence.
\end{namedtheorem}
We may extend the definition to the odd case and obtain a key invariance result: 
\begin{namedtheorem}[{Corollary \ref{corollary:welldefinedandinvariant}}]
The generalized derived category of a $2n$-dimensional torus is well-defined also for translation invariant GC structures of odd cotype and satisfies  \emph{integral invariance}, i.e.\ it is invariant under the natural action of $SO(2n,2n; \Z)$ on GC structures.
\end{namedtheorem}

This satisfies our list of desiderata (restricting to affine T-duality and assuming homological mirror symmetry): 
\begin{enumerate}
    \item If $\Jcal$ is a purely complex structure this reduces to Block's theorem \cite{BlockNC1}.
    \item For a symplectic torus we obtain the convergent Fukaya category assuming  homological mirror symmetry, see Corollary \ref{cor:fukaya}. 
    Note that for generic symplectic structures this depends on a non-commutative homological mirror symmetry conjecture that we formulate below.
    \item Invariance under affine integral T-duality is a special case of integral invariance, see Lemma \ref{lem:Tdualintegral}. Mirror symmetry appears as a special case of affine integral T-duality, see Corollary \ref{cor:integralconsequences}.
    \item We identify deformation spaces of a complex structure in terms of GC structures, curved noncommutative Dolbeault algebras and derived categories in  Section \ref{sect:deformations}.
    \item Invariance under integral $B$-transforms is another special case of integral invariance, see Corollary \ref{cor:integralconsequences}.
\end{enumerate}

We elaborate on the relation to symplectic topology and mirror symmetry.

Given a translation invariant GC structure $\Jcal$ on the torus we define its mirror $\check \Jcal$ and prove in Theorem \ref{thm:nchms} that their derived categories agree.
The mirror itself depends on choices but its derived category does not.
In the special case that $\Jcal$ comes from a $B$-symplectic structure admitting the structure of Lagrangian fibration its mirror is exactly the GC structure associated to the complex torus that is mirror to the symplectic torus.

Thus Kontsevich's homological mirror symmetry conjecture for the torus is equivalent to the fact that our generalized derived category agrees with the Fukaya category.
Note that we consider here the convergent Fukaya category over $\CC$, not the more general construction over the Novikov ring, see Remark \ref{rem:novikov}.

It has been stipulated since \cite{Fukaya02} that the mirror of a generic translation invariant $B$-symplectic torus should be a non-commutative torus.
We can make precise the noncommutative version of the homological mirror symmetry conjecture:
\begin{namedtheorem}[Conjecture \ref{conj:nchms}]
    Given a torus with translation invariant $B$-symplectic form $\omega + \ii B$ and associated GC structure $\Jcal_{\omega+\ii B}= \btrans{B}\Jcal_\omega\btrans{-B}$ its Fukaya category is given by the derived category of the mirror of $\Jcal_{\omega + \ii B}$.  
\end{namedtheorem}
If $X$ admits an affine Lagrangian its mirror may be chosen to be a noncommutative torus or a gerby torus, see Corollary \ref{cor:nchms}.

We also exhibit explicit noncommutative curved modules that correspond to affine Lagrangians and verify that their derived hom spaces agree with Floer cohomology in agreement with the computations in \cite{Fukaya02}.

Given any symplectic torus $(V/\Gamma, \omega)$ we may choose a Hyperkähler triple $g,I,J,K$ with $\omega_J = \omega$. 
We find
$$\D(\Jcal_\omega) = \mathcal P_{\mathbf A(I,-\omega_K^{-1}, \omega_K / 2)}.$$
Let $(L,\alpha), (\tilde L,\tilde \alpha)$ be two affine Lagrangian submanifolds
of $V/\Gamma$ with local systems $\alpha$ and $\tilde \alpha$. 
To the Lagrangians we associate cohesive modules $\tilde N(L,\alpha)$ and $\tilde N(\tilde L, \tilde \alpha)$ over $\mathbf A(I,-\omega_K^{-1}, \omega_K / 2)$ and prove:
\begin{namedtheorem}[Theorem \ref{thm:FloerCohomology}]
    With notation as above there is an isomorphism:
    \[ H^\bullet(\Hom^\bullet_{\mathbf A(I,-\omega_K^{-1},\omega_K/2)}(\tilde N(L,\alpha),\tilde N(\tilde L, \tilde \alpha))) \cong HF^\bullet( ( L,\alpha), (\tilde L, \tilde \alpha) ).
    \]
\end{namedtheorem}

In the special case that a symplectic torus admits a space filling brane we construct a certain module corresponding to the space filling brane, see Example \ref{eg:sfb}. 
We verify its derived endomorphisms correspond to a non-commutative deformation of the Dolbeault algebra, as predicted in the mathematical physics literature, e.g.\  \cite[Section 11.1]{kapustin2007electric}.

We conclude the introduction with an indication of the proof of well-definedness (Theorem \ref{theorem:welldefined}).
To compare cohesive modules over different generalized Dolbeault algebras we have to construct suitable bimodules inducing an equivalence of categories.
The key insight is that the existence of such bimodules is closely tied to T-duality relative to a point, whose algebraic incarnation is the non-commutative Fourier-Mukai transform constructed in \cite{NCGrauert}.

Indeed if  $V^*/\Gamma^*$ is the dual of $V/\Gamma$ we give a precise condition under which the GC structures
\[\Jcal = \betatrans{\beta }\btrans{B }\Jcal_I\btrans{-B}\betatrans{-\beta}\quad \text{ and }\quad  
\tilde{\Jcal} = \betatrans{\tilde{\beta }}\btrans{\tilde{B }}\Jcal_{\tilde{I}}\btrans{-\tB}\betatrans{-\tbeta}\] 
on $V/\Gamma$ and $V^*/\Gamma^*$ are identified  by T-duality (Lemma \ref{lemma:Tdualflip}), and this is the same condition ensuring that 
a bimodule exists identifying $\Pcal_{\mathbf{A}(I,\beta,B)}$ and $\Pcal_{\mathbf{A}(\tilde I,\tilde\beta,\tilde{B})}$  (Theorem \ref{theorem:inducedequivalences} which is proven in \cite{NCGrauert}). 
Thus, we may compare the derived categories associated to two different representations of the same GC structure by comparing them to the derived categories of their T-duals. 

\subsection{Relation to other work}
This paper is closely related to \cite{NCGrauert} where the general form of non-commutative Fourier Mukai duality, a key ingredient to this paper, is proven.
This work in turn builds on \cite{BlockNC1}, note also the formal version \cite{ben2007non}.

The most substantial progress on categories of GC branes in the literature up to this point is obtained by  Bischoff-Gualteri in \cite{BischoffGualtieri2022}: Their approach to the non-commutative deformations is to integrate the corresponding Poisson manifold to a symplectic groupoid. This reduces the problem to defining the category of GC branes to another hard open problem in symplectic geometry: finding an extension of the Fukaya category which also allows for coisotropic submanifolds as objects. The paper then carries out this construction in detail for toric Poisson structures.

In the mathematical physics literature a description of $A$-branes in terms of non-commutative geometry has been suggested by Kapustin \cite{kapustin2004topological, kapustin2005branes} and we make his idea precise in the case of the torus.

There has since been work on using generalized complex geometry to consider mirror symmetry for non-commutative tori, e.g.\ \cite{kajiura2008noncommutative, kim2009mirror, kobayashi2021remarks}.
For example, in \cite{kajiura2008noncommutative} Kajiura considers noncommutative Dolbeault algebras with a view to homological mirror symmetry and speculates in the concluding remarks that their two-sided twisted complexes (roughly cohesive modules) would form a category of interest, but this has not been made precise.

Finally, there has been significant work on understanding the Fukaya category in terms of deformation quantization, e.g.\ \cite{bressler2002mirror, nest2004remarks, tsygan2013microlocal, kontsevich2024holomorphicfloertheoryi}. 
We will not explore the precise relations to our work here but our result relating derived hom spaces of cohesive modules and Floer cohomology can be viewed as an instance of this program.

\subsection{Organization of the paper}
In Section \ref{sec:GC} we recall the required background on generalized complex geometry, and further obtain the description of invariant GC structures on the torus as a $B$ and $\beta$-transformation of a complex structure in Proposition \ref{proposition:B_then_beta_eventype}. 
In Section \ref{sec:Tdual} we recall the basics of T-duality and its interpretation within generalized geometry. 
In Section \ref{sec:cohmod} we recall the definition of curved differential graded algebras and the cohesive modules over them.
In Section \ref{sect:deformations} we explain how to deform the Dolbeault algebra of a complex  torus  in curved and noncommutative directions to obtain the generalized Dolbeault algebra $\mathbf{A}(I,\beta,B)$.
We also compare the deformation theory of the complex structure as a GC structure, the deformation theory of its Dolbeault algebra and of its derived category. 
In Section \ref{section:Heisenberg} we introduce the Heisenberg modules which will provide the appropriate Morita equivalences between different Generalized Dolbeault algebras and show the relation between T-duality and the existence of non-commutative Poincar\'e line bundles. 
In Section \ref{sec:gencat} we finally define the derived category of a generalized complex torus, and show that it is well-defined and satisfies integral invariance. 
In Section \ref{sect:fukaya} we discuss homological mirror symmetry and the relationship of our generalized derived category and the Fukaya category. 
We also construct cohesive modules associated to affine Lagrangians and compute their hom spaces to compare them with Floer cohomology.

\subsection{Conventions}
Throughout $V$ is an $\RR$-vector space.
The linear dual space of  $V$ is denoted by $V^*$.
Lattices will be denoted by $\Gamma \subset V$ and the dual lattice $\Gamma^* \subset V^*$ is defined as \[\Gamma^* := \{ \alpha \in V^* | \alpha(\Gamma) \subset \mathbb Z \}.\]

Functions will usually be considered with values in $\CC$ unless specified otherwise. 

Generalized complex structures will by denoted by $\Jcal$, and their $+\ii$-eigenbundles as $L_{\Jcal}$.

Given a two form $B \in \Wedge^2 V^*$ on $V$, we also denote by $B  \colon V \to V^*$ the linear map $B (v) := B(v,-)$. Similarly, a bivector $\beta \in \Wedge^2 V$ gives rise to $\beta  \colon V^* \to (V^*)^* = V$, $\beta ( \alpha ) := \beta(\alpha,-)$.

We use cohomological grading convention and complexes may be denoted as in $P^\bullet$.

We denote the $n\times n$ identity matrix by $\id_n$ (or just $\id$).

\subsection{Statement on AI use}
ChatGPT 5.6 was used for help with algebra in the proofs of Lemma \ref{lemma:pert} and Theorem \ref{thm:zinvariance}.
ChatGPT 6 and Claude Opus 5.5 were used to help compare the Hyperkähler Maslov index we introduce in Definition \ref{Def:HK2} to the Maslov index from \cite{Fukaya02}.

\subsection{Acknowledgements}
We are grateful to Jonathan Block for freely sharing his ideas on the approach we take in this paper.
We also thank Marco Gualtieri for several insightful discussions and Francis Bischoff, Ailsa Keating, Tobias Dyckerhoff, Janko Latschev, Tony Pantev, Nick Sheridan, Jörg Teschner for useful conversations.

All authors are supported by the Deutsche Forschungsgemeinschaft (DFG, German Research Foundation) -- SFB 1624 -- ``Higher structures, moduli spaces and integrability'' -- 506632645. 
The second and third author are also supported by the DFG through EXC 2121 ``Quantum Universe'' -- project number 390833306

\section{Review of generalized complex geometry}\label{sec:GC}

In this section we set up our notation and review a number of standard facts from the theory of GC geometry. In \S 2.1. we give the relevant general definitions in the context of an arbitrary smooth base manifold. In \S 2.2. we focus on the linear GC structures on a real vector space $V$. This case will be particularly relevant for our constructions in later sections, where we will work with translation invariant GC structures on tori. In particular we study certain symmetries associated to a $2$-form $B\in\Wedge^2V^*$ (so-called $B$-transforms) and a bivector $\beta\in\Wedge^2V$ (so-called $\beta$-transforms). 
The main technical result is Proposition \ref{proposition:B_then_beta_eventype} which allows us to write any linear GC structure of even cotype on $V$ as a $B$-transform followed by a $\beta$-transform applied to an (ordinary) complex structure on $V$.

\subsection{The generalized tangent bundle}

\begin{definition}
    Let $M$ be a smooth manifold. A \emph{Courant algebroid} over $M$ is a vector bundle $E\to M$ together with a non-degenerate scalar product $\langle\cdot,\cdot\rangle\in\Gamma(\Sym^2E^*)$, a bilinear bracket $[\cdot,\cdot]\colon \Gamma(E)\times \Gamma(E)\to\Gamma(E)$ and a bundle map $\rho\colon E\to TM$, called the \emph{anchor}, such that for all $u,v,w\in\Gamma(E)$ the following holds:
    \begin{itemize}
        \item $[u,[v,w]]  = [[u,v],w]+[v,[u,w]]$,
        \item $\rho(u)\langle v,w\rangle = \langle [u,v],w\rangle+\langle v,[u,w]\rangle$,
        \item $\langle [u,v]+[v,u],w\rangle = \rho(w)\langle u,v\rangle$. \hfill \qedhere
    \end{itemize}
\end{definition}

For us the most important example of a Courant algebroid  is the following:
\begin{definition}
The \emph{generalized tangent bundle} is the Courant algebroid with vector bundle
    \[
        E = \TT M = TM\oplus T^*M
    \]
with anchor given by the natural projection $\rho \colon\mathbb{T}M \rightarrow TM$, scalar product given by the natural pairing 
    \[
        \langle v+\xi, v+\xi\rangle = \xi(v)
    \]
and bracket given by the \emph{Dorfman bracket}
    \[ [X+\xi, Y+\eta] = \mathcal L_X(Y+\eta)-\iota_Yd\xi, \]
where $X,Y\in \Gamma(TM), \xi,\eta\in\Gamma(T^*M) = \Omega^1(M)$.
\end{definition}
Like the usual Lie bracket on vector fields, the Dorfman bracket is invariant under the natural action of the group $\mathrm{Diff}(M)$ on $\Gamma(\mathbb{T}M)$. An important new feature of the generalized tangent bundle is the presence of additional symmetries given by \emph{$B$-field transformations}. Let $B\in\Omega^2(M)$ be a closed $2$-form with associated homomorphism $B \colon TM\to T^*M$. This gives rise to the  automorphism
    \[
        \btrans{B } = \begin{pmatrix}
            1 & 0 \\ B & 1
        \end{pmatrix}\in\Gamma(\End(TM\oplus T^*M))
    \]
of the generalized tangent bundle. 
Any automorphism of $(\TT M, \langle\cdot,\cdot\rangle, [\cdot,\cdot],\rho)$ is given by a combination of a $B$-field transformation and a diffeomorphism. 

We are interested in the study of GC structures, which we define next. 

\begin{definition}
    Let $M$ be a smooth manifold. A \emph{generalized complex (GC) structure} on $M$ is an element $\Jcal\in \Gamma(\End(\TT  M))$ such that 
    \begin{enumerate}
        \item $\Jcal^2 = -\id_{\TT M}$,
        \item $\Jcal$ is skew-symmetric with respect to $\langle\cdot,\cdot\rangle$,
        \item $\Jcal$ is integrable in the sense that the $+\ii$-eigenbundle $L_{\Jcal} = \mathrm{Eig}(\Jcal,\ii) \subset \TT M _\CC$ is involutive, i.e. 
        \[
        [\Gamma(L_{\Jcal}),\Gamma(L_{\Jcal})]\subset \Gamma(L_{\Jcal}). \hfill \qedhere
        \]
    \end{enumerate}
\end{definition}

A fundamental fact is that a GC structure $\Jcal$ is characterized by its associated eigenbundle $L_{\Jcal}\subset \TT M_{\CC}$ which is an isotropic  subbundle and satisfies $\TT M_\C = L_{\Jcal}\oplus \overline{L_{\Jcal}}$. 

\begin{remark}\label{remark:signature}
If the manifold  $ M$ admits a generalized (almost) complex structure the dimension of $M$ must be even. 
\end{remark}

The general form of a GC structure is  
\[
\Jcal = \begin{pmatrix}
    A & \pi  \\ \omega  & -A^*
\end{pmatrix}
\]
with $A\in\Gamma(\End(TM)), \omega\in\Omega^2(M), \pi\in\Gamma(\Wedge^2TM)$ satisfying
\[
A^2+\pi\omega =-\id_{TM}, \quad \omega  A - A^*\omega  = 0 = A\pi  -\pi  A^*.  
\]

The integrability of $\Jcal$ may be translated into a set of differential equations satisfied by $A,\omega,\pi$, see \cite{Crainic2011}. 

The associated maximal isotropic subspace is then 
\[
L_{\Jcal} = (\id_{\mathbb TM}-\ii \Jcal)(\mathbb TM) \subset \mathbb TM_\CC.
\]

Since $B$-field transformations preserve the inner product and the Dorfman bracket, they act on the set of GC structures by conjugation: Given a closed $2$-form $B\in \Omega^2(M)$ and a GC structure $\Jcal$, we obtain a new GC structure 
    \[
        \btrans{B }\Jcal\btrans{-B }. 
    \]
In terms of corresponding maximal isotropic subbundles this action reads 
    \[
        L_{\btrans{B }\Jcal\btrans{-B }} = \btrans{B }L_{\Jcal}.
    \]

\begin{definition}\label{def:cotype}
    Let $\Jcal\in\Gamma(\End(\mathbb TM))$ be a GC structure. The \emph{type} of $\Jcal$ at a point $p\in M$ is the number 
    \[
        \mathrm{type}(\Jcal)_p = \codim_{TM_\cc} (\pi(L_{\Jcal}))\in \{0,1,\dots,\frac{1}{2}\dim_{\rr} M\}
    \]
    We say $\Jcal$ \emph{is of type $k\in \{0,\dots,\frac{1}{2}\dim_{\rr} M\}$} if $\mathrm{type}(\Jcal)_p = k$ for all $p\in M$.

    We also define the \emph{cotype} of $\mathcal J$ at $p$ as
    \[ \mathrm{cotype}(\Jcal)_p = \dim_{\mathbb C}(\pi(L_{\mathcal J})) - \frac{1}{2}\dim_{\rr} M = \frac{1}{2}\dim_{\rr} M - \mathrm{type}(\Jcal)_p \in \{0,1,\dots,\frac{1}{2}\dim_{\rr} M\} . \hfill \qedhere\]
\end{definition}

\begin{remark}
    The type of a GC structure is invariant under $B$-field transformations, since these are automorphisms of the Courant algebroid structure on $\TT M$. 
\end{remark}

\begin{example}\label{example:complexsymplecticGC}
    There are two important examples of GC structures of minimal, respectively maximal, possible type: 
    \begin{enumerate}
        \item Let $J\in \Gamma(\End(TM))$ be a complex structure on $M$ (in the ordinary sense). Then we obtain a GC structure via 
        \[
            \Jcal_J = \begin{pmatrix}
                J & 0 \\ 0 & -J^*
            \end{pmatrix}.
        \]
        The corresponding maximal isotropic subbundle of $\mathbb{T}M_\CC$ is given by 
        \[
            L_{\Jcal_J} = \left(\id_{\mathbb TM}-\ii\Jcal_J\right)(\mathbb TM) = TM^{1,0} + T^*M^{0,1}\subset \TT M_\CC.
        \]
        We have 
        \[
            \pi(L_{\Jcal_J}) = TM^{1,0},
        \] 
        which has codimension $n= \dim_{\CC}(M) = \frac{1}{2}\dim_{\RR}M$ in $TM_\C$. Thus, $\Jcal_J$ is of type $n = \dim_\CC M$ and hence of cotype $0$. One can show that any GC structure $\Jcal$ of this type is a $B$-field transformation of a complex structure, i.e. of the form 
        \[
            \Jcal = \btrans{B }\Jcal_J\btrans{-B }
        \]
        with corresponding isotropic subbundle 
        \[
            \btrans{B }L_{\Jcal_J}.
        \]
        \item Let $\omega\in\Omega^2(M)$ be a symplectic form, i.e. $d\omega =0$ and $\omega$ is non-degenerate. Then 
        \[
            \Jcal_\omega = \begin{pmatrix}
                0 & -\omega^{-1}\\ \omega  & 0 
            \end{pmatrix}
        \]     
        defines a GC structure with associated subbundle 
        \[
            L_{\Jcal_\omega} =\mathrm{graph}(-\ii\omega ) = \{X-\ii\omega (X)~:~ X\in TM_\CC\}\subset \TT M_\CC.
        \]
        We have 
        \[
        \pi(L_{\Jcal_\omega}) = TM_{\CC},
        \]
        thus $\Jcal_\omega$ is of type $0$ (and thus of cotype $n= \frac{1}{2}\dim_{\RR}M$). Any GC structure $\Jcal$ of type $0$  is a $B$-field transform of a symplectic structure $\Jcal_\omega$, i.e. is of the form 
        \[
            \Jcal = \btrans{B  }\Jcal_\omega\btrans{-B  }
        \]
        with corresponding maximal isotropic subbundle 
        \[
            L_{\Jcal} = \btrans{B } L_{\Jcal_\omega} = \mathrm{graph}(-\ii(\omega+\ii B)).
        \]
        Due to the last equality, we will sometimes use the notation $\Jcal_{\omega + \ii B} =  \btrans{B  }\Jcal_\omega\btrans{-B  }$.  \hfill \qedhere
    \end{enumerate}
    \end{example}

\subsection{The space of generalized complex structures on a vector space}
In this paper we are mainly concerned with translation-invariant GC structures on tori. So from now on our discussion will focus on the set of constant GC structures on a real vector space. 

Let $n\in \mathbb N$ and let  $V$ be a real $2n$-dimensional vector space.   Consider the set $\C_V$ of complex structures on $V$, i.e. 
\[
    \C_V = \{J\in\End(V)~:~ J^2 = -1\}.
\]
The group $\mathrm{GL}(V)\cong \mathrm{GL}(2n,\rr)$ acts transitively on $\C_V$ by conjugation. If we fix an element $J\in \C_V$, then the stabilizer of $J$ is given by $\mathrm{GL}(V,J) \cong \mathrm{GL}(n,\CC)$. 

Thus, we may view $\C_V$ as the homogeneous space
\[\C_V = \mathrm{GL}(V)/\mathrm{GL}(V,J)\cong \mathrm{GL}(2n,\RR)/\mathrm{GL}(n,\CC).\]
On the direct sum
\[\VV = V\oplus V^*\] 
the quadratic form 
\[\langle X+\xi,X+\xi\rangle = \iota_X\xi = \xi(X)\]
induces on $\VV$ an inner product of signature $(2n,2n)$. A GC structure on $V$ is an endomorphism $\Jcal \in\End(\VV)$ which is skew-symmetric with respect to $\langle\cdot,\cdot\rangle$ and satisfies $\Jcal^2 = -\id_\VV$. We consider the set of GC structures
\[
\mathcal{GC}_V = \{\Jcal\in\mathfrak{so}(\VV)~:~ \Jcal^2 = -\id_\VV\}.
\]
The orthogonal group $\mathrm{O}(\VV)$ acts transitively on $\mathcal{GC}_V$ by conjugation and contains $\mathrm{GL}(V)$ as a subgroup via the embedding
\[\mathrm{GL}(V)\to\mathrm{O}(\VV),\quad A\mapsto \begin{pmatrix}
    A & 0 \\ 0 & (A^{-1})^*
\end{pmatrix} \]
which restricts to an embedding
\[   \C_V\to\mathcal{GC}_V,\quad J\to \Jcal_J =\begin{pmatrix}
    J & 0 \\ 0 & - J^* \end{pmatrix}.
\]
Given $J\in\C_V$, we may take $\Jcal_J\in\mathcal{GC}_V$ as a base point, the stabilizer of which is 
\[
    \mathrm{U}(\VV,\Jcal_J) = \mathrm{O}(\VV)\cap \mathrm{GL}(\VV,\mathcal{J}_J) \cong\mathrm{U}(n,n).
\]
The set of all GC structures is thus given by the homogeneous space
\[
    \mathcal{GC}_V = \mathrm{O}(\VV)/\mathrm{U}(\VV,\mathcal{J}_J) \cong \mathrm{O}(2n,2n)/\mathrm{U}(n,n).
\]
and the map $\C_V\to\mathcal{GC}_V$ is $\mathrm{GL}(V)$-equivariant. 

Note that 
\[\mathfrak{so}(\VV) \cong  \Wedge^2(V\oplus V^*) = \Wedge^2V\oplus\Wedge^2V^* \oplus V\otimes V^*= \left\{\begin{pmatrix}
    A & \beta \\ B  & -A^*
\end{pmatrix}~:~ A\in\End(V), B\in \Wedge^2V^*, \beta\in\Wedge^2V\right\}.\]

\begin{remark}\label{remark:AdBbeta}
We now explicitly describe certain kinds of symmetry transformations that act on $\mathcal{GC}_V$. Let 
\[
    \mathcal{J} = \begin{pmatrix} A & \pi \\ \omega  & -A^* \end{pmatrix}\in \mathcal{GC}_V,
\]
where $A\in\End(V), \omega  \in \Wedge^2V^*, \pi \in\Wedge^2V$. 
\begin{enumerate}
    \item We have already encountered $B$-field transformations. To a $2$-form $B\in\Wedge^2V^*$ we associate the element 
    \[
    \btrans{B } = \begin{pmatrix}
        1 & 0 \\ B & 1
    \end{pmatrix}\in\mathrm{O}(\VV),
    \]
    which acts on $\Jcal$ by 
    \[
        \btrans{B }\Jcal \btrans{-B }= \begin{pmatrix}
        A-\pi  B  & \pi  \\ B  A+A^*B +\omega -B \pi  B  & B \pi -A^*
        \end{pmatrix}.
    \]
    \item We shall also encounter $\beta$-transformations. To a bivector $\beta\in\Wedge^2V$ we associate the element 
    \[
    \betatrans{\beta } = \begin{pmatrix}
        1 & \beta \\ 0 & 1
    \end{pmatrix}\in\mathrm{O}(\VV),
    \]
    which acts on  $\Jcal$ by 
    \[
       \betatrans{\beta }\Jcal\betatrans{-\beta }= \begin{pmatrix}
        A+\beta  \omega   & \pi -A\beta  -\beta  A^*-\beta  \omega \beta   \\ \omega & -A^*-\omega \beta  \end{pmatrix}. \hfill\qedhere
    \]
\end{enumerate}
\end{remark}

A central classification result is the following Darboux theorem for GC structures.
\begin{theorem}[Generalized Darboux theorem, Section 4.7 \cite{Gualtieri:2003dx}]\label{theorem:generalizedDarboux}
    Any   GC structure $\Jcal$ on $\VV = V \oplus V^*$ is a 
    $B$-field transformation of a direct sum $\Jcal_1 \oplus \Jcal_2$, 
    where $\Jcal_1 = \Jcal_J \in \mathrm{O}(V_1 \oplus V_1^*)$ 
    and $\Jcal_2 = \Jcal_\omega \in \mathrm{O}(V_2 \oplus V_2^*)$ are 
    associated with a complex structure $J$ on  $V_1$ and a symplectic structure $\omega$ on $V_2$ for some splitting $V = V_1 \oplus V_2$. The type of $\Jcal$ is $\dim_\CC (V_1,J)$. 
\end{theorem}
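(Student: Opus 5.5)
The plan is to work with the maximal isotropic subspace $L=L_{\Jcal}\subset \VV_\CC$ instead of $\Jcal$ itself. The goal is to produce a real $B$ with $\btrans{-B}L = L_{\Jcal_J}\oplus L_{\Jcal_\omega}$ for a splitting $V=V_1\oplus V_2$. The proof has four steps: a structural description of $L$, a splitting of $V$, construction of the $B$-field, and the type count.

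\textbf{Step 1 (structure of $L$).} Let $E=\pi(L)\subset V_\CC$. Isotropy of $L$ gives $L\cap V^*_\CC = \mathrm{Ann}(E)$. Together with maximality this yields the standard description
\[
L = L(E,\varepsilon) = \{X+\xi \;:\; X\in E,\ \xi|_E = \iota_X\varepsilon\},
\qquad \varepsilon\in \Wedge^2E^*,
\]
which I would verify by a dimension count. Since $\VV_\CC = L\oplus\overline L$, projecting to $V_\CC$ gives $E+\overline E = V_\CC$. Hence $\Delta := E\cap\overline E$ is conjugation invariant, so $\Delta = U_\CC$ for a real subspace $U\subset V$. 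If $\dim_\CC E = 2n-k$, where $k$ is the type, then $\dim_\RR U = 2n-2k$.

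\textbf{Step 2 (the two pieces).} I first show that $\omega := -\operatorname{Im}\varepsilon|_{U}$ is a real symplectic form on $U$. Suppose $X\in U$ lies in its kernel. Then $\varepsilon(X,\cdot)$ and $\overline{\varepsilon}(X,\cdot)$ agree on $\Delta$. Using $E+\overline E=V_\CC$, I can pick $\xi\in V^*_\CC$ with $\xi|_E=\iota_X\varepsilon$ and $\xi|_{\overline E}=\iota_X\overline\varepsilon$; the two prescriptions are compatible precisely because they agree on $\Delta$. Then $X+\xi\in L\cap\overline L=0$, so $X=0$. This argument, together with checking the overlap compatibility it relies on, is the step I expect to need the most care.

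Next, choose a real complement $V_1$ of $V_2:=U$ in $V$. Let $E_1$ be the projection of $E$ to $V_{1,\CC}$ along $\Delta$. Since $\Delta\subset E$, we get $E=\Delta\oplus E_1$. The relations $E+\overline E=V_\CC$ and $E\cap\overline E=\Delta$ give $E_1\oplus\overline{E_1}=V_{1,\CC}$. Hence $E_1=V_1^{1,0}$ for a unique complex structure $J$ on $V_1$, with $\dim_\CC(V_1,J)=k$.

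\textbf{Step 3 (the $B$-field).} The target is
\[
L_{\Jcal_J}\oplus L_{\Jcal_\omega} = L(E,\varepsilon'),
\]
where $\varepsilon'$ equals $-\ii\omega$ on $\Delta\times\Delta$ and vanishes on all pairs involving $E_1$. Since $\btrans{-B}L(E,\varepsilon)=L(E,\varepsilon-B|_E)$, it suffices to find a real $B\in\Wedge^2V^*$ with $B|_E=\eta:=\varepsilon-\varepsilon'$. By construction $\eta|_\Delta=\operatorname{Re}\varepsilon|_\Delta$ is real.

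I define $B$ on $V_\CC=\Delta\oplus E_1\oplus\overline{E_1}$ block by block:
\begin{itemize}
\item $B=\eta$ on $\Delta\times\Delta$, $\Delta\times E_1$ and $E_1\times E_1$;
\item $B=\overline\eta$ on the conjugate blocks $\Delta\times\overline{E_1}$ and $\overline{E_1}\times\overline{E_1}$;
\item $B=0$ on $E_1\times\overline{E_1}$.
\end{itemize}
This $B$ is conjugation invariant, hence real, and restricts to $\eta$ on $E$. Therefore $\btrans{-B}\Jcal\btrans{B}=\Jcal_J\oplus\Jcal_\omega$.

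\textbf{Step 4 (type).} The type is invariant under $B$-transforms, and $\pi(L_{\Jcal_J}\oplus L_{\Jcal_\omega}) = V_1^{1,0}\oplus V_{2,\CC}$ has codimension $k=\dim_\CC(V_1,J)$. This gives the final claim of the theorem.
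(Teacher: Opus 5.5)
Your argument is correct: the description $L=L(E,\varepsilon)$, the nondegeneracy of $\operatorname{Im}\varepsilon|_{\Delta}$ obtained from $L\cap\overline{L}=0$ by gluing $\iota_X\varepsilon$ and $\iota_X\overline{\varepsilon}$ across $E+\overline{E}=V_\CC$, the complex structure on a real complement of $\Delta$, and the blockwise real extension of $\varepsilon-\varepsilon'$ to a $B$-field all check out. The sign conventions $L_{\Jcal_\omega}=\mathrm{graph}(-\ii\omega)$ and $e^{-B}L(E,\varepsilon)=L(E,\varepsilon-B|_E)$ are also handled consistently. The paper gives no proof and only cites Gualtieri, and your route is essentially his standard linear-algebra argument; the cited Section 4.7 is the local version at regular points, and its pointwise content is exactly what you prove.
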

We now investigate the problem of transforming a complex structure $\Jcal_J$ into an arbitrary GC structure $\Jcal$ using a product of a $B$-field transformation and a $\beta$-transformation (or the other way round). The rest of this section will be dedicated to proving the following parametrization result for the space of linear GC structures on $V$:

\begin{proposition}\label{proposition:B_then_beta_eventype}
    Let $\Jcal\in\mathcal{GC}_V$ be a GC structure of even cotype. 
    \begin{enumerate}
        \item There exist $B\in\Wedge^2V^*, \beta\in\Wedge^2 V$ and $J\in\mathcal C_V$, such that 
        \[
        \Jcal = \betatrans{\beta }\btrans{B }\Jcal_J\btrans{-B }\betatrans{-\beta }.
        \]
        \item There exist $B\in\Wedge^2V^*, \beta\in\Wedge^2 V$ and $J\in\mathcal C_V$, such that 
        \[
        \Jcal = \btrans{B }\betatrans{\beta }\Jcal_J\betatrans{-\beta }\btrans{-B }.
        \]
    \end{enumerate} 
\end{proposition}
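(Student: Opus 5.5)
The plan is to reduce to the two building blocks of the generalized Darboux theorem (Theorem \ref{theorem:generalizedDarboux}). First I handle the symplectic block by an explicit hyperkähler construction. Then I move the outer $B$-field past the $\beta$-transform by a block (LDU-type) factorization in $\mathrm{O}(\VV)$.

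\textbf{Step 1 (Darboux reduction).} By Theorem \ref{theorem:generalizedDarboux}, $\Jcal=\btrans{B_0}(\Jcal_{J_1}\oplus\Jcal_{\omega})\btrans{-B_0}$ for a splitting $V=V_1\oplus V_2$. Since $\operatorname{cotype}(\Jcal)=\tfrac12\dim V_2$, even cotype means $\dim V_2=4m$. On $V_1$ nothing needs to be done: take $B=\beta=0$ and $J=J_1$.

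\textbf{Step 2 (symplectic block).} On $V_2\cong\RR^{4m}$ choose a linear hyperkähler triple $(g,I,J',K)$ with $\omega_{J'}=\omega$. This is possible because $\omega$ is linear and $4m$-dimensional, so one can take a standard quaternionic Darboux basis. I would then verify by direct block computation, using the formulas of Remark \ref{remark:AdBbeta}, that
\[\Jcal_\omega=\betatrans{\beta_2}\btrans{B_2}\Jcal_I\btrans{-B_2}\betatrans{-\beta_2},\qquad B_2=\tfrac12\omega_K,\ \ \beta_2=-\omega_K^{-1}.\]
This is the identity announced in the introduction. Since everything is $\mathrm{Sp}(1)$-equivariant, it suffices to check it on $\mathbb H$ with its standard triple, where the check is a $4\times4$ computation. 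The quaternionic relations $IJ'=K$ are what make the diagonal blocks vanish.

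\textbf{Step 3 (absorbing $B_0$).} Put $\beta=0\oplus\beta_2$, $B=0\oplus B_2$ and $J_0=J_1\oplus I$. Then $\Jcal=g\,\Jcal_{J_0}\,g^{-1}$ with $g=\btrans{B_0}\betatrans{\beta}\btrans{B}$. I want to rewrite
\[\btrans{B_0}\betatrans{\beta}=\betatrans{\beta'}\btrans{B'}\begin{pmatrix}A&0\\0&(A^{-1})^*\end{pmatrix}.\]
Comparing blocks, this requires $(A^{-1})^*=1+B_0\beta$, $B'A=B_0$ and $\beta'(A^{-1})^*=\beta$. Whenever $1+B_0\beta$ is invertible these equations can be solved, and $B'$, $\beta'$ are automatically skew because the product lies in $\mathrm{O}(\VV)$. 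Next I conjugate $\btrans{B}$ past the $\mathrm{GL}(V)$ factor, which turns it into $\btrans{A^*BA}$. The $\mathrm{GL}(V)$ factor then replaces $J_0$ by $AJ_0A^{-1}$. Finally I merge the two adjacent $B$-transforms. This yields (1).

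\textbf{Main obstacle.} The difficulty is guaranteeing $\det(1+B_0\beta)\neq0$. I would exploit the freedom in Step 2: any $h\in\mathrm{Sp}(V_2,\omega)$ transports the hyperkähler triple, replacing $\beta_2$ by $h\beta_2h^*$. Together with rescalings $\omega_K\mapsto t\,\omega_K$ allowed by changing the compatible triple, $\det(1+B_0\beta)$ becomes a nonzero polynomial/real-analytic function on a connected parameter space. It is nonzero at points where $\beta$ is small, so it is nonzero generically, and a good choice exists. If this scaling freedom turns out to be insufficient, a fallback is to first use $\mathrm{GL}(V)$-conjugation to make $B_0$ generic relative to $\beta$.

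\textbf{Part (2).} I would do the same construction with the roles of $B$ and $\beta$ exchanged. That is, I need the analogous identity $\Jcal_\omega=\btrans{B_2'}\betatrans{\beta_2'}\Jcal_I\betatrans{-\beta_2'}\btrans{-B_2'}$. I expect it to come from a similar choice, e.g. $B_2'=\omega_K$ and $\beta_2'=-\tfrac12\omega_K^{-1}$, checked again on $\mathbb H$. The outer $B_0$ then simply merges with $B_2'$, so no factorization argument is needed.
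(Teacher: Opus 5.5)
Your Step 2 identity and your Part (2) are correct. Part (2) is the paper's own Darboux-plus-merge argument: the symplectic block is written as $\Jcal_\omega=\btrans{\omega_K}\betatrans{-\frac12\omega_K^{-1}}\Jcal_I\betatrans{\frac12\omega_K^{-1}}\btrans{-\omega_K}$ and the outer $B$-field is absorbed.

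The gap is in Part (1), which rests entirely on $\det(1+B_0\beta)\neq0$, and your justification for this fails.
\begin{itemize}
\item Since $\beta=0\oplus\beta_2$, block-triangularity and Sylvester's identity give $\det(1+B_0\beta)=\det(1_{V_2}+\beta_2 b)$ with $b=B_0|_{V_2\times V_2}$.
\item This block $b$ is intrinsic to $\Jcal$. Indeed $V_2$ is the real index $\Delta$ with $\Delta_\CC=\pi(L_\Jcal)\cap\overline{\pi(L_\Jcal)}$. Moreover $b-\ii\omega$ is the restriction to $\Delta_\CC$ of the $2$-form $\epsilon$ on $\pi(L_\Jcal)$ defined by $L_\Jcal=\{X+\xi : X\in\pi(L_\Jcal),\ \xi|_{\pi(L_\Jcal)}=\epsilon(X)\}$.
\item Consequently no other Darboux presentation changes $b$. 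Your fallback of conjugating by $\mathrm{GL}(V)$ is not available either, because it changes $\Jcal$ itself.
\item The only remaining freedom is $\beta_2$. By Proposition \ref{proposition:betaBJomega} it must be $\beta_2=-I\omega^{-1}$ with $I^2=-1$ and $I^*\omega I=-\omega$, so $(\beta_2\omega)^2=-1$.
\item In particular, rescaling $\omega_K\mapsto t\omega_K$ with $\omega_{J'}=\omega$ fixed is impossible for $t\neq\pm1$, since $\omega_K^{-1}\omega_{J'}=I$. Also $\beta_2$ stays bounded away from $0$.
\end{itemize}
So no point of your parameter space has $\beta$ small, and the genericity argument has no base point.

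What you actually need is a lemma: for every $b\in\Wedge^2V_2^*$, some such $I$ makes $\omega I+b$ nondegenerate. One can prove this, for instance by bringing the $\omega$-self-adjoint operator $\omega^{-1}b$ to a symplectic normal form $\mathrm{diag}(N,N^T)$. One then takes $I=\mathrm{diag}(P,P^T)$ with $P$ a complex structure such that $P+N$ is invertible. This is a genuine step that your proposal does not supply.

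The paper avoids the commutation problem altogether. Exchanging the roles of $V$ and $V^*$ in Theorem \ref{theorem:generalizedDarboux} gives $\Jcal=\betatrans{\tbeta}(\Jcal_{J_1}\oplus\Jcal_{\omega_2})\betatrans{-\tbeta}$. Since $\beta$-transforms preserve the parity of the cotype, $\dim V_2$ is still divisible by $4$. Your Step 2 identity $\Jcal_{\omega_2}=\betatrans{\beta_2}\btrans{B_2}\Jcal_I\btrans{-B_2}\betatrans{-\beta_2}$ then applies on $V_2$. The outer $\betatrans{\tbeta}$ merges directly with $\betatrans{\beta_2}$, exactly as the $B$-fields merge in your Part (2).

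A minor slip: moving $\btrans{B}$ to the left past $\mathrm{diag}(A,(A^{-1})^*)$ produces $\btrans{(A^{-1})^*BA^{-1}}$, not $\btrans{A^*BA}$.
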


 The Darboux theorem will allow us to reduce to the case when $\Jcal = \Jcal_\omega$ for a symplectic form $\omega\in\Wedge^2V^*$. Concerning the question of uniqueness, the explicit formulas for $B$-field transformations and $\beta$-transformations given in Remark \ref{remark:AdBbeta} immediately imply: 
\begin{proposition} 
    Suppose $J,J'\in \C_V$ are complex structures and $\omega,\omega'\in\Wedge^2V^*$ are symplectic forms. Then for any $B\in \Wedge^2V^*,\beta\in\Wedge^2V$ we have
    \begin{enumerate}
        \item $\btrans{B }\Jcal_\omega\btrans{-B } = \Jcal_{\omega'}$ if and only if $B=0$ and $\omega=\omega'$.
        \item $\btrans{B }\Jcal_J\btrans{-B } = \Jcal_{J'}$ if and only if  $J=J'$ and $B\in\Wedge^{1,1}_JV^*$.
        \item $\betatrans{\beta }\Jcal_J \betatrans{-\beta }= \Jcal_{J'}$ if and only if $J=J'$ and $\beta\in\Wedge^{1,1}_JV$.
        \item $\betatrans{\beta }\Jcal_\omega\betatrans{-\beta } = \Jcal_{\omega'}$ if and only if $\beta=0$ and $\omega=\omega'$.
    \end{enumerate}
    In particular: If the equation 
    \[
        \betatrans{\beta }\btrans{B }\Jcal_\omega\btrans{-B }\betatrans{-\beta }  = \Jcal_J
    \]
    admits a solution $(B,\beta)$, then $B$ is unique and $\beta$ is unique up to addition of a bivector of type $(1,1)$ and if the equation 
    \[
        \betatrans{\beta }\btrans{B }\Jcal_J \btrans{-B }\betatrans{-\beta } = \Jcal_\omega
    \]
    admits a solution $(B,\beta)$, then $\beta$ is unique and $B$ is unique up to addition of a form of type $(1,1)$.
\end{proposition}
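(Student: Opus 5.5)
The plan is to read everything off the explicit conjugation formulas of Remark \ref{remark:AdBbeta}. We specialise them to $\Jcal_J$, which has $A=J$, $\pi=0$, $\omega=0$, and to $\Jcal_\omega$, which has $A=0$, $\pi=-\omega^{-1}$ and lower-left entry $\omega$. Then we compare the blocks of $\mathfrak{so}(\VV)=\End(V)\oplus\Wedge^2V\oplus\Wedge^2V^*$ entry by entry. No Darboux-type argument is needed; everything is linear algebra.

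\textbf{Items (1)--(4).} In each case we compute the conjugate and compare blocks.
\begin{itemize}
\item[(1)] The conjugate $\btrans{B}\Jcal_\omega\btrans{-B}$ has upper-left block $-\pi B=\omega^{-1}B$. This vanishes for $\Jcal_{\omega'}$, so $B=0$, and then $\omega=\omega'$.
\item[(4)] Dually, $\betatrans{\beta}\Jcal_\omega\betatrans{-\beta}$ has upper-left block $\beta\omega$. Its vanishing forces $\beta=0$ by non-degeneracy of $\omega$.
\item[(2)] The conjugate $\btrans{B}\Jcal_J\btrans{-B}$ equals $\begin{pmatrix} J & 0\\ BJ+J^*B & -J^*\end{pmatrix}$. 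Hence it is of the form $\Jcal_{J'}$ iff $J=J'$ and $BJ+J^*B=0$. Unwinding, the second condition says $B(Jv,w)+B(v,Jw)=0$ for all $v,w$, i.e.\ $B(Jv,Jw)=B(v,w)$, which is exactly $B\in\Wedge^{1,1}_JV^*$.
\item[(3)] The conjugate $\betatrans{\beta}\Jcal_J\betatrans{-\beta}$ has upper-right block $-J\beta-\beta J^*$. Its vanishing is $J$-invariance of $\beta$, i.e.\ $\beta\in\Wedge^{1,1}_JV$; the diagonal blocks remain $J,-J^*$.
\end{itemize}

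\textbf{First ``in particular'' statement.} Here $J$ and $\omega$ are fixed. Rewrite the equation as
\[\btrans{B}\Jcal_\omega\btrans{-B}=\betatrans{-\beta}\Jcal_J\betatrans{\beta}.\]
Compare upper-left blocks: the left side gives $\omega^{-1}B$, while the right side gives $J+(-\beta)\cdot 0=J$. Hence $B=\omega J$ is forced, so $B$ is unique. If $(B,\beta')$ is another solution, then
\[\betatrans{-\beta}\Jcal_J\betatrans{\beta}=\betatrans{-\beta'}\Jcal_J\betatrans{\beta'},\]
so $\betatrans{\beta'-\beta}$ stabilises $\Jcal_J$. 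By item (3), $\beta'-\beta\in\Wedge^{1,1}_JV$.

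\textbf{Second ``in particular'' statement.} Symmetrically, write
\[\btrans{B}\Jcal_J\btrans{-B}=\betatrans{-\beta}\Jcal_\omega\betatrans{\beta}.\]
The upper-left blocks give $J=-\beta\omega$, so $\beta=-J\omega^{-1}$ is unique. The remaining freedom is then $B'-B\in\Wedge^{1,1}_JV^*$ by item (2).

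\textbf{Main obstacle.} The only delicate point is sign and convention bookkeeping. We must check that $J^*B$ means $B(\cdot)\circ J$, so that the $(1,1)$-condition comes out correctly. We must also make sure the conjugation formulas are applied with the right sign of $\beta$ after moving $\betatrans{\beta}$ to the other side.
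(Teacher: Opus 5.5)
Your proposal is correct and follows the paper's route. The paper simply asserts that everything follows immediately from the explicit conjugation formulas in Remark \ref{remark:AdBbeta}. Your block-by-block comparison is that argument made explicit, including reading off $B=\omega J$ and $\beta=-J\omega^{-1}$ from the upper-left blocks, as the paper also does in Proposition \ref{proposition:betaBJomega}.
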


\begin{proposition}\label{proposition:betaBJomega}
Let $\omega\in\Wedge^2V^*$ be a symplectic form and let $J\in \C_V$ be a complex structure on $V$.
\begin{enumerate}
\item The equation
\[\betatrans{\beta }\btrans{B }\Jcal_\omega\btrans{-B }\betatrans{-\beta }= \Jcal_J\]
admits a solution $(B,\beta)\in \Wedge^2V^*\times\Wedge^2V$ if and only if $\omega\in (\Wedge^{2,0}_JV^*+\Wedge^{0,2}_JV^*)\cap\Wedge^2V^*$. In this case $B$ lies in the subspace $(\Wedge^{2,0}_JV^*+\Wedge^{0,2}_JV^*)\cap\Wedge^2V^*$ and is uniquely determined by 
\[B  = \omega  J.\] 
The general form of $\beta$ is 
\[\beta   = -\frac{1}{2} B^{-1} + \tau\] 
for an arbitrary $\tau\in\Wedge^{1,1}_JV$.
\item The equation
\[\betatrans{\beta }\btrans{B }\Jcal_J\btrans{-B }\betatrans{-\beta }= \Jcal_\omega\]
admits a solution $(B,\beta)\in \Wedge^2V^*\times\Wedge^2V$ if and only if $\omega\in (\Wedge^{2,0}_JV^*+\Wedge^{0,2}_JV^*)\cap\Wedge^2V^*$. In this case $\beta$ lies in the subspace $(\Wedge^{2,0}_JV+\Wedge^{0,2}_JV)\cap\Wedge^2V$ and is uniquely determined by 
\[\beta  = -J \omega^{-1} = (\omega  J)^{-1}.\] 
The general form of $B\in\Wedge^2V^*$ is 
\[B   = -\frac{1}{2} \beta^{-1}+ B \] 
for an arbitrary $b\in\Wedge^{1,1}_JV^*$.
\end{enumerate}
\end{proposition}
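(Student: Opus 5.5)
The proof is a direct block-matrix computation using the formulas of Remark \ref{remark:AdBbeta}, followed by a linear-algebra analysis of the resulting equations. For (1), I first compute the $B$-transform of $\Jcal_\omega=\begin{pmatrix}0&-\omega^{-1}\\ \omega&0\end{pmatrix}$. Setting $A=0$ and $\pi=-\omega^{-1}$ in Remark \ref{remark:AdBbeta}(1) gives
\[
\btrans{B}\Jcal_\omega\btrans{-B}=\begin{pmatrix}\omega^{-1}B & -\omega^{-1}\\ \omega+B\omega^{-1}B & -B\omega^{-1}\end{pmatrix}.
\]
Next I apply Remark \ref{remark:AdBbeta}(2). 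A $\beta$-transform leaves the lower-left block unchanged, so comparing with $\Jcal_J$ forces $\omega+B\omega^{-1}B=0$. With this vanishing, the upper-left block of the $\beta$-transform is just $\omega^{-1}B$, and it must equal $J$. Hence $B=\omega J$ is forced, which gives uniqueness.

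The next step is to determine when $\omega J$ is actually a $2$-form. Skewness of $\omega J$ means $\omega(Jv,w)=\omega(v,Jw)$, equivalently $\omega(Jv,Jw)=-\omega(v,w)$. This says exactly that $\omega$ is $J$-anti-invariant, i.e.\ $\omega\in(\Wedge^{2,0}_JV^*+\Wedge^{0,2}_JV^*)\cap\Wedge^2V^*$. The same anti-invariance shows that $B=\omega J$ lies in this subspace. Under this hypothesis the lower-left condition holds automatically, since $B\omega^{-1}B=\omega J^2=-\omega$. The lower-right block is then automatic by skew-symmetry, because the result is again in $\mathfrak{so}(\VV)$.

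The remaining, and I expect the only nontrivial, step is the upper-right block. Using $\omega'=0$ and $A'=J$, it reduces to the linear equation
\[
J\beta+\beta J^*=-\omega^{-1}
\]
for $\beta\in\Wedge^2V$. I would check that $\beta_0=-\tfrac12 B^{-1}=\tfrac12 J\omega^{-1}$ is a skew particular solution. The key identity is $J\omega^{-1}J^*=-\omega^{-1}$, obtained by inverting $J^*\omega J=-\omega$ (which is anti-invariance) and using $J^{-1}=-J$. With this identity, $J\beta_0=-\tfrac12\omega^{-1}$ and $\beta_0J^*=-\tfrac12\omega^{-1}$, so the two terms sum to $-\omega^{-1}$. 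The solutions of the homogeneous equation $J\tau+\tau J^*=0$ are exactly the $J$-invariant bivectors, i.e.\ $\tau\in\Wedge^{1,1}_JV$. This yields the general form $\beta=-\tfrac12B^{-1}+\tau$. Necessity of the type condition on $\omega$ was already established, so the "if and only if" follows.

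For (2), I would either repeat the computation with the roles of the blocks exchanged, or more economically apply the involution of $\VV$ swapping $V$ and $V^*$. This involution conjugates $\btrans{B}$ into a $\beta$-transform and vice versa, and it exchanges $\Jcal_J$ and $\Jcal_\omega$ with structures of the same kind (up to sign conventions, which need tracking). Taking inverses then relates the equation in (2) to one of the form (1). Concretely, the lower-left block forces $\omega=\beta^{-1}$-type relations giving $\beta=-J\omega^{-1}=(\omega J)^{-1}$. The remaining block gives the equation $BJ+J^*B=\ldots$, with particular solution $-\tfrac12\beta^{-1}$ and kernel $\Wedge^{1,1}_JV^*$. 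In the statement, the added term should read $b$ rather than $B$.
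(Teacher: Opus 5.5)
Your argument for (1) is the paper's: the same block computation from Remark \ref{remark:AdBbeta}. The lower-left block forces $\omega+B\omega^{-1}B=0$, the upper-left forces $B=\omega J$, skewness gives $J^*\omega J=-\omega$, and the upper-right reduces to $J\beta+\beta J^*=-\omega^{-1}$. The difference is cosmetic. The paper multiplies by $-J$ to get $B^{-1}=-\beta+J\beta J^*$ and reads off $-\tfrac12B^{-1}$ as the $(2,0)+(0,2)$-projection of $\beta$. You instead exhibit a particular solution and identify the kernel with $\Wedge^{1,1}_JV$.

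For (2) the routes genuinely differ, and yours is valid.

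The paper simply redoes the computation for $\betatrans{\beta}\btrans{B}\Jcal_J\btrans{-B}\betatrans{-\beta}$:
\begin{itemize}
\item the lower-left block gives $\omega=BJ+J^*B$, which is automatically $J$-anti-invariant;
\item the upper-left block gives $J+\beta\omega=0$, hence $\beta=-J\omega^{-1}$;
\item the identity $\omega J=-B+J^*BJ$ identifies the $(2,0)+(0,2)$-part of $B$.
\end{itemize}
So in the direct computation it is the upper-left block, not the lower-left one as your ``concretely'' sentence says, that pins down $\beta$.

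Your reduction via the swap $\varphi$ of \S\ref{sec:Tdualitypoint} is correct once the signs are tracked. Conjugating by $\varphi$ and then inverting turns the equation in (2) into exactly the equation in (1) on $V^*$, with data $(\omega',J',B',\beta')=(-\omega^{-1},-J^*,-\beta,-B)$. The type condition for $-\omega^{-1}$ relative to $-J^*$ is precisely your identity $J\omega^{-1}J^*=-\omega^{-1}$. Part (1) then returns $\beta=-\omega^{-1}J^*=-J\omega^{-1}$ and $B=-\tfrac12\beta^{-1}-\tau$ with $\tau$ of type $(1,1)$.

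What each approach buys: your route makes the symmetry between (1) and (2) conceptual, as T-duality over a point plus inversion, and gives every clause of (2) for free once (1) is proved. The paper's direct computation avoids the sign bookkeeping and yields the explicit block formula it reuses in the proof of Lemma \ref{lemma:Tdualflip}. You are also right that the added term in the statement should read $b$.
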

\begin{proof}
\begin{enumerate}
\item Remark \ref{remark:AdBbeta} implies in particular for a symplectic form $\omega\in \Wedge^2V^*$: 
\begin{align*}
    & \betatrans{\beta }\btrans{B }\Jcal_\omega \btrans{-B }\betatrans{-\beta } = \betatrans{\beta }\begin{pmatrix}
    \omega^{-1} B  & -\omega^{-1} \\ \omega +B \omega^{-1} B  & -B \omega^{-1}\end{pmatrix}\betatrans{-\beta }\\
    &=  \begin{pmatrix}
    \omega^{-1} B  +\beta (\omega +B  \omega^{-1} B  ) & -\omega^{-1}-\omega^{-1} B  \beta  -\beta  B  \omega^{-1}-\beta  (\omega +B  \omega^{-1} B  )\beta   \\ \omega +B  \omega^{-1} B  & -B  \omega^{-1}-(\omega +B  \omega^{-1} B  )\beta  \end{pmatrix}
\end{align*}
It follows that the equation
\[ \betatrans{\beta }\btrans{B }\Jcal_\omega \btrans{-B }\betatrans{-\beta }  = \Jcal_{J}\quad \text{for some } J\in \C_V\]
is equivalent to the following conditions: 
\begin{align}
    0&= \omega +B \omega^{-1}B  \label{eq:1}\\
    0&= \omega^{-1}+\omega^{-1} B\beta +\beta B\omega^{-1}+\beta (\omega+B\omega^{-1} B)\beta \label{eq:2}. 
\end{align}
If these are satisfied, then 
\[\betatrans{\beta}\btrans{B}\Jcal_\omega \btrans{-B}\betatrans{-\beta} = \begin{pmatrix}
    \omega^{-1} B & 0 \\ 0 & -B\omega^{-1} \end{pmatrix}.\]
In particular
\[J := \omega^{-1} B,\] 
is a complex structure on $V$ and satisfies
\[J^2 = -1,\quad J^* = B\omega^{-1}\]
and
\[B = \omega J.\]
Note that for this to be a two-form, i.e. $B^* = - B$, we must require 
\[
-\omega  J = (\omega  J)^* = J^*\omega^* = -J^*\omega ,
\]
i.e. 
\[
J^*\omega  J = -\omega , 
\]
which implies that also 
\[J^*B  J=-B .\] 
In particular, 
\[B,\omega \in (\Wedge^{2,0}_JV^*+\Wedge^{0,2}_JV^*)\cap \Wedge^2V^*.\]
Note that if \eqref{eq:1} holds, then \eqref{eq:2} simplifies to 
\[0 = \omega^{-1}+\omega^{-1}B \beta +\beta  B \omega^{-1}.\]
With the above notation, \eqref{eq:2} becomes
\[\omega^{-1} = -J\beta  -\beta  J^*,\]
which we may rewrite in the form 
\[J B^{-1} = -J\beta  -\beta  J^*,\]
or, after multiplying both sides by $-J = J^{-1}$ from the left,
\[ B^{-1} = -\beta +J\beta  J^*.\]
In other words, we see that $B\in (\Wedge^{2,0}_JV^*+\Wedge^{0,2}_JV^*)\cap \Wedge^2V^*$ is non-degenerate and $- B^{-1}$ is given by (two times) the component of $\beta$ in $(\Wedge^{2,0}_JV+\Wedge^{0,2}_JV)\cap \Wedge^2V$ , which is given by
\[\beta^{(2,0)}+\beta^{(0,2)} = \frac{1}{2}(\beta -J\beta  J^*).\]
\item Remark \ref{remark:AdBbeta} implies in particular
\begin{align*}
    &\betatrans{\beta }\btrans{B }\Jcal_J\btrans{-B }\betatrans{-\beta }= \betatrans{\beta }\begin{pmatrix}
    J & 0 \\ B  J+J^*B  & -J^*\end{pmatrix}\betatrans{-\beta }\\
    &=  \begin{pmatrix}
    J + \beta(B  J+J^*B  ) & -J\beta-\beta J^*-\beta(B  J+J^*B  )\beta \\ B  J+J^*B   & -J^*-(B  J+J^*B  )\beta\end{pmatrix}
\end{align*}
It follows that 
\[\betatrans{\beta }\btrans{B }  \Jcal_{J}\btrans{-B  }\betatrans{-\beta } = \Jcal_\omega\]
for some nondegenerate $\omega\in\Wedge^2V^*$ is equivalent to the equation
\begin{align}
    0&= J + \beta  (B  J+J^*B  ) \label{eq:3}
\end{align}
If this equation is satisfied, then 
\[\betatrans{\beta }\btrans{B }  \Jcal_{J}\btrans{-B  }\betatrans{-\beta } = \begin{pmatrix}
   0& -J\beta -\beta  J^*-\beta (B  J+J^*B  )\beta  \\ B  J+J^*B   & 0 \end{pmatrix}.\]
Equating this with $\Jcal_\omega$ and looking at the lower left entry, we find
\begin{equation}\label{eq:5}
\omega  = B  J+J^*B   \in(\Wedge^{2,0}_JV^*+\Wedge^{0,2}_JV^*)\cap \Wedge^2V^*.
\end{equation}
It follows that 
\[-B  +J^*B  J = \omega  J.\]
The left hand side of this equation is (minus two times) the projection of $B$ onto the subspace $(\Wedge^{2,0}_JV^*+\Wedge^{0,2}_JV^*)\cap \Wedge^2V^*$. And we see that we are free to add to $B$ any two-form $b\in\Wedge^{1,1}_JV^*$. 

Inserting \eqref{eq:5}  into \eqref{eq:3}, we get
\[0 = J+\beta \omega ,\]
i.e.
\[\beta  = -J\omega^{-1}. \hfill \qedhere\]
\end{enumerate}
\end{proof}

\begin{proposition}\label{proposition:Bbetaomeganeven}
    Let $V$ be a real vector space of dimension $2n$.
    \begin{enumerate}
        \item Let $\omega\in\Wedge^2V^*$ be a symplectic form. Then there exists a complex structure $J\in \C_V$, a $2$-form $B\in\Wedge^2V^*$ and a bivector $\beta\in\Wedge^2V$ such that 
        
        \[\betatrans{\beta }\btrans{B }  \Jcal_{\omega}\btrans{-B  }\betatrans{-\beta }= \Jcal_J\]
        if and only if $n$ is even, i.e. $\dim V$ is divisible by $4$.
        \item Let $J\in\mathcal C_V$ be a complex structure. Then there exists a symplectic form $\omega\in\Wedge^2V^*$, a $2$-form $B\in\Wedge^2V^*$ and a bivector $\beta\in\Wedge^2V$ such that 
    \[\betatrans{\beta }\btrans{B }  \Jcal_{J}\btrans{-B  }\betatrans{-\beta }= \Jcal_\omega\]
    if and only if $n$ is even, i.e. $\dim V$ is divisible by $4$.
    \end{enumerate}
\end{proposition}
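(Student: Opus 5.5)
By Proposition \ref{proposition:betaBJomega}, both parts reduce to one linear-algebra question: for which $n$ does $V$ admit a pair $(\omega,J)$, with $\omega$ symplectic and $J\in\C_V$, such that $\omega\in(\Wedge^{2,0}_JV^*+\Wedge^{0,2}_JV^*)\cap\Wedge^2V^*$? Equivalently, $J^*\omega J=-\omega$, so $\omega$ is $J$-anti-invariant.

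In part (1), $\omega$ is given and we need some $J$. In part (2), $J$ is given and we need some nondegenerate $\omega$. In either case, once such a pair exists, Proposition \ref{proposition:betaBJomega} supplies $B$ and $\beta$ explicitly: $B=\omega J$ and $\beta=-\tfrac12B^{-1}$ in (1), and $\beta=-J\omega^{-1}$ and $B=-\tfrac12\beta^{-1}$ in (2). So the whole statement comes down to the claim that a nondegenerate $J$-anti-invariant real $2$-form exists iff $n$ is even.

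\emph{Necessity.} Suppose $J^*\omega J=-\omega$ with $\omega$ nondegenerate. Set $K:=\omega^{-1}J^*\omega\in\End(V)$. Then $K^2=\omega^{-1}(J^*)^2\omega=-1$. From $J^*\omega J=-\omega$ we get $J^*\omega=\omega J$ (multiply on the right by $-J=J^{-1}$, and sort out the signs). Hence $K=\omega^{-1}\omega J\cdot$\,(sign)$=\pm J$ — this route is circular, so I will use a cleaner invariant instead.

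The form $\omega$ is the real part of the complex $2$-form $\omega^{2,0}+\omega^{0,2}$, so $\omega=\mathrm{Re}(\sigma)$ with $\sigma:=2\omega^{2,0}\in\Wedge^{2,0}_JV^*$. Since $\sigma$ is of type $(2,0)$, it is a complex bilinear form on the complex vector space $(V,J)$ of complex dimension $n$. If $\sigma$ were degenerate, some nonzero $v$ would have $\sigma(v,\cdot)=0$; then $\omega(v,\cdot)=\mathrm{Re}\,\sigma(v,\cdot)=0$, contradicting nondegeneracy of $\omega$. So $\sigma$ is a nondegenerate complex skew form on $\CC^n$, which forces $n$ even. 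The step I must check with care is the identity $\omega(v,w)=\mathrm{Re}\,\sigma(v,w)$ for real vectors $v,w$, using the conventions for $\Wedge^{2,0}$ in terms of $V^{1,0}$; it follows from $\omega=\omega^{2,0}+\overline{\omega^{2,0}}$.

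\emph{Sufficiency.} Let $n=2m$.

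In part (2), $J$ is given. Choose a complex basis of $(V,J)$ and take $\sigma=\sum_{k=1}^m dz_{2k-1}\wedge dz_{2k}$, a complex symplectic form. Put $\omega=\mathrm{Re}\,\sigma$. This $\omega$ is $J$-anti-invariant. It is also nondegenerate: if $\omega(v,\cdot)=0$, then $\mathrm{Im}\,\sigma(v,w)=-\mathrm{Re}\,\sigma(v,Jw)=0$ for all $w$, so $\sigma(v,\cdot)=0$ and hence $v=0$.

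In part (1), $\omega$ is given. Take a Darboux basis, so that $V\cong\RR^{4m}$ with the standard $\omega$. Identify $\RR^{4m}\cong\CC^{2m}$ in such a way that $\omega$ becomes $\mathrm{Re}$ of the standard complex symplectic form; this is possible because all real symplectic forms are $\mathrm{GL}(V)$-conjugate. Concretely, this amounts to transporting the pair $(J,\mathrm{Re}\,\sigma)$ constructed for part (2) via a linear isomorphism carrying $\mathrm{Re}\,\sigma$ to $\omega$; the transported $J$ then does the job.

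The main obstacle is the necessity direction, specifically pinning down the sign and type conventions so that nondegeneracy of $\omega$ genuinely transfers to nondegeneracy of the complex form $\sigma$. Everything else follows directly from Proposition \ref{proposition:betaBJomega}.
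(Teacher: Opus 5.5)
Your argument is correct and is essentially the paper's proof. The paper also reduces both parts, via Proposition~\ref{proposition:betaBJomega}, to the existence of a nondegenerate $J$-anti-invariant $\omega$. For necessity it shows that $\Omega=\omega J+\ii\omega$ (which is exactly $\ii\sigma$ in your notation) has kernel $V^{0,1}$, so it is a complex symplectic $(2,0)$-form and $n$ must be even. For the converse it takes the real and imaginary parts of a complex symplectic form and uses $\mathrm{GL}(V)$-transitivity on symplectic forms, as you do.

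The only cleanup needed is to delete the abandoned computation with $K=\omega^{-1}J^*\omega$ from your necessity paragraph.
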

\begin{proof}
    The discussion above shows that if $J\in\mathcal C_V$ and a solution $(B,\beta)$ exist, then $B   = \omega  J$ and this is a two-form iff $\omega\in (\Wedge^{2,0}_JV^*+\Wedge^{0,2}_JV^*)\cap V$. Extend $B,\omega$ complex bilinearly and consider 
    \[\Omega = B+\ii\omega \in \Wedge^2V_\cc^*.\]
    Let $a,b\in V$ such that $v = a+\ii b\in V_{\cc}$ lies in the kernel of $\Omega$, i.e. $\omega (v) = 0$. Then
    \[0 = \omega (v) = B (a+\ii b)+\ii\omega (a+\ii b) = B (a)-\omega (b) + \ii(B  (b)+\omega (a)).\]
    This means that 
    \[\omega (b) = B (a) = \omega (Ja)\]
    and
    \[\omega (a) = -B (b) = -\omega (Jb).\]
    By the nondegeneracy of $\omega$ this is equivalent to 
    \[b = Ja.\]
    Thus, we see that $v = a+\ii b\in \ker\Omega$  is equivalent to $v\in V^{0,1} = \mathrm{Eig}(J,-\ii)\subset V_\cc$. It follows that $\Omega\in\Wedge^{2,0}V^*$ is a \emph{complex symplectic form}, and thus the complex dimension of $V$ must be even. 

    Conversely, if the  dimension of $V$ is divisible by $4$ and $\omega\in\Wedge^2V^*$ is nondegenerate,  we may pick a complex structure $J$ and a complex symplectic form $\Omega\in\Wedge^{2,0}V^*$, and decompose it into real and imaginary parts $\Omega = \tB+\ii\tilde\omega$. Then the calculation in the first part of the proof shows that $\tilde \omega  J = \tB $. Since any two symplectic forms on $V$ are in the same $\mathrm{GL}(V)$-orbit, we may use the $\mathrm{GL}(V)$ action to achieve $\tilde\omega = \omega$. 
\end{proof}

\begin{proof}[Proof of Proposition \ref{proposition:B_then_beta_eventype}]
\begin{enumerate}
    \item By the generalized Darboux theorem, we can find $\tilde{B}\in\Wedge^2V^*$ and a splitting $ V= V_1\oplus V_2$, a complex structure $J_1\in\mathcal C_{V_1}$ and a symplectic form $\omega_2\in\Wedge^2 V_2^*$ such that 
        \[
            \btrans{\tB }\Jcal\btrans{-\tB } = \Jcal_{J_1}\oplus \Jcal_{\omega_2}.
        \]
    Since the cotype of $\Jcal$ is even, it follows that the complex dimension of $V_2$ is even and therefore
    Proposition \ref{proposition:Bbetaomeganeven}  allows us to find $B_2\in\Wedge^2 V_2^*, \beta_2\in \Wedge^2 V_2$ and $J_2\in\mathcal C_{V_2}$ such that  
    \[
        \Jcal_{\omega_2}= \btrans{B _2}\betatrans{\beta_2 }\Jcal_{J_2}\betatrans{-\beta_2 }\btrans{-B _2}\in\mathcal{GC}_{V_2}. 
    \]
    Using the splitting $V = V_1\oplus V_2$ and extending by zero on $V_2$, respectively $V_2^*$, we may view $B _2,\beta_2 $ as elements of $\mathfrak{so}(\VV)$. With this interpretation we can write 
    \[
    \Jcal = \btrans{-\tB  +B _2}\betatrans{\beta_2 }\Jcal_{J_1\oplus J_2}\betatrans{-\beta_2 }\btrans{\tB -B_2 }.
    \] 
    \item Exchanging the role of $V$ and $V^*$ in the generalized Darboux theorem, we see that there also exists $\tbeta \in\Wedge^2V$ such that 
     \[
            \betatrans{\tbeta }\Jcal\betatrans{-\tbeta } = \Jcal_{J_1}\oplus \Jcal_{\omega_2}.
    \]
    Note that although $\beta$-transforms do not preserve the cotype in general, they still preserve the \emph{parity} of the cotype (see \cite{Gualtieri:2003dx}).\hfill\qedhere
\end{enumerate}
\end{proof}

We will use Hyperkähler manifolds as running examples throughout the paper:
\begin{example}\label{example:Hyperkähler}
 Suppose $\dim V = 4n$ for some $n\in\mathbb N$. Consider a (pseudo-)Hyperk\"ahler structure $(g,I,J,K)$ on $V$, i.e. $g$ is a (possibly indefinite) inner product on $V$ and $I,J,K\in\C_V$ are skew-symmetric with respect to $g$  and satisfy the quaternionic relations. Denote by 
\[
    \omega _I = g  I,\quad \omega_J  = g  J,\quad \omega_K  = g  K
\]
the associated symplectic forms. The quaternionic relations imply $\omega_J,\omega_K \in \Wedge^{2,0}_IV^*+\Wedge^{0,2}_IV^*$ and moreover 
\[
    \omega_K  I= g  KI = g  J = \omega_J .
\]
It then follows from Proposition   \ref{proposition:betaBJomega} that
        \begin{equation}
            \Jcal_{\omega_J} = \betatrans{-\omega_K^{-1}}\btrans{\frac{\omega _K}{2}}\Jcal_I\btrans{\frac{-\omega _K}{2}}\betatrans{\omega_K^{-1}}.\hfill\qedhere
        \end{equation}
\end{example}

\section{T-duality}\label{sec:Tdual}
T-duality will be indispensable in our description of the symmetries of the generalized Dolbeault algebra $\Abf(\Jcal)$. In this section we will recall the formulation of T-duality as a symmetry of principal torus bundles by \cite{BEM04} and how it induces a symmetry of Courant algebroids, as was described in \cite{CG11}. 
We further specialize to T-duality for tori, and provide explicit equations describing when two invariant GC structures on dual tori are T-dual in Lemma \ref{lemma:Tdualflip}, which will re-appear in later sections when describing the symmetries of $\Abf(\Jcal)$.
We also review the notion of integrality for T-duality.

\begin{definition}
Let $M$ and $\tilde{M}$ be principal $T^k$-bundles with base $B$. Consider the fiber-product $M\times_B \tilde{M}$, and the following diagram: 
\begin{center}
\begin{tikzcd}
& M\times_B \tilde{M} \ar[ld,"p"],\arrow{rd}[swap]{\tilde{p}} &\\
M \ar[rd,"\pi"] & & \tilde{M} \arrow{ld}[swap]{\tilde{\pi}}\\
& B & 
\end{tikzcd}
\end{center}
We say that $M$ and $\tilde{M}$ are \emph{T-dual} if there exists a closed $T^{2k}$-invariant two-form $F \in \Omega^2_{T^{2k}}(M\times_B\tilde{M})$ such that
the restriction
\begin{equation*}
F \colon \mathfrak{t}_M \times \mathfrak{t}_{\tilde{M}} \rightarrow \rr,
\end{equation*}
where $\mathfrak{t}_M,\mathfrak{t}_{\tilde{M}}$ denotes the tangent space to the fibres of $p$, $\tilde{p}$ respectively, is non-degenerate. We call $M \times_B \tilde{M}$ the \emph{correspondence space}. 
\end{definition}
We further say that $F$ is \emph{unimodular} if $F((\partial_{\theta_i},\partial_{\theta_j^{\vee}})) \in {\rm GL}(k;\mathbb{Z})$ where $(\partial_{\theta_i})_i$ and $(\partial_{\theta_j^{\vee}})_j$ are bases of invariant period 1 vector fields of $\mathfrak{t}_M$ respectively $\mathfrak{t}_{\tilde{M}}$. If further $F$ defines an integral cohomology class we call the T-duality \emph{integral}.

Let $(TM \oplus T^*M)/T^k$ denote the Courant algebroid over $B$ whose sections consists of the invariant sections $\Gamma_{T^k}(\mathbb{T}M)$ of $\mathbb{T}M$.

\begin{theorem}[\cite{CG11}]\label{theorem:courantiso}
If $M$ and $\tilde{M}$ are T-dual, then there is an isomorphism of Courant algebroids
\begin{equation*}
\varphi \colon (TM \oplus T^*M)/T^k \rightarrow (T\tilde{M} \oplus T^*\tilde{M})/T^k.
\end{equation*}
Moreover, if $\Jcal$ is a $T^k$-invariant GC structure on $M$ it gets transformed to a $T^k$-invariant GC structure on $\tilde{M}$ denoted $\tilde{\Jcal}$.
\end{theorem}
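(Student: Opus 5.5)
The plan is to build $\varphi$ from the correspondence space, in the style of Cavalcanti--Gualtieri. Work over the base $B$ with $T^k$-invariant sections. An invariant section of $\TT M$ is $X+\xi$ with $X$, $\xi$ invariant, and pulls back along $p$ to a $T^{2k}$-invariant section of $\TT(M\times_B\tilde M)$. Given $X+\xi\in\Gamma_{T^k}(\TT M)$, choose the unique invariant lift $\hat X$ of $X$ to the correspondence space, with vertical ambiguity in $\mathfrak t_{\tilde M}$. Define
\[
\varphi(X+\xi) = \tilde p_*\hat X + \tilde\xi, \qquad p^*\xi - \iota_{\hat X}F = \tilde p^*\tilde\xi .
\]
The $\mathfrak t_{\tilde M}$-component of $\hat X$ is fixed by requiring that $p^*\xi-\iota_{\hat X}F$ vanish on $\mathfrak t_{\tilde M}$, so that it is basic for $\tilde p$. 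This condition is a linear equation whose coefficient matrix is the restriction $F\colon\mathfrak t_M\times\mathfrak t_{\tilde M}\to\RR$. That restriction is nondegenerate, so the equation has a unique solution. Hence $\varphi$ is a well-defined bundle map over $B$, and running the same construction with $-F$ and the roles of $M,\tilde M$ swapped gives an inverse.

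Next I will check that $\varphi$ preserves the three Courant structures.

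\emph{Anchor.} The anchor maps to $TB$ after quotienting, and both anchors descend compatibly because $\pi\circ p=\tilde\pi\circ\tilde p$.

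\emph{Pairing.} Write $\hat X$ for the lift of $X$ and $\hat Y$ for the lift of $Y$. Then
\[
\langle\varphi(X+\xi),\varphi(Y+\eta)\rangle = \tfrac12\bigl(\tilde\xi(\tilde p_*\hat Y)+\tilde\eta(\tilde p_*\hat X)\bigr).
\]
Substituting the defining relation for $\tilde\xi$ and $\tilde\eta$, the $F$-terms appear as $F(\hat X,\hat Y)$ and $F(\hat Y,\hat X)$ and cancel by skew-symmetry. What remains is $\xi(p_*\hat Y)+\eta(p_*\hat X)$, which is the original pairing.

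\emph{Bracket.} I use that the pullbacks $p^*,\tilde p^*$ of invariant sections are compatible with the Dorfman bracket on the correspondence space. The map $X+\xi\mapsto \hat X + p^*\xi$ is then followed by $e^{-F}$, and this is an automorphism of the Dorfman bracket because $dF=0$. Finally the result is pushed forward along $\tilde p$. Each step is bracket-preserving on invariant, basic-up-to-the-chosen-lift sections.

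The main obstacle is this bracket step. The lift $\hat X$ is not $p$-related in a naive way, since it carries a $\tilde p$-vertical correction. So one must check that brackets of such corrected lifts are again the corrected lifts of the brackets. I would handle this by noting two facts. First, the correction is forced by the basic condition. Second, the Dorfman bracket of two sections whose $\mathfrak t_{\tilde M}$-contractions vanish again has vanishing contraction, by invariance and the Cartan formula. Uniqueness of the correction then gives compatibility. The $T^{2k}$-invariance of $F$ is what makes the Lie derivatives along vertical vector fields vanish.

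For the second statement, set $\tilde\Jcal = \varphi\circ\Jcal\circ\varphi^{-1}$. Then $\tilde\Jcal^2=-1$, and $\tilde\Jcal$ is orthogonal because $\varphi$ preserves the pairing. Its $+\ii$-eigenbundle is $\varphi(L_\Jcal)$. This is involutive because $\varphi$ preserves brackets and invariant sections span each fiber, so integrability can be tested on invariant sections.
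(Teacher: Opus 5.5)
Your proposal is correct and is the Cavalcanti--Gualtieri construction that the paper cites and recalls in the proof of Lemma~\ref{lem:Tdualintegral}. You lift $X$, fix the vertical ambiguity so that $p^*\xi-\iota_{\hat X}F$ is $\tilde p$-basic (using nondegeneracy of $F$ between the two fibre directions), and push forward along $\tilde p$; your bracket check via $e^{-F}$ together with uniqueness of the correction is sound.

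There is one labelling slip. The correction to $\hat X$ lies in $\ker p_*$, so it is $p$-vertical and $\hat X$ is genuinely $p$-related to $X$. The basicness condition is imposed on $\ker\tilde p_*$. These two spaces should therefore not both be called $\mathfrak t_{\tilde M}$, although your identification of the coefficient matrix shows this is what you meant.
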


\begin{remark}[Type change]\label{remark:typechange}
Given a GC structure $\Jcal$ on $M$ there are several ways in which one my predict the type of the T-dual structure $\tilde{\Jcal}$. This is carefully described in Example 4.3 in \cite{CG11}. For us it's important to remark that when $k=1$, one has the relation that
\begin{equation*}
    \type(\tilde{\Jcal}) \pm 1
\end{equation*}
implying that the parity will always change when performing T-duality along the circle.
\end{remark}

\subsection{T-duality over a point}
\label{sec:Tdualitypoint}
Of particular importance in this paper is the case where $M = V/\Gamma$ is an $n$-dimensional torus, and T-duality is performed with the induced $T^n$-action. In this case, necessarily $\tilde{M} = V^*/\Gamma^{*}$, where $\Gamma^{*} = \set{f \in V^* ~:~ f(\Gamma) \subset \mathbb{Z}}$. The choice of $F\in \Omega^2(M \times \tilde{M})$ then corresponds to the choice of isomorphism $TM \simeq T^*\tilde{M}$, which we will take to be induced by the canonical isomorphism $\phi \colon V \simeq V^{**}$. If we then suppress this $\phi$ from the notation, we conclude that the isomorphism of Theorem \ref{theorem:courantiso} is given by 
\begin{equation}
\varphi =  \begin{pmatrix}
     0 & \id_{V^*}\\
     \id_V & 0
 \end{pmatrix}: 
 V \oplus V^* \rightarrow V^* \oplus V 
\end{equation}
Consequently, given a $T^n$-invariant GC structure $\Jcal$ on $M$, identified with $\Jcal \colon V\oplus V^* \rightarrow V \oplus V^*$, we obtain the following formula for the T-dual GC structure:
\begin{equation}\label{equation:Tdual}
\tilde{\Jcal} = 
\begin{pmatrix} 0 & \id_{V^*} \\ \id_V & 0 \end{pmatrix}
\Jcal
\begin{pmatrix} 0 & \id_{V^*} \\ \id_V & 0 \end{pmatrix}.
\end{equation}

Our next course of action is to describe the T-dual of a $(\beta,B)$-transformation of a complex structure: Given a complex structure $I \colon V \rightarrow V$, the T-dual of $\mathcal{J}_I$ is given by $\mathcal{J}_{-I^*}$. Given $\beta \in \Wedge^2V,B\in \Wedge^2V^*$, and the GC structure
\begin{equation*}
L_{\Jcal} =  \betatrans{\beta }\btrans{B }L_{\Jcal_I}    
\end{equation*}
a direct computation shows that T-dual structure is given by
\begin{equation}\label{eq:Tdual1}
    L_{\tilde{\Jcal}}= \btrans{\beta}\betatrans{B}L_{\Jcal_{-I^*}},
\end{equation}
where $B$ in this formula is viewed as a bivector on $V^*$ and $\beta$ as a 2-form on $V^*$.

However, it will be essential to again write this as a $(\tbeta,\tilde{B})$-transformation for, for $\tbeta \in \Wedge^2V^*$ and $\tilde{B} \in \Wedge^2V$. The equations relating $(\beta,B)$ and $(\tbeta,\tB)$ will be given in the following lemma, and will be of fundamental importance in this paper:

\begin{lemma}
\label{lemma:Tdualflip}
    Let 
    \[
    L_{\Jcal} = \betatrans{\beta }\btrans{B } L_{\Jcal_I}
    \] 
    be a GC structure on $V/\Gamma$ and 
    \[
    L_{\Jcal'} = \betatrans{\tbeta }\btrans{\tB }L_{\Jcal_{\tilde I}}
    \]
    a GC structure on $V^* / \Gamma^*$.
    Then $\tilde{\Jcal} = \Jcal'$ if and only if \begin{align*}
        &\tilde I(1-\tbeta  \beta ) = -(1-\tbeta  \beta ) I^* \\
        &(B  I + I^* B ) (1 - \beta  \tbeta ) = I^* \tbeta  - \tbeta  \tilde I^* \\
        & (\tB  \tilde I + \tilde I^* \tB ) (1-\tbeta  \beta ) = \tilde I^* \beta  - \beta  I^*
    \end{align*}
    In particular, if $\alpha^{-1} := (1-\tbeta  \beta )$ is invertible, then $\tilde{\Jcal} = \Jcal'$ if and only if
    \begin{align}
        &\tilde I = -\alpha^{-1} I^* \alpha \nonumber \\
        & B^{(0,2)_I} = (\tbeta  \alpha^*)^{(0,2)_I} \label{equation:Poissongauge1} \\
        & \tB ^{(0,2)_{\tilde I}} = ( \beta  \alpha)^{(0,2)_{\tilde I}}.\label{equation:Poissongauge2}
    \end{align}
    
    \begin{proof}
        We compute $\Jcal$:
        \begin{align*}
            \Jcal &=\betatrans{\beta }\btrans{B } \begin{pmatrix}
                I & 0 \\ 0 &-I^* 
            \end{pmatrix} \btrans{-B }\betatrans{-\beta }
            \\&= \begin{pmatrix}
    I + \beta (B  I+I^*B  ) & -I\beta -\beta  I^*-\beta (B  I+I^*B  )\beta  \\ B  I+I^*B   & -I^*-(B  I+I^*B  )\beta \end{pmatrix}
    \end{align*}
    Hence
    \begin{align*}
      \tilde{\Jcal} = \begin{pmatrix}
        -I^*-(B  I+I^*B  )\beta  & B  I+I^*B  \\
        -I\beta -\beta  I^*-\beta (B  I+I^*B  )\beta  & I + \beta (B  I+I^*B  )
    \end{pmatrix}
    \end{align*}
    needs to be equal to 
    \begin{align*}
        \Jcal' = 
        \begin{pmatrix}
            \tilde I + \tbeta (\tB  \tilde I+\tilde I^* \tB  ) & -\tilde I \tbeta -\tbeta  \tilde I^*-\tbeta (\tB  \tilde I+\tilde I^*\tB  )\tbeta  \\ \tB  \tilde I+\tilde I^* \tB   & -\tilde I^*-(\tB  \tilde I+\tilde I^* \tB  )\tbeta \end{pmatrix}.
    \end{align*}
    Let us assume that $\Jcal' = \tilde{\Jcal}$. We define 
    \[
    B'  := B  I + I^* B 
    \]
    and 
    \[
    \tB'  := \tB  \tilde I + \tilde I^* \tB .
    \]
    By comparing top-left and top-right entries, we obtain
    \begin{align*}
        B'  &= -(\tilde I \tbeta +\tbeta  \tilde I^* + \tbeta  \tB'  \tbeta ) \\ 
        &= -(\tilde I + \tbeta  \tB' ) \tbeta  - \tbeta  \tilde I^* \\
        &= (I^* + B'  \beta ) \tbeta - \tbeta  \tilde I^*, 
    \end{align*}
    which implies 
    \begin{equation}\label{equation:t1}
        B'  (1 - \beta  \tbeta ) = I^* \tbeta  - \tbeta  \tilde I^*.
    \end{equation}
    Similarly, we find 
    \begin{align}
        \label{equation:t2}
        \tB'  (1-\tbeta  \beta ) = \tilde I^* \beta  - \beta  I^*.
    \end{align}
    Now compose top-left entries from the right with $(1-\tbeta  \beta )$ to get 
    \[
        (\tilde I + \tbeta  \tB' ) (1-\tbeta  \beta ) = -(I^* +  B' \beta ) (1-\tbeta  \beta )\]
     and use the identities \eqref{equation:t1} and \eqref{equation:t2} to obtain
    \[   \tilde I(1-\tbeta  \beta ) = -(1-\tbeta  \beta ) I^*.\]
    The remaining statements follow similarly.
    \end{proof}

\end{lemma}

\begin{remark}
In the case where $(1-\tbeta \beta )$ is invertible, there is a Poisson geometric interpretation of the equations \eqref{equation:Poissongauge1} and \eqref{equation:Poissongauge2}.

Given two Poisson structures $\pi,\pi'$ on $M$, they are said to be \emph{gauge equivalent} if there exists a closed two-form $B$ such that $(\mathrm{id}+\pi B )$ is invertible and satisfies the equation:
\begin{equation*}
\pi'  = \pi (1+\pi B )^{-1}.    
\end{equation*}
Given a Poisson structure $\pi$ it's graph $L_{\pi} = {\rm graph }(\pi) \subset TM \oplus T^*M$ defines a Dirac structure. The term gauge equivalence comes from the fact the associated Dirac structures are related via $L_{\pi'} = \btrans{B } L_{\pi}$.

If we view $B$ as a bivector on $V^*$ and $\beta$ as a two-form on $V^*$, then the equation \eqref{equation:Poissongauge1} tells us precisely that $\tbeta$ is the $\beta$-gauge transformation of $B$. Similarly, \eqref{equation:Poissongauge2} tells us that $\beta$ is the $B$-gauge of $\tbeta$.
\end{remark}

The condition that $1 - \tilde{\beta}\beta $ is invertible is not a property of the GC structure, rather of the particular way in which it is presented as a $(\beta,B)$-transformation. In fact, for any invariant GC structure there exists choices of $(\beta,B)$ for which this condition holds, as we will see in Lemma \ref{lemma:pert}.

\subsection{T-duality over a torus}
When given a torus $V/\Gamma$ it is guaranteed that the T-dual over a point is again a torus. When performing T-duals with respect to lower dimensional tori this needn't be the case, and even when it is imposed the T-duality operations might not be compatible with the structures present. We thus single out those T-dualities which are compatible: 
\begin{definition}
Let $V / \Gamma$ and $\tilde V / \tilde \Gamma$ be tori. We say that $V/\Gamma$ and $\tilde V / \tilde \Gamma$ are \emph{affine integral T-dual} if we have a diagram
\[\begin{tikzcd}
                        & \Gamma \times_{\Gamma_B} \tilde \Gamma \arrow[ld] \arrow[rd] &                                      \\
\Gamma \arrow[rd, "p"'] &                                                              & \tilde \Gamma \arrow[ld, "\tilde p"] \\
                        & \Gamma_B                                                     &                                     
\end{tikzcd}\]
together with a 2-form $F \in \Wedge^2 (\Gamma \times_{\Gamma_B} \tilde \Gamma)^*$, such that
\begin{enumerate}
    \item $p$ and $\tilde p$ are $\mathbb Z$-linear and surjective,
    \item $F$ is unimodular, i.e., letting $\Gamma_f := \text{ker}(p)$ and $\tilde \Gamma_f := \text{ker}(\tilde p)$, we have that $F|_{\Gamma_f \times \tilde \Gamma_f} : \Gamma_f \to (\tilde \Gamma_f)^*$ is invertible. \qedhere
\end{enumerate}
\end{definition}

\begin{lemma}\label{lem:Tdualintegral}
    Let $V/\Gamma$ and $\tilde V/ \tilde \Gamma$ be affine integral T-dual. Then the induced equivalence of linear Courant algebroids $\phi : V \oplus V^* \to \tilde V \oplus \tilde V^*$ is integral, i.e., $\phi(\Gamma \times \Gamma^*) = \tilde \Gamma \times \tilde \Gamma^*$. In particular, if we choose an isomorphism $\Gamma \cong \tilde \Gamma$, then the corresponding automorphism of $V \oplus V^*$ is contained in $O(\Gamma, \Gamma^*) \cong O(n,n;\mathbb Z)$.
    \begin{proof}
        By Theorem \ref{theorem:courantiso}, we obtain an isomorphism of Courant algebroids over the base $B/\Gamma_B$.    
        In the case of affine T-duality, the Courant isomorphism is constant along the base $B / \Gamma_B$ (it also follows from the inspection below), hence we can describe it by an isomorphism 
        \[ \phi : V \oplus V^* \to \tilde V \oplus \tilde V^* \]
        preserving the inner products. To show that $\phi$ preserves lattices, we inspect the definition of $\phi$ given in \cite[Section 3]{CG11}. Let $X + \xi \in \Gamma \oplus \Gamma^*$. Choose an auxiliary lift $\hat X_{\mathrm{aux}}$ of $X$ along $\Gamma \times_{\Gamma_B} \tilde \Gamma \to \Gamma$. Any other lift in $V \times_{B} \tilde V$ will differ by an element in $\tilde V_f$. Hence, according to \cite[Section 3]{CG11}, we need to find the unique element $\Delta X \in \tilde V_f$ such that
        \[ (p^* \xi - F(\hat X_{\mathrm{aux}})) |_{\Gamma_f} = F(\Delta X)|_{\Gamma_f}. \]
        By construction and as $F$ is integral, the left hand side is contained in $\Gamma_f^*$. 
        Moreover, by unimodularity, $F|_{\Gamma_f \times \tilde \Gamma_f} : \Gamma_f \to (\tilde \Gamma_f)^*$ is invertible, so $\Delta X \in \tilde \Gamma_f \subset \tilde V_f$. Thus the correct lift $\hat X = \hat X_{\mathrm{aux}} + \Delta X$ lies in $\Gamma \times_{\Gamma_B} \tilde \Gamma$. By construction, $p^* \xi - F(\hat X)$ is basic, and
        \[ \phi(X + \xi) = \tilde p_* \hat X + (p^* \xi - F(\hat X)) \in \tilde \Gamma \times \tilde \Gamma^*. \qedhere \]
    \end{proof}
\end{lemma}

\begin{remark}\label{remark:ts1matrix}
    T-duality relative to a circle as in Remark \ref{remark:typechange} corresponds to a matrix $\begin{pmatrix} 0 & 1 \\ 1 & 0    \end{pmatrix}
    \oplus \id_{2n-2}$ when we unravel definitions.
\end{remark}

\section{Cohesive modules}\label{sec:cohmod}
In this section we will recall the definition of curved differential graded algebra, and the cohesive modules over them. We further recall how the category of cohesive modules over the Dolbeault algebra of a complex manifold gives rise to the derived category of coherent sheaves in Theorem \ref{th:block}.
\begin{definition}\label{definition:cdga}
    A \emph{curved dg-algebra} or \emph{cdg-algebra} $A = (A^\bullet,\cdot,d,B)$ consists of 
    \begin{enumerate}
        \item a $\mathbb Z$-graded algebra $A^\# = (A^{\bullet}, \cdot)$,
        \item a map, called the \emph{differential}, $d \colon A^{\bullet} \to A^{\bullet+1}$ satisfying \[d(ab) = d(a)b + (-1)^{|a|}a d(b),\]
        \item and an element $B \in A^2$, called the \emph{curvature element}, satisfying
        \begin{align*}
            &d(B) = 0,\\
            &d^2(a) = [B,a] = Ba - aB. 
        \end{align*}
    \end{enumerate}
    A \emph{(left) cdg-module} $P = (P^\bullet, ., d)$ over $A$ is a graded left module $P^\# = (P^{\bullet}, .)$ over the graded algebra $A^\#$ together with a map $d_P$ of degree $1$ satisfying the graded Leibniz rule
    \[ d_P(a.p) = d(a).p + (-1)^{|a|} a.d_P (p)\]
    and
    \[ d_P^2(p) = B.p.\]
    Given two cdg-modules $P = (P^\bullet,.,d_P)$ and $Q = (Q^\bullet,.,d_Q)$ we define 
    \[ \Hom_{A}^\bullet(P,Q) \]
    in degree $n$ to consist of maps $f \colon P^\bullet \to Q^{\bullet+n}$ commuting with the $A^\#$-action and endowed with the differential
    \[ d(f) = d_Q \circ f - (-1)^{|f|} f \circ d_P\]
    where $|f| = n$; one checks that indeed $d^2 = 0$ even when $B \neq 0$. With these definitions, we obtain the dg-category $A\mathrm{-Mod}$ of \emph{left cdg-modules over the cdg-algebra $A$}.
\end{definition}
\begin{definition}
    Let $A$ be a cdg-algebra. A \emph{cohesive module} over $A$ is a left cdg-module $P$ over $A$ such that its underlying graded module $P^\#$ over $A^\#$ is finitely generated and projective. The full dg subcategory of $A\mathrm{-Mod}$ consisting of cohesive modules over $A$ is denoted by $\mathcal P_{A}$.
\end{definition}

\begin{definition}
Let $\mathcal{C}$ be a differential graded category, the associated \emph{homotopy category} $\Ho \mathcal{C}$ is the category with the same objects as $\Ccal$ and whose morphisms are the 0th cohomology
\begin{equation*}
    \Ho \Ccal(x,y) = H^0(\Ccal(x,y)). \hfill \qedhere
\end{equation*}
\end{definition}
The dg-category $\Ho \Pcal_A$ is canonically triangulated for every curved dga $A$.

\begin{remark}
    The original definition of cohesive modules is due to Block \cite{BlockNC1} and is easily seen to be equivalent to Definition \ref{definition:cdga} in the case that $A$ is concentrated in non-negative degrees. Let us thus assume that $A$ is concentrated in non-negative degrees. Then a cohesive module over $A$ is equivalently specified by a bounded, graded $A^0$-module $E^\#$ consisting of finitely generated projective $A^0$-modules together with an $A$-superconnection, i.e.\ a map
    \[ \mathbb E \colon  A^\bullet \otimes_{A^0} E^\bullet \to A^\bullet \otimes_{A^0} E^\bullet \]
    which fulfills the Leibniz rule and satisfies the curvature equation \[\mathbb E^2(m) = B.m.\]
    One can decompose $\mathbb E = \mathbb E^0 + \mathbb E^1 + ...$ with
    \[ \mathbb E^i \colon E^\bullet \to A^i \otimes_{A^0} E^{\bullet+1-i}.\]
    The Leibniz rule is then equivalent to $\mathbb E^i$ being $A^0$-linear if $i \neq 1$ and fulfilling the Leibniz rule for $i = 1$. The curvature equation decomposes into
    \[\sum_{a+b = k} \mathbb E^a \mathbb E^b = 0 \quad \text{ for } k \neq 2 \]
    and 
    \[
    \mathbb E^0 \mathbb E^2 + \mathbb E^2 \mathbb E^0 + (\mathbb E^1)^2 = B.
    \]
    In particular, $(E^\bullet, \mathbb E^0)$ is a complex of finitely generated projective module $A^0$-modules. It turns out that the cohesive module $(A^\bullet \otimes_{A^0} E^\bullet, \mathbb E)$ is isomorphic to the zero-object in $\Ho (\mathcal P_A)$ if and only if the complex $(E^\bullet, \mathbb E^0)$ is acyclic.
\end{remark}
\begin{remark}
    The homotopy category of cohesive modules over a non-negatively graded curved algebra $A$ is equivalent to the compact objects in the so-called \emph{twisted derived category} of $A$ and conversely the twisted derived category can be recovered as the derived category of $\mathcal P_A$, see \cite{HH2} and references therein.
    If $A$ is a dg-algebra the twisted derived category is a refinement of the derived category.
\end{remark}
A fundamental example is the following. Let $X = (M,I)$ be a complex manifold. We obtain the Dolbeault dg-algebra:
\[ \mathbf A(X) := (\Omega^{0,\bullet}(X), \wedge, \bar \partial,0). \]
Block proved the following theorem:
\begin{theorem}[{\cite[Theorem 4.1.3]{BlockNC1}}]\label{th:block}
Let $X$ be a compact complex manifold, and consider its Dolbeault algebra $\mathbf A(X)$. Then the homotopy category of cohesive modules $\Ho \Pcal_{\mathbf{A}(X)}$ is equivalent to $\Dbcoh(X)$. 
\end{theorem}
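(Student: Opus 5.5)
The plan is to construct a dg-functor from $\Pcal_{\mathbf A(X)}$ to complexes of $\Ocal_X$-modules, and show it lands in $\Dbcoh(X)$ and induces an equivalence on homotopy categories.

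\textbf{The functor.} Let $\Ccal^\infty_X$ be the sheaf of smooth functions, so $A^0 = C^\infty(X)$. A cohesive module is a pair $(E^\bullet,\mathbb E)$.
\begin{enumerate}
\item By Serre--Swan, each finitely generated projective $C^\infty(X)$-module $E^k$ is the module of sections of a smooth vector bundle.
\item Sheafify the superconnection to get a complex of sheaves
\[\mathcal E := \bigl(\Omega^{0,\bullet}_X\otimes_{\Ccal^\infty_X}E^\bullet, \mathbb E\bigr),\]
totalized, with $\mathbb E^2 = 0$ because the curvature is zero.
\item The Leibniz rule in the $\mathbb E^1$ component makes $\mathcal E$ a complex of $\Ocal_X$-modules, since holomorphic functions are $\bar\partial$-closed.
\item Define $F(E,\mathbb E) := \mathcal E$, viewed in $D(\Ocal_X)$.
\end{enumerate}

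\textbf{Coherence and full faithfulness.} I would first prove a local structure lemma. On a small Stein polydisc $U$, the restriction $(E|_U,\mathbb E)$ is gauge-equivalent, by a filtered automorphism, to a superconnection $\mathbb E^0+\mathbb E^1$ in which $\mathbb E^1$ is a flat $\bar\partial$-operator, i.e.\ a holomorphic structure.
\begin{itemize}
\item The argument is inductive on form degree. At each step one solves $\bar\partial$-equations using the Dolbeault--Grothendieck lemma on polydiscs, together with the Koszul--Malgrange integrability theorem.
\item Once this holds, $\mathcal E|_U$ is quasi-isomorphic to a bounded complex of holomorphic vector bundles. So $F(E)$ has coherent cohomology and lands in $\Dbcoh(X)$.
\item For full faithfulness, $\Hom_{\mathbf A}(E,E')$ is the complex of global sections of the sheaf $\mathcal{H}om(\mathcal E,\mathcal E')$ of $\Ccal^\infty$-linear maps with the induced differential. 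This sheaf is fine.
\item By the local lemma, it is locally quasi-isomorphic to $\mathcal{RH}om_{\Ocal}(F(E),F(E'))$. Taking global sections of a fine resolution computes $R\Hom$.
\item Hence $H^0\Hom_{\mathbf A}(E,E')\cong \Hom_{\Dbcoh}(F(E),F(E'))$.
\end{itemize}

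\textbf{Essential surjectivity.} This is where I expect the main obstacle. A coherent complex on a non-projective compact $X$ generally has no global resolution by holomorphic vector bundles, so a naive resolution is not available. I would proceed as follows.
\begin{enumerate}
\item Use Malgrange's theorem that $\Ccal^\infty_X$ is flat over $\Ocal_X$. Then $\Ccal^\infty_X\otimes_{\Ocal}\mathcal F^\bullet$ has coherent $\Ccal^\infty$-cohomology, and by Hilbert syzygy it locally admits finite resolutions by free $\Ccal^\infty$-modules.
\item Using partitions of unity, which is the key advantage of working in the $C^\infty$ category, glue these into a global bounded complex $(E^\bullet,\mathbb E^0)$ of finitely generated projective $C^\infty(X)$-modules. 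This complex should be quasi-isomorphic to $\Ccal^\infty\otimes\mathcal F$.
\item Construct the higher components $\mathbb E^1,\mathbb E^2,\dots$ inductively by homological perturbation, in the style of Toledo--Tong twisting cochains. At each stage the obstruction is a $\mathbb E^0$-cycle, and it can be killed because $(E,\mathbb E^0)$ resolves a module on which $\bar\partial$ already acts.
\item Finally, check that the result maps quasi-isomorphically to $\mathcal F$.
\end{enumerate}
Boundedness follows from finite homological dimension, and compactness of $X$ ensures finite generation. The gluing in step 2 together with the inductive construction in step 3 is the technical heart of the proof.
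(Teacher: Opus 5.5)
Your overall architecture is the standard one; the paper gives no proof of its own here and simply cites Block. Your essential-surjectivity step is essentially Block's: Malgrange flatness, then a global bounded resolution of $\Ccal^\infty_X\otimes_{\Ocal}\mathcal F$ by finitely generated projective $C^\infty(X)$-modules, then an inductive construction of $\mathbb E^1,\mathbb E^2,\dots$ together with the comparison map.

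The genuine gap is the local structure lemma, on which you rest both coherence and full faithfulness. The induction you describe cannot get started. Flatness in form degree $2$ reads $(\mathbb E^1)^2=-(\mathbb E^0\mathbb E^2+\mathbb E^2\mathbb E^0)$, so $\mathbb E^1$ is not a flat $\bar\partial$-operator and Koszul--Malgrange does not apply to it. A filtered gauge $g=1+g_1+g_2+\cdots$ keeps $\mathbb E^0$ and replaces $\mathbb E^1$ by $\mathbb E^1-[\mathbb E^0,g_1]$. Killing the new form-degree-$2$ term then means solving $[\mathbb E^1,g_1]+[\mathbb E^0,g_2]-[\mathbb E^0,g_1]\,g_1=\mathbb E^2$. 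This is a $\bar\partial$-type equation for an operator that does not square to zero, coupled to a division problem $[\mathbb E^0,x]=y$ in smooth sections.

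If $\mathbb E^0$ had constant rank, you could transfer everything to the cohomology bundle of $(E,\mathbb E^0)$. There the degree-zero part vanishes, $\mathbb E^1$ becomes flat, and your induction does run. In general, however, $\mathbb E^0$ has jumping rank, which is exactly the situation of non-locally-free coherent cohomology that the theorem is about. Then $H(E,\mathbb E^0)$ is not a bundle, pointwise exactness yields no smooth primitives, and there is nothing to which Koszul--Malgrange can be applied. Note also that your lemma implies $\mathbb E^0$ becomes holomorphic for some holomorphic structure on $E$. So it already contains the coherence statement you deduce from it, and the sketch has not reduced the problem.

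What is missing is a genuinely analytic local theorem: $F(E)|_U$ has coherent cohomology, and $E|_U$ is homotopy equivalent to the Dolbeault module of a bounded complex of holomorphic bundles. Homotopy equivalence is all your fine-resolution argument for full faithfulness needs; a gauge isomorphism on the same underlying bundle is more than necessary. Proving this requires handling finitely presented but non-locally-free $\Ccal^\infty$-modules with an integrable $\bar\partial$-operator. An example is $\operatorname{coker}(\mathbb E^0\colon E^{b-1}\to E^b)$ in the top degree $b$, on which $\mathbb E^1$ does induce a flat $\bar\partial$-operator. Koszul--Malgrange covers only the locally free case, so you need something like Pali's theorem on $\bar\partial$-coherent sheaves, which rests on Malgrange's division theory.

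So you have placed the difficulty in the wrong spot. Essential surjectivity is comparatively formal once Malgrange flatness and global $C^\infty$-resolutions are available; this local step is the analytic heart of the theorem.
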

This theorem has been strengthened to non-compact complex manifolds in \cite{RHMC}. Other natural examples of cohesive modules in topology and geometry include $\infty$-local systems, see \cite{BlockHigher}, \cite{RHMC} and  \cite{Antweiler24} (in the presence of curvature). 
\begin{example}
    The equivalence between ${\rm Ho} \Pcal_{\mathbf{A}}$ and $\Dbcoh(X)$ is a generalization of the classical Koszul-Malgrange theorem. It states that a holomorphic vector $E$ bundle is equivalent to a smooth vector bundle together with a partial flat connection on it. In our terms, the corresponding cohesive module is of the form $\Gamma^\infty(E \otimes_{\mathcal O_X} (\Omega^{0,\bullet}, \bar \partial)) =(\Omega^{0,\bullet}(X,E), \bar \partial_{E})$ where $\bar \partial_{E}$ is the unique extension of $\bar \partial_E \colon \Gamma^{\infty}(E) \to \Omega^{0,1}(M,E)$ compatible with the Leibniz rule.
\end{example}
\section{Deforming complex structures}\label{sect:deformations}

 \subsection{Deformation theory of Dolbeault algebras and generalized complex structures}
    Let $(M, \Jcal)$ be a GC manifold. According to \cite{Gualtieri:2003dx}, the moduli space of GC structures close to $\Jcal$ is controlled by the dg Lie algebra (DGLA)
    \[ (\Gamma(\Wedge^{\bullet+1} L_{\Jcal}), d_{CE}, [\cdot, \cdot]_{GH})\]
    where $L_{\Jcal}$ is the $+\ii$-eigenbundle of $\Jcal$, $[\cdot, \cdot]_{GH}$ is the Gerstenhaber bracket induced by the Courant bracket, and $d_{CE}$ is the Chevalley-Eilenberg differential under the identification $L_{\Jcal} \cong (\bar L_{\Jcal})^*$.
    In the special case of a complex structure $I$ and $\Jcal_I = \begin{pmatrix}
        I &0 \\ 0 &-I^*
    \end{pmatrix}$,
    the $+\ii$-eigenbundle reads 
    \[ L_{\Jcal_I} = (T^{0,1} M)^*  \oplus T^{1,0} M \]
    and the deformation DLGA takes the form
    \[ (\Gamma(\Wedge^{0,\bullet} T^*M \otimes \Wedge^{\bullet+1,0} TM), \bar \partial, [\cdot, \cdot]_{GH}), \]
    which serves as a model for the derived global sections $R(\Gamma_{\mathcal O_X}(\Wedge^{\bullet+1} T_{\mathrm{hol}} M), [\cdot, \cdot]_{GH})$. By the Hochschild-Kostant-Rosenberg Theorem, its cohomology is equal to 
    \[ H^\bullet(R\Gamma_{\mathcal O_X}(\Wedge^{\bullet} T_{\mathrm{hol}} M)) = \mathrm{Ext}^\bullet_{\mathcal O_{X \times X}}(\Delta_* \mathcal O_X, \Delta_* \mathcal O_X).\]
    The latter space is often taken as the definition of the Hochschild cohomology $HH^\bullet(X)$ of $X$. 
    Hence, first order deformations of $\Jcal_{I}$ are controlled by the second Hochschild cohomology group
    \begin{align}\label{eq:SecondHH} HH^2(X) = H^2(\mathcal O_X) \oplus H^1(TX) \oplus H^0(\Wedge^2 TX).\end{align}
    
    Moreover, if $X$ is algebraic, then the Hochschild cohomology of $X$ equals the Hochschild cohomology of $\Dbcoh(X)$
    and this is expected to control deformations of $\Dbcoh(X)$.
    
    In the non-algebraic case, $\Dbcoh(X)$ is known not to be a strong invariant of $X$ and computations of its Hochschild cohomology are not available, due to lack of representability of functors. 
    
    In recent work, the first author introduced continuous Hochschild complexes $HC_{\mathrm{cont}}(A)$ of complete locally convex (curved) dg-algebras $A$. Their cohomology $HH^\bullet(A)$ can be computed in terms of derived endomorphisms of $A$ in a version of the contraderived category. In the case of the Dolbeault dg-algebra $\mathbf A(X)$ endowed with its natural Fréchet topology, he obtained: 
\begin{theorem}[Theorem 2.2.3 in \cite{antweilerContinuous}]
	\label{CtrDolbeault}
	Let $X = (M,I)$ be a complex manifold.
	There is an $L_\infty$-quasi-isomorphism \[(\Gamma(\Wedge^{0,\bullet} T^*X \otimes \Wedge^{\bullet+1,0} TX), \bar \partial, [\cdot, \cdot]_{GH}) \xrightarrow{\sim} HC_{\mathrm{cont}}(\mathbf A(X)).\] In particular, 
	\[ HH^\bullet_{\mathrm{cont}}(\mathbf A(X)) = H^\bullet(X, \Wedge^\bullet T_{\mathrm{hol}} X) = HH^\bullet(X).\]
\end{theorem}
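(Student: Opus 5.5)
The approach is to build an explicit HKR map, prove it is a quasi-isomorphism of complexes by a filtration argument, and only then upgrade it to an $L_\infty$-morphism.

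\textbf{The map.} Write $\mathbf A(X)^\# = \Gamma(M,\Wedge^{\bullet} (T^{0,1}M)^*)$ as the algebra of smooth functions on the graded manifold $\mathcal M = T^{0,1}M[1]$, with $\bar\partial$ a cohomological vector field $Q$ on $\mathcal M$. To a polyvector $\xi = \alpha\otimes v_1\wedge\dots\wedge v_k$, with $\alpha\in\Omega^{0,q}$ and $v_i\in T^{1,0}$, I would assign the continuous multidifferential cochain
\[ a_1\otimes\dots\otimes a_k \mapsto \tfrac{1}{k!}\textstyle\sum_\sigma \pm\, \alpha\, \iota_{v_{\sigma(1)}}\partial a_1\cdots \iota_{v_{\sigma(k)}}\partial a_k .\]
Here $\partial$ is the $(1,0)$-part of de Rham acting on $\Omega^{0,\bullet}$, and this is well defined locally. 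I would then check that this map intertwines $\bar\partial$ on polyvectors with the Hochschild differential twisted by $\bar\partial$. The underlying identity is that $[\bar\partial,\partial]=0$ and that the failure of $\iota_v\partial$ to be a derivation compatible with $\bar\partial$ is $\iota_{\bar\partial v}\partial$.

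\textbf{Quasi-isomorphism.} I would filter both sides by the degree in $\Wedge^{0,\bullet}$, i.e.\ by the weight of the $\bar\partial$-part of the Hochschild differential, and compare the resulting spectral sequences.
\begin{itemize}
\item On the graded page I need the continuous Hochschild cohomology of the graded-commutative Fréchet algebra $C^\infty(\mathcal M)$. A continuous HKR theorem (Connes' computation for $C^\infty(M)$, extended to the super case) gives polyvector fields on $\mathcal M$:
\[ \Gamma\bigl(\Wedge^\bullet T_{\mathbb C}M \otimes \Sym^\bullet (T^{0,1}M) \otimes \Wedge^{0,\bullet}\bigr). \]
\item Next I turn on the differential $[Q,-]$. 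Its leading part pairs the $T^{0,1}M$ directions in $\Wedge^\bullet T_{\mathbb C}M$ with the odd fibre directions $\Sym^\bullet(T^{0,1}M)$ through a Koszul complex. Since $T^{0,1}M$ is a vector bundle, this Koszul complex is acyclic except for the Koszul unit.
\item What survives is $\Gamma(\Wedge^{0,\bullet}\otimes\Wedge^{\bullet,0}TM)$ with differential $\bar\partial$, and the HKR map above induces exactly this identification.
\end{itemize}
The computation is local, so I would do it in holomorphic charts using the Fréchet structure and then glue with a partition of unity, since both sides are fine sheaves of complexes. The cohomological statement $HH^\bullet_{\mathrm{cont}} = H^\bullet(X,\Wedge^\bullet T_{\mathrm{hol}}X)$ follows, and the identification with $HH^\bullet(X)$ is the Hochschild--Kostant--Rosenberg statement recalled above.

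\textbf{$L_\infty$ upgrade.} The HKR map is not a Lie map for the Gerstenhaber bracket versus the Gerstenhaber bracket on cochains, so it must be corrected. I would use Kontsevich formality for the graded manifold $\mathcal M$ and globalize it following Dolgushev (Fedosov resolution by formal polyvectors and polydifferential operators on the formal neighbourhood, twisted by a flat connection). The $Q$-structure then enters as a Maurer--Cartan element: twisting both sides of the formality morphism by $Q$ gives an $L_\infty$-quasi-isomorphism whose linear term is a $Q$-twist of HKR. Finally, the Koszul step of the previous paragraph identifies the $Q$-twisted polyvectors on $\mathcal M$ with $(\Gamma(\Wedge^{0,\bullet}\otimes\Wedge^{\bullet+1,0}),\bar\partial,[\cdot,\cdot]_{GH})$ via an $L_\infty$-quasi-isomorphism, which I would obtain from homotopy transfer along an explicit contraction.

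\textbf{Main obstacle.} I expect the hardest step to be analytic: making the globalized formality and the twisting procedure land in \emph{continuous} cochains. This needs convergence of the Fedosov and twisting series in the Fréchet topology, and requires the contraderived/completed tensor products to behave well. I would first try to show that every term in the construction is a polydifferential operator of bounded order in each degree, so continuity is automatic. The twisting by $Q$ terminates because $\Wedge^{0,\bullet}$ is finite-dimensional fibrewise, so the sums are finite.
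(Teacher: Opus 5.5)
The paper gives no proof of this theorem. It imports it from \cite{antweilerContinuous}, and the only hint of the argument is that $HH^\bullet_{\mathrm{cont}}$ is computed by derived endomorphisms of the diagonal in a contraderived category. Your outline is a sensible independent route: an HKR map, a form-degree spectral sequence, a Koszul reduction, and then Kontsevich--Dolgushev formality on $\mathcal M=T^{0,1}M[1]$ twisted by $Q$, which is essentially the Liao--Sti\'enon--Xu formality theorem for dg manifolds.

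\textbf{The gap: totalization and convergence.} You never say which totalization of the Hochschild bicomplex $HC_{\mathrm{cont}}$ is, and your spectral-sequence comparison works for only one of them. A cochain $\mathbf A^{\hat\otimes k}\to\mathbf A$ of internal degree $p$ sits in total degree $k+p$. So in a fixed total degree, your filtration $F^p$ consists of the cochains of arity at most (total degree $-p$).
\begin{itemize}
\item On the product totalization $\prod_k\Hom_{\mathrm{cont}}(\mathbf A^{\hat\otimes k},\mathbf A)$ this filtration is not exhaustive: its union is only $\bigoplus_k$. No comparison theorem applies, and the conclusion is in fact false there. For $X=\mathbb P^1$, the map $\bar\partial\colon C^\infty(\mathbb P^1)/\CC\to\Omega^{0,1}(\mathbb P^1)$ is a topological isomorphism, so $\mathbf A/\CC$ is continuously contractible. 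The arity filtration is complete and exhaustive for $\prod$, and it then shows the product-totalized complex has cohomology $\CC$ in degree $0$. But $HH^1(\mathbb P^1)\supset H^0(T_{\mathbb P^1})\cong\mathfrak{sl}_2$.
\item You need the direct-sum (``second kind'') totalization, which is the one matching the contraderived description. There $F^p$ is exhaustive and vanishes for $p>\dim_\CC X$ in each total degree. The Eilenberg--Moore comparison theorem then applies, and your $E_2$ computation (bigraded super-HKR, then the Koszul complex pairing $\Sym^\bullet T^{0,1}$ with $\Wedge^\bullet T^{0,1}$) finishes the quasi-isomorphism.
\item The same choice must be carried through the $L_\infty$ step: take the formality statement for the direct-sum totalizations of polyvectors and polydifferential operators on $\mathcal M$. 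That the twisted linear term is a quasi-isomorphism again follows from this filtration, because the corrections $\mathcal U_{k+1}(Q,\dots,Q,-)$ strictly raise internal degree.
\end{itemize}

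\textbf{Further points to repair.}
\begin{itemize}
\item Continuous Hochschild cochains have no restriction maps, so ``both sides are fine sheaves, glue with a partition of unity'' is not available as stated. Localization has to come from a Connes-type argument for $C^\infty(\mathcal M)$, such as a continuous Koszul resolution of the diagonal. This is the real analytic content of your $E_1$ claim. It is also what makes $D_{\mathrm{poly}}(\mathcal M)\hookrightarrow HC_{\mathrm{cont}}(\mathbf A(X))$ a quasi-isomorphism; continuity of polydifferential operators is automatic.
\item Twisting by $Q$ lands in the Hochschild complex of $\mathbf A(X)$ only if the formality morphism sends $Q$ to $\bar\partial$ itself, i.e.\ $\mathcal U_n(Q,\dots,Q)=0$ for $n\ge 2$. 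This follows from the vanishing of Kontsevich's components on vector fields. It survives Dolgushev's twist because the Fedosov form is also vector-field valued. The same arity count shows all twisted sums are finite.
\item Homotopy transfer is unnecessary. The map $\alpha\otimes v_1\wedge\dots\wedge v_k\mapsto\alpha\,\iota_{v_1}\partial\wedge\dots\wedge\iota_{v_k}\partial$ is already a strict dg Lie morphism into $(T_{\mathrm{poly}}(\mathcal M),[Q,-])$. Indeed $[\iota_v\partial,\iota_w\partial]=\iota_{[v,w]}\partial$, and $[Q,\iota_v\partial]$ is the image of $\bar\partial v$.
\end{itemize}
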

In particular, the formal deformation theories of $X = (M, I)$ as a GC manifold and of $\mathbf A(X)$ as a complete locally convex cdg-algebra are equivalent. 

\begin{example}
    In the case of a complex torus $V / \Gamma$ with constant complex structure $I \colon V \to V$ the space of first order deformations is equal to
\begin{align} HH^2(V/\Gamma,I) = \Wedge^{0,2} V_{\mathbb C}^* \oplus (V_{\mathbb C}^*)^{0,1} \otimes V_{\mathbb C}^{1,0} \oplus \Wedge^{2,0} V_{\mathbb C}.\hfill\qedhere
\end{align}
\end{example}
\subsection{First order deformation theory of categories}   
Let $X = (M,I)$ be a complex manifold. In this subsection we will discuss the first order deformations of the derived category of $X$ and of its Dolbeault algebra (which is identified with deformations of $X$ as a GC manifold by Theorem \ref{CtrDolbeault}). 

The formal deformation theory of dg-category or stable $\infty$-categories is closely related to their Hochschild cohomology, however due to the curvature problem this correspondence is not precise, see \cite{LowenCurvature} and \cite{Blanc_2018} for discussion of the problem and partial results.
In general, Maurer-Cartan elements in Hochschild cochains naturally give formal deformations of dg-categories as curved $A_\infty$-categories. 

Although the arguments in this section are expected to carry over to formal deformations (using the existence of a formal deformation quantization of any Poisson manifold by \cite{kon}),
 we shall restrict ourselves to the first order setting, i.e.\  deformations linear over the dual numbers 
 \[\mathbb C_{\epsilon} := \mathbb C[\epsilon]/\epsilon^2.\]
 Thus we can explicitly compare to the deformed categories of coherent sheaves considered in \cite{Toda} in the case of first order deformations. 
 To that end, we consider a class
\[ [B] + \beta \in H^2(\mathcal O_X) \oplus \Gamma_{\mathcal O_X}(\Wedge^{2,0} TM). \]
(we will ignore $H^1(TX)$ as it corresponds to deformations of the complex structure).
To $\beta$ we associate a new sheaf of algebras $\mathcal O_X^{\beta}$. As a sheaf of vector spaces, it has the form
\[ \mathcal O_X^\beta = \mathbb C_{\epsilon} \otimes \mathcal O_X = \mathcal O_X \oplus \epsilon \mathcal O_X\]
with multiplication determined by
\[ f \cdot^{\beta} g = fg + \epsilon \pi \ii  \beta(\partial f, \partial g) \]
for $f,g \in \mathcal O_X \subset \mathcal O_X^\beta$.

The class $[B]$ itself does not twist $\mathcal O_X^\beta$ but rather its category of representations. To describe it, we use the isomorphism $H^2(\mathcal O_X) \cong H^2(\Omega^{0,*}(X))$ under which we represent $[B]$ as a \v Cech 2-cocycle $\check{B} \in \check C_{cl}^2(\mathcal O_X, \{U_i\}_{i \in I})$ with respect to an open good cover $\mathcal U = \{U_i\}_{i\in I}$, so that $\check B$ corresponds to a collection  $(\check{B}_{ijk} \in \mathcal O(U_{ijk}))$. Here, and in what follows, we use the standard notation $U_{i_1\dots i_k} = U_{i_1}\cap\dots\cap U_{i_k}$ for $i_1,\dots,i_k\in I$. Consider the image $\check H$ of $\epsilon \check{B}$ under the exponential map $\exp(2\pi \ii (-))$, i.e., $\check{H}_{ijk} = 1 + 2\pi \ii \epsilon \check{B}_{ijk}$. This defines a \v Cech cocycle in $\check{C}^2( (\mathcal O_X^\beta)^\times, \{U_i\}_{i \in I})$ where we note that $(\mathcal O_X^\beta)^\times = \mathcal O_X^\times \oplus \epsilon \mathcal O_X$.
\begin{definition}
    An \emph{$\check{H}$-twisted sheaf $M$ over $\mathcal O_X^\beta$} is a collection $\{M_i\}_{i \in I}$ where each $M_i$ is a sheaf over $\mathcal O_{U_i}^\beta$ and a collection of isomorphisms $\phi_{ij} \colon M_{i}|_{U_{ij}}  \to M_{j}|_{U_{ij}}$ which are linear over $\mathcal O_{U_{ij}}^\beta$ and satisfy the twisted cocycle condition
    \[ \phi_{jk} \phi_{ij} = \check{H}_{ijk} \phi_{ik}.\]
    The notion of maps between $\check{H}$-twisted sheaves is defined in the same fashion. For consistency, it is important to note that $\check{H}$ is central. Denote by $D_{\mathrm{perf}}(\mathcal O_X^\beta, \check{H})$ the subcategory of the derived category of all $\check{H}$-twisted sheaves which are locally quasi-isomorphic to perfect complexes, which is independent of the choices involved. 
\end{definition}
On the other hand, we can define the following global $\CC_\epsilon$-linear cdg-algebra:
\[ \mathbf A(X,\epsilon \beta, \epsilon B) := (\mathbb C_{\epsilon} \otimes \Omega^{0,*}(X), \bar \partial, \wedge^{\beta}, 2\pi \ii \epsilon B),\]
where 
\[ \omega \wedge^\beta \eta := \omega \wedge \eta + \epsilon \pi \ii \iota_{\beta}( \partial \omega \wedge \partial \eta ) \]
 for $\omega, \eta \in \Omega^{0,*}(X)$. Clearly, $\mathbf A(X,\epsilon \beta, \epsilon B)$ can naturally be promoted to a sheaf of (soft) cdg-algebras on $X$, which we denote as $\underline{\mathbf A(X,\epsilon \beta, \epsilon B)}$.
When $B = 0$, there is a natural inclusion of sheaves of algebras $\mathcal O_X^\beta \to \underline{\mathbf A(X,\epsilon \beta, 0)}$ inducing quasi-isomorphisms on stalks by the Dolbeault--Grothendieck Lemma. In fact, we have the following:
\begin{proposition}
    There is an equivalence of $\mathbb C_\epsilon$-linear categories
    \[ \Ho (\mathcal P_{\mathbf A(X,\epsilon \beta, \epsilon B)}) \to D_{\mathrm{perf}}(\mathcal O_X^\beta, \check{H}).\]
    \begin{proof}[Proof sketch]
        The full proof is essentially an application of the techniques and results in \cite{RHMC} and \cite[Section 7]{Antweiler24}. We thus focus on defining the functor on objects. Fix the following data:
        \[ f_{ij} \in C^\infty(U_{ij}), \quad \omega_{i} \in \Omega^{0,1}(U_i) \] such that
        \[ \check B = \check d(f), \quad \bar \partial f = \check d(\omega),\quad  \bar \partial(\omega_i) = B|_{U_i}\]
        where $\check d$ denotes the \v Cech differential.
        
        Given a cohesive module $P \in \mathcal P_{\mathcal A(X,\epsilon \beta, \epsilon B)}$ we can make it into a sheaf by
        \[ \underline{P} = \underline{\mathbf A(X,\epsilon \beta, \epsilon B)} \otimes_{\mathcal A(X,\epsilon \beta, \epsilon B)} P. \]
        This tensor-product  will automatically result in a sheaf because of the  softness of $C^\infty(X)$ and (graded) projectivity of $P$. Define a complex of twisted sheaves linear over $\mathcal O_X^\beta$ as follows. For $i \in I$, set
        \[ P_i := (\underline{P}|_{U_i}, d - \epsilon 2\pi \ii \omega_i)\]
        and define isomorphisms 
        \[ \phi_{ij} = 1 + \epsilon 2 \pi \ii f_{ij} \colon P_i|_{U_{ij}} \to P_j|_{U_{ij}}. \]
        Then by construction 
        \[
        \check d \phi = \check H
        \]
        and $P_i$ is cohesive over $\underline{\mathbf A(U_i,\epsilon \beta, 0)}$ and hence also defines a module over $\mathcal O_{U_i}^\beta$. One can show that $P_i$ is locally quasi-isomorphic to a perfect complex of $\mathcal O_{U_i}^\beta$-modules.
    \end{proof}
\end{proposition}

\subsection{Twisted coherent sheaves}
In the previous section we studied twisted sheaves only as first order deformations. As we will see here, this restriction is not necessary.
Let us consider the first summand $H^2(\mathcal O_X)$ of the space of first order deformations \eqref{eq:SecondHH}. 
Choosing a real closed 2-form $B$ whose $(0,2)$-part represents a given class $[B^{0,2}] \in H^2(\mathcal O_X)$, the deformed GC structure is given by
\[ \Jcal_{I}^{B} = \btrans{B } \Jcal_{I} \btrans{-B }.\]
Note that $\Jcal_{I}^{B}$ only depends on the $(0,2)$-part of $B$. The $2\pi \ii$-exponential 
\[
\exp(2\pi \ii (-)) \colon \mathcal O_X \to \mathcal O_X^*
\]
maps $B^{0,2}$ to a class $\tB \in H^2(\mathcal O_X^*)$. We will review here again the notion of twisted sheaf, this time non-infinitesimally. We can view $\tB$ as twisting the cocycle-condition for modules over $\mathcal O_X$. More precisely, choose an open cover $\{U_i\}_{i \in I}$ of $X$ for which we can view $\tilde B$ as a \v Cech 2-cocycle  $\tB \in \check C_{cl}^2(\mathcal O_X^*, \{U_i\}_{i \in I})$. 
A $\tB$-\emph{twisted sheaf} over $\mathcal O_X$ is then defined to be a collection $\{M_i\}_{i\in I}$ of sheaves of $\mathcal O_{U_i}$-modules together with isomorphisms 
\[
\phi_{ij} \colon M_i|_{U_{ij}} \to M_j|_{U_{ij}} 
\]
which fulfill the twisted cocycle condition 
\[
\phi_{jk}\phi_{ij} = \tB_{ijk} \phi_{ik}.
\]
Many notions defined for sheaves or complexes of sheaves carry over to the case of twisted sheaves. For example, a twisted sheaf is called \emph{coherent} if every $M_i$ is a coherent $\mathcal O_{U_i}$-module. In particular, we can consider the derived category $\Dbcoh(X,\tB)$ of complexes $\tB$-twisted sheaves with coherent cohomology, see e.g. \cite{Caldararu2002Elliptic}.
We propose to define the derived category of the GC structure $\Jcal_{I}^{B}$ as
\[ \D(X,\Jcal_{I}^{B}) :=  \Dbcoh(X,\tB). \]
By definition, changing $B$ by an integral 2-form, does not change the latter category (as it does not change $\tB$). It is also easy to see that the latter category only depends on the class $\tilde H$ and not on the choice of cover. 
\begin{definition}\label{definition:BDolbeault}
    Let $B^{0,2} \in \Omega^{0,2}(X)$ be a $\bar \partial$-closed $(0,2)$-form. We define the \emph{generalized Dolbeault algebra} of the GC structure $\Jcal_{I}^{B}$ as
    \[ \mathbf A(I,0,B) := (\Omega^{0,\bullet}(X),\wedge,\bar \partial, 2\pi \ii {B^{0,2}}). \hfill\qedhere\]
\end{definition}
    Note that $\mathbf A(I,0,B)$ is indeed a cdg-algebra as
    \[
    \bar \partial^2(\alpha) = 0 = 2\pi i({B^{0,2}} \wedge \alpha - \alpha \wedge {B^{0,2}} )\text{ for all } \alpha \in \mathbf A(I,0,B^{0,2})
    \]
    since $\wedge$ is graded-commutative.

The reason for this definition is given by the following theorem, mentioned in the introduction:
\begin{theorem}[\cite{BlockNC1}, \cite{Antweiler24},\cite{BlockDaenzer}]\label{theorem:derivedcat}
The homotopy category ${\rm Ho} \Pcal_{\mathbf{A}(I,0,B)}$ is equivalent to the bounded derived category $\Dbcoh(X,\tB)$ of complexes of \emph{$\exp(2\pi \ii B^{0,2})$-twisted sheaves} of $\Ocal_X$-modules with coherent cohomology.
\end{theorem}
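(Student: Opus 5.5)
The plan is to construct a dg functor from cohesive modules over $\mathbf A(I,0,B)$ to complexes of $\tB$-twisted sheaves. We then show it is fully faithful and essentially surjective on homotopy categories, following the untwisted argument of Block (Theorem \ref{th:block}) and the first-order proposition above.

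\textbf{Step 1: local trivialization of the curvature.} Choose a good Stein cover $\{U_i\}$ of $X$. Since $\bar\partial B^{0,2}=0$, the Dolbeault lemma gives $\omega_i\in\Omega^{0,1}(U_i)$ with $\bar\partial\omega_i=B^{0,2}|_{U_i}$. On overlaps, $\omega_j-\omega_i$ is $\bar\partial$-closed, so $\omega_j-\omega_i=\bar\partial f_{ij}$ for smooth $f_{ij}$ on $U_{ij}$. Then $f_{ij}+f_{jk}-f_{ik}=:\check B_{ijk}$ is holomorphic, and its image $\tB_{ijk}=\exp(2\pi\ii\check B_{ijk})$ is a \v Cech cocycle representing the class of $\tB$. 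This is the standard \v Cech--Dolbeault zigzag, and changing the choices changes $\tB$ by a coboundary.

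\textbf{Step 2: the functor.} Let $P$ be a cohesive module with superconnection $\mathbb E$, so $\mathbb E^2=2\pi\ii B^{0,2}$. Sheafify it as $\underline P=\underline{\Omega^{0,\bullet}}\otimes_{\Omega^{0,\bullet}(X)}P$, which uses softness. On $U_i$ set $\mathbb E_i=\mathbb E-2\pi\ii\omega_i$. Because $\omega_i$ is central of odd degree, $\mathbb E_i^2=\mathbb E^2-2\pi\ii\bar\partial\omega_i=0$. Hence $(\underline P|_{U_i},\mathbb E_i)$ is a flat cohesive module over $\Omega^{0,\bullet}(U_i)$. By Block's local analysis, it is quasi-isomorphic to a bounded complex of coherent $\mathcal O_{U_i}$-modules; concretely, one takes the $\mathbb E_i$-cohomology sheaves, or the complex of sheaves $\ker$ along the Dolbeault degree filtration. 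Multiplication by $\phi_{ij}=\exp(2\pi\ii f_{ij})$ intertwines $\mathbb E_i$ and $\mathbb E_j$, since $\mathbb E\circ e^{2\pi\ii f}=e^{2\pi\ii f}(\mathbb E+2\pi\ii\bar\partial f)$, and it satisfies $\phi_{jk}\phi_{ij}=\tB_{ijk}\phi_{ik}$. This yields a complex of $\tB$-twisted sheaves with coherent cohomology, and the construction is functorial in morphisms of cohesive modules.

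\textbf{Step 3: equivalence.} For full faithfulness, compare $\Hom_{\mathcal P}(P,Q)$ with $R\Hom$ of the twisted complexes. The twisting factors cancel in $\mathcal{H}om(P,Q)$, so $\Hom^\bullet(P,Q)$ is the global sections of an untwisted flat cohesive $\Omega^{0,\bullet}$-module, namely the sheaf of local homs with differential $[\mathbb E,-]$. Block's argument then identifies its cohomology with hypercohomology of $R\mathcal Hom$, using fineness of the sheaves involved. Alternatively, one can cite \cite{Antweiler24} for the curved version.

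For essential surjectivity, which I expect to be the main obstacle, take a twisted complex with coherent cohomology. Tensor with the smooth twisted "gauge" $e^{-2\pi\ii f_{ij}}$ to untwist it into an honest smooth object. Then run Block's construction: find a global bounded complex of smooth vector bundles (finitely generated projective $C^\infty(X)$-modules) resolving it, using compactness and the existence of smooth global resolutions of coherent twisted sheaves. Next, build the higher superconnection components $\mathbb E^{k}$ inductively by obstruction theory, killing obstructions with local acyclicity and partitions of unity; the curvature now appears as the extra term $2\pi\ii B^{0,2}$ in degree two. The delicate points are the existence of global finite projective resolutions in the twisted smooth setting and keeping the inductive perturbation bounded. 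For these I would follow \cite{BlockNC1}, Section 4, and \cite{BlockDaenzer} verbatim, checking only that the central curvature term commutes through every step of the construction.
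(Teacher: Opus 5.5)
Your proposal is correct and follows essentially the paper's route. The paper only cites this theorem, but its sketch of the first-order analogue over $\mathbb C_\epsilon$ builds the functor exactly as you do, via $(\underline P|_{U_i}, d-2\pi\ii\omega_i)$ glued by transition maps from $f_{ij}$ (your $\exp(2\pi\ii f_{ij})$ is the non-infinitesimal version of its $1+2\pi\ii\epsilon f_{ij}$), and it defers full faithfulness and essential surjectivity to \cite{BlockNC1}, \cite{RHMC}, \cite{Antweiler24}. One small correction to Step 2: you should not replace $(\underline P|_{U_i},\mathbb E_i)$ by its cohomology sheaves, which do not form a quasi-isomorphic complex, and you do not need to, since it is already a bounded complex of $\mathcal O_{U_i}$-modules whose cohomology sheaves are coherent by Block's local analysis.
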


\subsection{Non-commutative tori}
Let us now consider the last summand $H^0(\Wedge^2 TX)$ of the space of first order deformations \eqref{eq:SecondHH}.
By Theorem \ref{CtrDolbeault} any holomorphic Poisson bivector (corresponding to the last piece) thus yields a formal deformation quantization of $\mathcal A(X)$. We are interested in the case where the quantization is strict, rather than formal. This is possible for a torus $X = (V/\Gamma, I \colon V \to V)$. In that case we have 
\[ H^0(\Wedge^2 TX) = \Wedge^{2,0} V_{\mathbb C}.\]
To define a non-formal deformation quantization associated to a complex bivector, choose a real bivector $\beta \in \Wedge^2 V$ whose $(0,2)$-component equals the given complex bivector. 

Recall that Fourier duality induces an isomorphism of Fréchet spaces
\[ C^\infty(V/\Gamma) \cong \mathcal S(\Gamma^*) \]
where $\mathcal S(\Gamma^*)$ is the Schwartz space on $\Gamma^*$, i.e., the space of series $\sum_{\gamma \in \Gamma^*} \lambda_\gamma u_\gamma$ for which the coefficients $\lambda_\gamma$ decrease quicker than any polynomial. The isomorphism $\mathcal S(\Gamma^*) \to C^\infty(V/\Gamma)$ sends $u_\gamma$ to the function $x \mapsto u_{\gamma}(x) := e^{2\pi i \gamma(x)}$. Our constructions will be given in terms of the Fourier modes $u_\gamma$.

\begin{definition}
    We define the $\beta$-deformed Dolbeault algebra as 
\begin{equation*}
    \mathbf{A}(I, \beta, 0) = (C^\infty(V / \Gamma) \otimes \Wedge^{(0,\bullet)} V_{\mathbb C}^*, \star^{\beta}, \bar \partial_{I}, 0),
\end{equation*}
where 
\begin{equation} \label{eq:starformula} (u_{\gamma_1} \otimes \omega_1) \star^{\beta} (u_{\gamma_2} \otimes \omega_2) := e^{\pi \ii \beta(\gamma_1,\gamma_2)} u_{\gamma_1 + \gamma_2} \otimes \omega_1 \wedge \omega_2\end{equation}
    for $u_{\gamma}(x) := e^{2 \pi \ii \gamma(x)}$, $\gamma \in \Gamma^*$ and $\omega_i \in \Wedge^{(0,\bullet)} V_{\mathbb C}^*$.
\end{definition}
The algebras $\mathbf A(I,\beta,0)$ and $\mathbf A(I,0,B)$ were first considered in \cite{Block2006Mukai}.
We will also use the notation $C^\infty(V/\Gamma,\beta)$ for the smooth functions on $V/\Gamma$ whose product is deformed as
\begin{equation}
\label{eq:deformedproduct}
    u_{\gamma_1} \star^{\beta} u_{\gamma_2} := e^{\pi \ii \beta(\gamma_1,\gamma_2)} u_{\gamma_1 + \gamma_2}.
\end{equation}

We remark that $\beta$ in the formula for $\star^{\beta}$ needs to be real, otherwise the product would not be well-defined.

\begin{lemma}
    \label{lemmma:integralBeta}
    Up to isomorphism of algebras, $C^\infty(V/\Gamma,\beta)$ and $\mathbf{A}(I, \beta, 0)$ only depend on $[\beta] \in \Wedge^2 V / \Wedge^2 \Gamma$.
    \begin{proof}
        Let $\beta \in \Wedge^2 V$ and $\sigma \in \Wedge^2 \Gamma$. To construct an isomorphism 
        \[ \Phi \colon C^\infty(V/\Gamma,\beta) \to C^\infty(V/\Gamma, {\beta + \sigma}) \]
        we choose a $\mathbb Z$-basis of $\Gamma^*$, denoted $\alpha_1,...,\alpha_n$. Then there is a unique continuous algebra map $\Phi$ satisfying
        \[ \Phi(u_{\alpha_i}) = u_{\alpha_i} \quad \forall i = 1,...,n. \]
        More specifically it is given by
        \[ \Phi(u_{\gamma}) = \exp\left(\pi \ii \sum_{i < j} \sigma(\gamma_i \alpha_i, \gamma_j \alpha_j)\right) u_{\gamma} \] 
        where $\gamma = \sum_{i} \gamma_i \alpha_i$.
    \end{proof}
\end{lemma}

\subsection{Generalized Dolbeault algebra}
We now combine the two constructions in one definition:
\begin{definition}
\label{definition:deformeddolbeault}
Let $V$ be a real vector space together with a linear complex structure $I \colon V \to V$ and let $\Gamma \subset V$ be a lattice. Let
$\beta \in \Wedge^2 V$ be a real bivector and let $B \in \Wedge^2 V^*$ be a real 2-form. Then the $(I, \beta, B)$-\emph{generalized Dolbeault algebra} is defined to be
\begin{equation*}
    \mathbf{A}(I, \beta, B) = (C^\infty(V/\Gamma) \otimes \Wedge^\bullet (V_{\mathbb C}^*)^{0,1}, \star^\beta, \bar \partial_{I}, 2 \pi \ii B^{0,2}). \hfill\qedhere
\end{equation*}
\end{definition}
The cdg-algebras $\mathbf{A}(I,\beta,B)$ (and $\mathbf A(X,0,B)$ for arbitrary compact $X$ and $B$-field $B$) come equipped with certain functional-analytic packages, namely they are \emph{Fr\'echet-Fredholm}, see \cite[Definition 2.1.10]{NCGrauert}. 
We summarize some of the main results of \cite{NCGrauert} below. For the purpose of the present paper, the reader may skip the technical details. 
The main point is that the categories $\mathcal P_{\mathbf{A}(I,\beta,B)}$ are locally proper, i.e.\ the hom complexes are perfect over the ground field, 
and that there exists a well-defined defined Fourier-Mukai calculus:
\begin{theorem}[{\cite[Theorem 2.5.3]{NCGrauert}}]\label{thm:grauert}
    Let $A$ be a Fréchet-Fredholm cdg-algebra. Then the morphism complexes in $\mathcal P_{A}$ have finite-dimensional cohomology. More generally, let $A$, $B$, $C$ be Fréchet-Fredholm cdg-algebras. Then the tensor product over $B$ induces an $A_\infty$-functor 
     \[ \mathcal P_{A \hat \otimes B^{op}} \times \mathcal P_{B \hat \otimes C^{op}} \to \mathcal P_{A \hat \otimes C^{op}}.\]
\end{theorem}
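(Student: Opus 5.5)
The plan has two parts, matching the two assertions of the statement. Throughout, I work with the definition of Fréchet-Fredholm from \cite[Definition 2.1.10]{NCGrauert}, which I take to say two things. First, $A^0$ is a Fréchet algebra. Second, the differential of every ``linearized'' cohesive complex built out of $A$ has closed range and finite-dimensional cohomology, and this property persists under perturbations that raise the form degree. This is the abstract counterpart of ellipticity of $\bar\partial$ on a compact manifold.

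\textbf{Finite-dimensionality of morphism complexes.} Let $P,Q\in\mathcal P_A$. Since $P^\#$ is finitely generated and projective, $\Hom^\bullet_A(P,Q)$ is, as a graded $A^0$-module, a direct summand of finitely many shifted copies of $Q^\#$. It therefore inherits a Fréchet structure. Its differential $d(f)=d_Q f\mp f d_P$ splits, via the superconnection decomposition $\mathbb E=\mathbb E^0+\mathbb E^1+\dots$ of the earlier remark, into two pieces: a part of ``order one'' built from $\mathbb E^1_P,\mathbb E^1_Q$, and $A^0$-linear parts that raise form degree or are of degree zero.

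I would filter $\Hom^\bullet_A(P,Q)$ by form degree in $A$. This filtration is finite, since $A$ and the modules are bounded. The $E_1$-page is then computed by the associated graded operator, which is a twisted version of $d_A$ with coefficients in the finitely generated projective module $\Hom_{A^0}(E_P,E_Q)$. The Fredholm hypothesis gives that this page is finite-dimensional, with closed ranges. A finite spectral sequence then shows that the total cohomology is finite-dimensional. The closed-range statement is needed to pass from the filtration argument to the genuine complex, so I would track it at every step.

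\textbf{The tensor product functor.} For $P\in\mathcal P_{A\hat\otimes B^{op}}$ and $Q\in\mathcal P_{B\hat\otimes C^{op}}$, the completed tensor product $P\hat\otimes_B Q$ is a cdg-module over $A\hat\otimes C^{op}$, with differential $d_P\otimes1+1\otimes d_Q$. Its curvatures combine correctly because the $B$-curvature cancels across the balanced tensor product. However, it is not finitely generated and projective over $(A\hat\otimes C^{op})^\#$.

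The key step is a noncommutative Grauert direct image theorem. I would show that $P\hat\otimes_B Q$ is homotopy equivalent, in the category of quasi-cohesive modules, to a cohesive module. To do this:
\begin{enumerate}
\item Use the Fredholm property of $B$ relative to $A^0\hat\otimes C^{0,op}$ to show that the degree-zero part of the complex, relative over the parameters, has closed range and locally finite-rank cohomology.
\item Following Block's argument for perfectness of direct images, choose finitely generated projective $A^0\hat\otimes C^{0,op}$-modules that approximate this cohomology.
\item Build a cohesive superconnection on them by homological perturbation, correcting order by order in form degree.
\end{enumerate}

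Functoriality would then be obtained as follows. The quasi-cohesive modules whose objects are homotopy equivalent to cohesive ones form a dg category quasi-equivalent to $\mathcal P_{A\hat\otimes C^{op}}$. Composing the strict dg functor $(P,Q)\mapsto P\hat\otimes_B Q$ with an inverse of this quasi-equivalence yields the desired $A_\infty$-functor.

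I expect the main obstacle to be the Grauert-type step. It requires uniform closed-range estimates in families, so that the cohomology glues to a finitely generated projective module. I would attack it first in the case where $B$ has no curvature and then absorb the curvature as a degree-raising perturbation.
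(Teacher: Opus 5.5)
The paper gives no argument for this statement. It is quoted from \cite[Theorem 2.5.3]{NCGrauert}, and the only hint at the mechanism is the remark after the cohesive Morita corollary: a strict tensor functor on larger module categories, restricted to objects equivalent to cohesive ones, then replaced by an $A_\infty$-functor. Your final step matches this. The two substantive steps of your plan, however, fail as written.

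\textbf{The finite-dimensionality argument.} The form-degree filtration does not have the associated graded you claim. Write $\Hom_A(P,Q)=\Hom_{A^0}(E_P,A^\bullet\otimes_{A^0}E_Q)$. The component $\mathbb E^k$ raises form degree by exactly $k$, and the Leibniz term $d_A$ raises it by one. So the $E_0$-differential is $f\mapsto \mathbb E^0_Q f\mp f\,\mathbb E^0_P$, which contains no $d_A$; $d_A$ and $\mathbb E^1$ only enter in $d_1$. Hence $E_1$ is the $\mathbb E^0$-cohomology, and this is typically neither finite-dimensional nor finitely generated projective over $A^0$:
\begin{itemize}
\item for $P=Q=\mathbf A(X)$ one has $\mathbb E^0=0$ and $E_1=\Omega^{0,\bullet}(X)$, which is infinite-dimensional;
\item for a cohesive resolution of a skyscraper, $\mathbb E^0$ drops rank at a point and the $\mathbb E^0$-cohomology jumps there, so it is not projective.
\end{itemize}
So no Fredholm hypothesis about complexes of f.g.\ projective modules applies to $(E_1,d_1)$. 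No other filtration by form and internal degree can isolate $\mathbb E^1$ once some $\mathbb E^k$ with $k\ge 2$ is present, since $\mathbb E^k$ shifts these degrees by $k$ and $1-k$. Finiteness must come from a Fredholm property of the whole operator $\mathbb E^0+\mathbb E^1+\mathbb E^2+\cdots$ that is stable under \emph{all} $A^0$-linear perturbations, including the form-degree-preserving $\mathbb E^0$. In the Dolbeault case this is ellipticity, since every $\mathbb E^k$ with $k\neq 1$ has order zero. Your formulation only tolerates perturbations that raise form degree. Note also that the first assertion is the case $A=C=\mathbb C$ of the second, via $\Hom_B(P,Q)\cong P^\vee\hat\otimes_B Q$ as used in Section~\ref{sect:fukaya}. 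The same issue therefore reappears in your Grauert step with trivial parameters.

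\textbf{The Grauert step.} Here you aim at the wrong object. The relative cohomology of $P\hat\otimes_B Q$ over $A^0\hat\otimes (C^0)^{op}$ does not glue to a finitely generated projective module. Already for the classical Poincar\'e bundle of Example~\ref{example:commutativePC}, the fibrewise cohomology $H^\bullet(X,\mathcal P|_{X\times\{\xi\}})$ vanishes for $\xi\neq 0$ but not at $\xi=0$. What is needed is a continuous homotopy equivalence between the relative complex and a bounded \emph{complex} of f.g.\ projective $A^0\hat\otimes (C^0)^{op}$-modules. Roughly, one gets this by stabilization: enlarge by a f.g.\ projective module until the top differential is onto, pass to kernels, and descend. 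After that, your homological perturbation in form degree does apply.

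Moreover, ``locally'' has no meaning here. In the applications $A^0$ and $C^0$ are noncommutative tori, so the compactness and localization argument behind classical Grauert or Kuranishi must be replaced by a global one over a noncommutative Fr\'echet algebra. That requires a relative (family) Fredholm property of $B$ with coefficients in such algebras. Your step (1) invokes it, but your stated definition of Fr\'echet-Fredholm does not contain it. This relative stabilization is the substance of the imported noncommutative Grauert theorem; your plan names it as the main obstacle but does not supply it.
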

\begin{proposition}[{\cite[Lemma 4.1.1]{NCGrauert}}]\label{prop:locallyproper}
    The cdg-algebra $\mathbf{A}(I, \beta, B)$ is Fréchet-Fredholm.
\end{proposition}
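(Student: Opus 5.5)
The plan is to verify the axioms of \cite[Definition 2.1.10]{NCGrauert} directly on Fourier modes, using the isomorphism $C^\infty(V/\Gamma)\cong\mathcal S(\Gamma^*)$. Since I do not have that definition in front of me, I take it to consist of two parts. The first is a structural part: a complete locally convex (Fréchet, nuclear) cdg-algebra, bounded in degree, with continuous product, differential and curvature. The second is an analytic Fredholm-type part: the underlying differential admits a continuous parametrix whose error terms are smoothing, i.e.\ compact in a suitable sense.

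\emph{Structural part.} As a topological vector space, $\mathbf A(I,\beta,B)\cong\mathcal S(\Gamma^*)\otimes\Wedge^\bullet(V_\CC^*)^{0,1}$, a finite sum of copies of the nuclear Fréchet space $\mathcal S(\Gamma^*)$, concentrated in degrees $0,\dots,n$. Continuity of $\star^\beta$ follows from \eqref{eq:starformula}. The phase $e^{\pi\ii\beta(\gamma_1,\gamma_2)}$ has modulus one because $\beta$ is real. Combined with Peetre's inequality $(1+|\gamma_1+\gamma_2|)^k\le(1+|\gamma_1|)^k(1+|\gamma_2|)^k$, this shows that each Schwartz seminorm of a product is bounded by a product of seminorms, exactly as for the undeformed convolution. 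Associativity holds because $\beta$ is a bilinear cocycle. The differential acts diagonally on modes, $\bar\partial_I(u_\gamma\otimes\omega)=2\pi\ii\,u_\gamma\otimes\gamma^{0,1}\wedge\omega$, which is polynomially bounded in $\gamma$ and hence continuous. It is a derivation of $\star^\beta$ since $\gamma\mapsto\gamma^{0,1}$ is additive. The curvature $2\pi\ii B^{0,2}$ is a constant form, so it is central and $\bar\partial$-closed, and $\bar\partial^2=0=[2\pi\ii B^{0,2},-]$.

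\emph{Analytic part.} On each mode $\gamma\neq0$, the complex $(\Wedge^\bullet(V^*_\CC)^{0,1},\,2\pi\ii\gamma^{0,1}\wedge)$ is a Koszul complex and is therefore acyclic. An explicit contracting homotopy is $h_\gamma=(2\pi\ii|\gamma^{0,1}|^2)^{-1}\iota_{\overline{\gamma^{0,1}}}$, taken with respect to a fixed hermitian metric. Assembling the $h_\gamma$ for $\gamma\neq0$ and projecting onto the $\gamma=0$ mode yields a continuous operator $h$ that gains one order of decay. Then $1-(\bar\partial h+h\bar\partial)$ is the projection onto the finite-dimensional space $u_0\otimes\Wedge^\bullet$, which is a finite-rank (and in particular smoothing) operator.

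I expect the main obstacle to be extending this parametrix from $\mathbf A$ itself to free modules $\mathbf A^{\oplus r}$ equipped with an arbitrary superconnection $\mathbb E=\bar\partial+\mathbb E'$, where $\mathbb E'$ is given by right or left $\star^\beta$-multiplication by matrices of elements of $\mathbf A$. The approach I would take is to introduce the Sobolev-type scale of weighted $\ell^2$ norms on $\Gamma^*$. In that scale, left $\star^\beta$-multiplication by $u_\delta$ is a shift composed with a unimodular phase, so multiplication by any element of $\mathbf A$ is an operator of order $0$, while $h$ has order $-1$. A Neumann-series or perturbation argument then shows that $h$ remains a parametrix for $\mathbb E$ modulo compact, in fact smoothing, errors. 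Passing to direct summands handles the projective case. This gives finite-dimensional cohomology together with the splitting/parametrix condition required in \cite[Definition 2.1.10]{NCGrauert}.
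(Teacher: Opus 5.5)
\textbf{Verdict: genuine gap.} The paper gives no proof of this statement. It is imported from \cite[Lemma 4.1.1]{NCGrauert}, and neither that proof nor \cite[Definition 2.1.10]{NCGrauert} is reproduced. Your argument therefore depends entirely on your reconstruction of the definition. As written, you verify axioms you have guessed, not the ones the statement refers to.

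The computations themselves are sound:
\begin{itemize}
\item Continuity of $\star^\beta$ (unimodular phase plus Peetre), associativity, the derivation property, and centrality of $2\pi\ii B^{0,2}$ are all correct.
\item The mode-by-mode homotopy works. For real $\gamma\neq 0$ one has $\gamma^{0,1}\neq 0$, since $\gamma^{1,0}=\overline{\gamma^{0,1}}$, so $h_\gamma$ is defined and decays like $|\gamma|^{-1}$.
\item You get $\bar\partial h+h\bar\partial=1-\Pi_0$, where $\Pi_0$ is the projection onto $u_0\otimes\Wedge^\bullet(V^*_\CC)^{0,1}$.
\end{itemize}
Note that neither $\bar\partial$ nor $h$ sees $\beta$ or $B$. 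As a complex, $\mathbf{A}(I,\beta,B)$ is just the Dolbeault complex of $(V/\Gamma,I)$. The deformation enters only through the product, which you show acts by order-$0$ operators on the weighted $\ell^2$ scale. These are the ingredients one would expect an algebra-level condition of elliptic type to rest on.

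\textbf{Where the difficulty sits.} Your guess most likely misplaces it. In this paper, Theorem \ref{thm:grauert} \emph{derives} two things from the Fréchet-Fredholm property of $A$, $B$, $C$ taken separately:
\begin{itemize}
\item finite-dimensional Hom cohomology in $\mathcal P_A$;
\item a relative, Grauert-type statement for $\hat\otimes_B$ of cohesive bimodules.
\end{itemize}
So the property is presumably intrinsic to the algebra. The extension to modules with arbitrary superconnections, which you call the main obstacle, then belongs to that theorem, not to this lemma.

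Your module-level argument would also not suffice for the relative statement. It uses Rellich compactness of the order $-1$ error on Sobolev completions of $\mathbf{A}$. That compactness is lost over $\mathbf{A}\hat\otimes R$ for another infinite-dimensional Fréchet algebra $R$, because the error is then only $R$-linear.

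\textbf{Two smaller issues in the module part.}
\begin{itemize}
\item Write $D_0$ for the order-zero part of the Hom differential and $T=D_0h+hD_0-\Pi_0$. Then $T$ has order $-1$ but is not small in norm, so $\sum(-T)^k$ need not converge. What does work: $T$ commutes with $D$ (because $D^2=0$ on Hom complexes) and is compact on the degree-shifted Sobolev complex, with $H^{s-k}$ in degree $k$; you need that shift anyway for $D$ to be bounded. Alternatively, split off finitely many Fourier modes.
\item The unmodified $h$ leaves an order $-1$ error, not a smoothing one.
\end{itemize}

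To close the gap, take the actual \cite[Definition 2.1.10]{NCGrauert} and check your $h$ and your continuity estimates against it.
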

\begin{proposition}[{\cite[Lemma 4.1.2]{NCGrauert}}]
    The cdg-algebra $\mathbf{A}(I, \beta, B)$ is Fréchet twist-smooth. In particular, there exists a canonical cohesive module 
    \[
    \Delta \in \mathcal P_{\mathbf{A}(I, \beta, B) \hat \otimes \mathbf{A}(I, \beta, B)^{op}}
    \] 
    representing the identity functor 
    \[
    \mathcal P_{\mathbf{A}(I, \beta, B)} \to \mathcal P_{\mathbf{A}(I, \beta, B)}.
    \]
\end{proposition}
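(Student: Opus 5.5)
The plan is to build $\Delta$ explicitly as a cohesive resolution of the diagonal bimodule $\mathbf{A} := \mathbf{A}(I,\beta,B)$ over $\mathbf{A}^e := \mathbf{A}\hat\otimes \mathbf{A}^{op}$. We then check that $\Delta\otimes_{\mathbf{A}}(-)$ is quasi-isomorphic to the identity on $\mathcal P_{\mathbf{A}}$, using the tensor calculus of Theorem \ref{thm:grauert} together with Proposition \ref{prop:locallyproper}.

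\textbf{Reduction to a commutative problem.} First observe that $\mathbf{A}^e$ is again a generalized Dolbeault algebra, namely the one for the torus $(V\oplus V)/(\Gamma\oplus\Gamma)$ with data $(I\oplus I,\ \beta\oplus(-\beta),\ B\oplus(-B))$; passing to the opposite algebra flips the sign of the phase in \eqref{eq:starformula}. Consider the difference map $m\colon (x,y)\mapsto x-y$ and the pulled-back Fourier modes $m^*u_\gamma = u_\gamma\otimes u_{-\gamma}$, together with the pulled-back $(0,1)$-forms $d\bar z\otimes 1 - 1\otimes d\bar z$. A direct computation with \eqref{eq:starformula} should show the following: the phases $e^{\pi\ii\beta(\gamma,\gamma')}$ from the first factor cancel against the reversed phases from the $\mathbf{A}^{op}$ factor. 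Hence $m^*$ embeds the \emph{undeformed} Dolbeault algebra $\mathbf{A}(I,0,0)$ of $V/\Gamma$ as a subalgebra $\mathbf{S}\subset \mathbf{A}^e$. Moreover $\mathbf{A}^e$ is a (graded) free module over $\mathbf{S}$ generated by the modes of one factor. The curvature $2\pi\ii(B^{0,2}\otimes 1 - 1\otimes B^{0,2})$ restricts on the diagonal to zero.

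\textbf{Construction of $\Delta$.} By Theorem \ref{th:block}, the skyscraper $\mathcal O_0$ at the origin of the complex torus $(V/\Gamma,I)$ is represented by a cohesive module $K$ over $\mathbf{A}(I,0,0)$. Concretely, $K$ is a finite Koszul-type superconnection built from finitely generated projective $C^\infty(V/\Gamma)$-modules. I would set $\Delta := \mathbf{A}^e\otimes_{\mathbf{S}} K$. This is graded finitely generated projective because $\mathbf{A}^e$ is free over $\mathbf{S}$. The superconnection on $K$ extends to one on $\Delta$ via the Leibniz rule. The curvature identity must then be corrected by a term absorbing $2\pi\ii(B^{0,2}\otimes1-1\otimes B^{0,2})$. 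Since this element lies in the ideal generated by the forms $d\bar z\otimes1-1\otimes d\bar z$, which act through the Koszul part of $K$, it can be absorbed by adding an $\mathbf{A}^e$-linear component $\mathbb E^2$; I expect this to be a homotopy-perturbation argument in the spirit of the Remark following Definition \ref{definition:cdga}.

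\textbf{Representing the identity.} For any cohesive $P$ over $\mathbf{A}$, the exactness of Koszul complexes gives a quasi-isomorphism $\Delta\otimes_{\mathbf{A}}P\simeq \mathbf{A}\otimes_{\mathbf{A}}P = P$. To make this precise, I would compare $\Delta$ with the diagonal bimodule $\mathbf{A}$ via the augmentation $K\to \mathcal O_0$. Its cone has acyclic degree-zero part $(E,\mathbb E^0)$, hence vanishes in $\Ho\Pcal$ by the criterion in that same Remark. Naturality in $P$ then upgrades this to an $A_\infty$-natural equivalence.

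The main obstacle is the curvature correction in the second step, together with the functional-analytic control needed to verify the Fréchet twist-smoothness conditions of \cite{NCGrauert}. The perturbation series must converge in the Fréchet topology. This is where I would lean on translation invariance: all corrections act diagonally on Fourier modes with bounded phases, so Schwartz decay is preserved.
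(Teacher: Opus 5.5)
Your starting observation is correct. The elements $u_\gamma\otimes u_{-\gamma}$ and $\theta_i=d\bar z_i\otimes 1-1\otimes d\bar z_i$ generate a copy $\mathbf S$ of the undeformed Dolbeault algebra inside $\mathbf A^e$. The unit of $\mathbf A$ is $\mathbf S$-invariant (since $u_\gamma\star u_{-\gamma}=1$ and $\theta_i\cdot 1=0$), so $\mathbf A\cong\mathbf A^e\otimes_{\mathbf S}\CC_0$. The curvature lies in the ideal generated by the $\theta_i$. This is a genuinely different route from the paper, which gives no argument here and imports the statement from \cite[Lemma 4.1.2]{NCGrauert}.

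\textbf{The final step fails.} The criterion in the Remark following Definition~\ref{definition:cdga} applies only to cohesive modules. Neither $\mathbf A$ (as an $\mathbf A^e$-module) nor $\CC_0$ (as an $\mathbf S$-module) is cohesive; they are not even induced from degree-zero modules. So the criterion says nothing about $\mathrm{cone}(\Delta\to\mathbf A)$.

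Worse, the degree-zero parts are genuinely not quasi-isomorphic. For the Dolbeault algebra, $H(K,\mathbb K^0)\cong C^\infty\otimes_{\mathcal O}\mathcal O_0\cong\CC[[\bar z_1,\dots,\bar z_n]]$; locally this is $C^\infty/(z_1,\dots,z_n)C^\infty$, the formal antiholomorphic jets. Only $\mathbb K^1=\bar\partial$ cuts it down to $\CC$. Correspondingly, $(\Delta,\mathbb E^0)$ has cohomology $C^\infty(V/\Gamma)\hat\otimes\CC[[\bar t]]$ with $t$ normal to the diagonal, not $C^\infty(V/\Gamma)$. So the claim of an ``acyclic degree-zero cone'' is false, and exactness of Koszul complexes cannot by itself give $\Delta\hat\otimes_{\mathbf A}P\simeq P$.

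What actually has to be shown is that, for every cohesive $P$, a natural map $\Delta\hat\otimes_{\mathbf A}P\to P$ is a homotopy equivalence. This map must come from a cdg-map $\Delta\to\mathbf A$ that survives the curvature correction. Theorem~\ref{thm:grauert} makes the criterion available for this map. However, its degree-zero part involves the Dolbeault differential of the $\mathbf A^{op}$ factor. Checking it needs a relative formal Dolbeault lemma normal to the diagonal (a twisted convolution statement $\mathrm{add}_*(K\boxtimes P)\simeq P$), plus Fréchet control of completed tensor products. Together with the actual definition of Fréchet twist-smoothness, this is exactly what the paper takes from \cite{NCGrauert}, and your proposal does not supply it.

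\textbf{The curvature correction is only asserted}, but it can be completed and involves no convergence question. Write $h=2\pi\ii(B^{0,2}\otimes 1-1\otimes B^{0,2})=\sum_i\theta_i c_i$, with $c_i$ constant linear combinations of the forms $d\bar z_j\otimes 1+1\otimes d\bar z_j$. These are graded-central and $\bar\partial$-closed.

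\begin{itemize}
\item Multiplication by $d\bar z_i$ on $K$ is null-homotopic, because $H^1(X,\mathcal O_X)\to\mathrm{Ext}^1(\mathcal O_0,\mathcal O_0)$ factors through $H^1(X,\mathcal O_0)=0$.
\item Choose $\mathbf S$-linear $\kappa_i$ of degree $0$ with $[D_K,\kappa_i]=d\bar z_i$. Then $\mathbb E_1=\sum_i\kappa_i c_i$ satisfies $[D,\mathbb E_1]=h$.
\item The remaining obstructions are closed elements of $\mathrm{End}^{2-m}_{\mathbf S}(K)\otimes\Wedge^m\langle c_1,\dots,c_n\rangle$.
\item For $m\ge 3$ they are exact, since $\mathrm{Ext}^{<0}(\mathcal O_0,\mathcal O_0)=0$.
\item For $m=2$, the class of $[\kappa_j,\kappa_k]$ in $\mathrm{Ext}^0=\CC$ vanishes. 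This holds because the $\kappa_j^0$ act on the cyclic $C^\infty$-module $H(K,\mathbb K^0)$ by commuting multiplications.
\end{itemize}

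The procedure therefore terminates after finitely many steps, so your worry about Fréchet convergence of a perturbation series is moot.

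Finally, $\Delta$ is finitely generated projective by base change of $K^\#$. Freeness of $\mathbf A^e$ over $\mathbf S$ is not the reason, and that freeness holds only in a topological sense anyway.
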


\begin{definition}
    Let $A$ and $B$ be Fréchet-Fredholm cdg-algebras which are also Fréchet twist-smooth. 
    We call $A$ and $B$ \emph{cohesive Morita equivalent} if there exist cohesive bimodules $R \in \mathcal P_{A \hat \otimes B^{op}},S \in \mathcal P_{B \hat \otimes A^{op}}$ such that
    \[
    R \hat \otimes_B S \simeq \Delta_A
    \]
    and 
    \[
    S \hat \otimes_A R \simeq \Delta_B,
    \]
    where $\simeq$ denotes continuous homotopy equivalence.
\end{definition}
From Theorem \ref{thm:grauert} we obtain the following Corollary.

\begin{corollary}[\cite{NCGrauert}]
    Let $A$ and $B$ be Fréchet twist-smooth, Fréchet-Fredholm cdg-algebras. Suppose that $A$ and $B$ are cohesive Morita equivalent. Then their dg-categories of cohesive modules are $A_\infty$-quasi-equivalent. 
\end{corollary}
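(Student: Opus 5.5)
The plan is to deduce the statement formally from Theorem \ref{thm:grauert}, using the Fourier--Mukai calculus it provides to turn the bimodules $R$ and $S$ into mutually inverse functors.

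\emph{Step 1: the functors.} By Theorem \ref{thm:grauert}, applied with the triples $(A,A,B)$ and $(B,B,A)$, tensoring with the bimodules gives $A_\infty$-functors
\[ \Phi_S := S\hat\otimes_A(-)\colon \mathcal P_{A}\to\mathcal P_{B}, \qquad \Phi_R := R\hat\otimes_B(-)\colon \mathcal P_{B}\to\mathcal P_{A}. \]
To fit the statement literally, I view a left module over $A$ as an $A\hat\otimes\mathbb C^{op}$-bimodule, with $\mathbb C$ trivially Fréchet-Fredholm. I will check that $\mathbb C$ is admissible as the third algebra, or else rephrase the argument with the trivial algebra in the definitions of \cite{NCGrauert}.

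\emph{Step 2: associativity.} The main technical input is that the tensor product of Theorem \ref{thm:grauert} is associative up to natural homotopy equivalence, compatibly with the $A_\infty$-structure. Concretely, for $P\in\mathcal P_B$ I want
\[ \Phi_S\Phi_R(P)= S\hat\otimes_A(R\hat\otimes_B P)\simeq (S\hat\otimes_A R)\hat\otimes_B P. \]
By hypothesis $S\hat\otimes_A R\simeq\Delta_B$, and since $\Delta_B$ represents the identity functor, the right-hand side is $\simeq P$. For this I need two facts:
\begin{itemize}
\item tensoring with a continuous homotopy equivalence of bimodules gives a natural homotopy equivalence of functors, which holds because the tensor product is a dg-functor in each variable;
\item associativity of completed projective tensor products, which holds at the level of underlying Fréchet spaces, with the differentials matching.
\end{itemize}

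\emph{Step 3: conclusion.} Running the same argument symmetrically gives $\Phi_R\Phi_S\simeq\mathrm{id}$. Passing to $H^0$, the functors $\Ho\Phi_S$ and $\Ho\Phi_R$ are mutually inverse up to natural isomorphism. Hence $\Phi_S$ is essentially surjective on homotopy categories and induces quasi-isomorphisms on hom complexes. For the latter, the composite
\[ \Hom(P,Q)\to\Hom(\Phi_S P,\Phi_S Q)\to\Hom(\Phi_R\Phi_S P,\Phi_R\Phi_S Q)\simeq\Hom(P,Q) \]
is homotopic to the identity. Together with the symmetric statement, the two-out-of-six property then shows that both hom maps are quasi-isomorphisms. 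Therefore $\Phi_S$ is an $A_\infty$-quasi-equivalence.

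\emph{Main obstacle.} The hard step is Step 2: making the associativity and unit equivalences natural enough, at the $A_\infty$-level, to conclude quasi-fully-faithfulness. I would first try to establish these only on $H^0$ and on hom-cohomology, which is all Step 3 needs. I expect finite-dimensionality of cohomology, also from Theorem \ref{thm:grauert}, to make any functional-analytic completeness issues harmless there.
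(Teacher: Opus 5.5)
Your proposal is correct and takes the same route as the paper: the paper gives no separate proof and simply obtains the corollary from Theorem \ref{thm:grauert}, by letting tensoring with $S$ and $R$ produce $A_\infty$-functors that are mutually inverse on homotopy categories (via associativity and the fact that $\Delta$ represents the identity), which, as you note, only needs checking on $H^0$ and on hom-cohomology. The one slip is notational: in the convention of Theorem \ref{thm:grauert}, where one tensors over the middle algebra, the triples you need are $(B,A,\mathbb C)$ and $(A,B,\mathbb C)$, not $(A,A,B)$ and $(B,B,A)$.
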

\begin{remark}
    For the reader uncomfortable with $A_\infty$-functors we mention that it is possible to induce an equivalence of categories by a dg functor at the price of enlarging the category of cohesive modules.
    Any bimodule induces a natural functor between the larger contraderived categories of $A$ and $B$ as in the corollary.
    Under the assumptions of the corollary it restricts to the subcategories of modules weakly equivalent to cohesive modules.
    We have to replace it  by an $A_\infty$-functor to consider a functor between the categories of honest cohesive modules.  
\end{remark}

\begin{example}
    Let $\mathbf A(X), \mathbf A(Y)$ be the Dolbeault algebras of compact complex manifolds $X$ and $Y$. 
    Then we may translate from cohesive modules to elements of the bounded derived category. $\mathbf A(X)$ and $\mathbf A(Y)$ are cohesive Morita equivalent if and only if they are related by invertible Fourier-Mukai transforms, i.e.\ if there exist
    \[ R \in \Dbcoh(X \times Y),\quad  S \in \Dbcoh(Y \times X) \]
    such that 
    \[ \Delta_* (\mathcal O_X) \simeq R(p_{X \times X})_*( p_{X \times Y}^* R \otimes^L p_{Y \times X}^* S) \in \Dbcoh(X \times X),\]
    where 
    \begin{align*}
        p_{X \times X}\colon X \times Y \times X &\to X \times X\\ p_{X \times Y}\colon X \times Y \times X &\to X \times Y\\
        p_{Y \times X}\colon X \times Y \times X &\to Y \times X. 
    \end{align*} are the projections and
    \[ \Delta_* (\mathcal O_Y) \simeq R(q_{Y \times Y})_*( q_{Y \times X}^* S \otimes^L q_{X \times Y}^* R) \in \Dbcoh(Y \times Y).\]
    Here the projections $q_{(-)}$ are defined dually.
\end{example}

\section{Non-commutative Fourier-Mukai kernels}\label{section:Heisenberg}
\subsection{Heisenberg modules for smooth tori}
Let $V/\Gamma$ now be a real torus of even dimension and $\sigma \in  \Wedge^2 V$ a real translation invariant \emph{non-degenerate} bivector. We also let $\omega  := \sigma^{-1}$ be the corresponding translation invariant symplectic form. In this case, we will construct a canonical module over the real non-commutative torus $C^{\infty}(V/\Gamma,{\sigma})$ which is known as the \emph{Heisenberg module}. It will be finitely generated and projective. 

The Heisenberg Lie algebra $\mathfrak h$ is defined as the central extension
\[ 0 \to \mathbb C \to \mathfrak h \to V_{\mathbb C}^* \to 0 \] 
corresponding to $2\pi \ii\sigma$ with corresponding splitting $r \colon V^*_{\mathbb C} \to \mathfrak h$. 
For $v,w \in V^*$ we have by definition
\begin{align} \label{equation:cr} [r(v),r(w)] = 2\pi \ii \sigma(v,w).\end{align}
We choose complementary Lagrangian subspaces $L,L' \subset (V^*, \sigma)$ which yields the isomorphism 
\[
\rho \colon L' \to L^*,\quad  \rho(l') = \sigma (l')|_{L} \in L^*.
\]
Hence, we obtain an identification of symplectic spaces
\[ \Phi \colon (L \oplus L^*, \omega_{\langle \cdot, \cdot \rangle}) \cong (V^*, \sigma), \quad (l,\alpha) \mapsto l + \rho^{-1}(\alpha).\quad \]
where $\omega_{\langle \cdot, \cdot \rangle}((l,\alpha),(l',\alpha')) = \alpha(l') - \alpha'(l)$. The action of the Heisenberg Lie algebra $\mathfrak h$ on $\mathcal S(L)$ (the Schwartz space on $L$) is determined by
\begin{equation}\label{eq:action} r(\Phi(l,\alpha)).f(x) := 2\pi \ii \alpha(x) f(x) - \partial_{l} f(x) \end{equation}
and we will write 
\[(v^*).f := r(v^*).f\quad \text{ for } v^* \in V^*_{\mathbb C}.\]
\begin{definition}
    Define an action of the non-commutative torus $C^{\infty}(V/\Gamma,\sigma)$ on $\mathcal S(L)$ by setting
\[ (u_{\gamma} . g)(x) = e^{\gamma}g(x) = e^{\pi \ii \alpha(2x-l)} g(x-l) ,\]
for $L^* \oplus L \ni (\alpha,l ) = \gamma \in \Gamma^*$ and $u_{\gamma}(x) := e^{2\pi \ii \gamma(x)}$. See \cite[Remark 4.2.1]{NCGrauert} for an interpretation of the notation $e^{\gamma}g(x) = e^{2\pi \ii \alpha(2x-v)} g(x-v)$. 
It is not difficult to check that the formula is well-defined (and continuous). The resulting module is finitely generated projective over $C^{\infty}(V/\Gamma,\sigma)$ and we denote it by $H(\sigma)$.
\end{definition}

\subsubsection{Connections}
We define the (complex) non-commutative de-Rham algebra of $(V/\Gamma,\sigma)$ as the dg-algebra
\[ \Omega(V/\Gamma,\sigma) := (C^{\infty}(V/\Gamma) \otimes \Wedge^\bullet V_{\mathbb C}^*, \star^{\sigma}, d_{DR}, 0),\] 
where $\star^\sigma$ is given by the formula \ref{eq:starformula}.
Given a module $M$ over $C^{\infty}(V/\Gamma,\sigma)$, a \emph{connection} is given by a map of degree $1$
\[ \nabla \colon M \to \Omega^1(V/\Gamma,\sigma) \hat \otimes_{C^\infty(V/\Gamma,\sigma)} M \]
which fulfills the Leibniz rule. Note that $\nabla$ is flat, i.e. $\nabla^2 = 0 \colon  M \to \Omega^2(V/\Gamma,\sigma) \hat \otimes_{C^\infty(V/\Gamma,\sigma)} M$, if and only if $\Omega^\bullet(V/\Gamma,\sigma) \hat \otimes_{C^\infty(V/\Gamma,\sigma)} M$ defines a dg-module over $\Omega(V/\Gamma,\sigma)$.
More generally, we can consider the following curved algebra:
\begin{definition}
    The $(\sigma,B)$ deformed de-Rham algebra is the curved dg-algebra 
    \[ \Omega(V/\Gamma,\sigma,B) := (C^{\infty}(V/\Gamma) \otimes \Wedge^\bullet V_{\mathbb C}^*, \star^{\sigma}, d_{DR}, 2\pi \ii B). \qedhere \]
\end{definition}
We call a connection $\nabla$ on $M$ \emph{projectively flat} with curvature $B \in \Wedge^2 V^*_{\mathbb C}$ if $\nabla^2$ has the form
\[ \nabla^2(m) = 2\pi \ii B \otimes m.\]
In that case, $(\Omega^\bullet(V/\Gamma,\sigma) \hat \otimes_{C^\infty(V/\Gamma,\sigma)} M, \nabla)$ defines a cohesive module over the cdg-algebra $\Omega(V/\Gamma,\sigma,B)$.

\begin{lemma}
    The Heisenberg module $H(\sigma)$ carries a canonical projectively flat connection $\nabla$ with curvature 
    \[ \nabla^2 = -2\pi \ii \omega\]
    and hence $(\Omega^\bullet(V/\Gamma,\sigma) \hat \otimes_{C^\infty(V/\Gamma,\sigma)} H(\sigma), \nabla)$ is a cohesive module over the cdg-algebra $\Omega(V/\Gamma,\sigma,-\omega)$. 
    \begin{proof}
        We use the notation $\nabla_v(g) := \iota_v ( \nabla(g) )$ for $v \in V$. Then $\nabla$ is uniquely determined by defining
        \[ \nabla_v(g) := \omega (v).g. \]
        We verify the Leibniz rule using \eqref{eq:action}: 
        \begin{align*}
            \nabla_v(u_{\gamma}.g) &= \omega(v).(e^{\gamma}g) \\
        &= e^{\gamma}  \omega(v).g + [r(\omega(v)),r(\gamma)]. e^{\gamma}g 
        \\ 
        &= u_\gamma.(\nabla_v(g)) + 2\pi \ii \sigma( \omega(v), \gamma) u_\gamma.g\\
        &= u_\gamma.(\nabla_v(g)) + 2\pi \ii \gamma(v) u_\gamma.g \\
        &= u_\gamma.(\nabla_v(g)) + d_v(u_\gamma).g.
        \end{align*} 
        To get from the first to the second line one expands $\omega(v).(e^{\gamma}g)$ and uses the commutation relations between $r(\omega(v))$ and $r(\gamma)$.
        
        Now we compute the curvature:
        \begin{align*}
            [\nabla_v, \nabla_w](g) &= \omega (v).\omega (w).g - \omega (w).\omega (v).g \\
            &= [r(\omega (v)),r(\omega (w))].g\\
            &= 2\pi \ii \sigma( \omega (v), \omega (w) ).g\\
            &= -2\pi \ii \omega( v, w ).g. \qedhere
        \end{align*}
    \end{proof}
\end{lemma}
The constructed Heisenberg module is defined for $C^{\infty}(V/\Gamma,\sigma)$ where $\sigma$ is non-degenerate. However, in view of Lemma \ref{lemmma:integralBeta}, it also defines a module over $C^{\infty}(V/\Gamma,\beta)$  if $\beta\in \Wedge^2 V$ satisfies \[\beta \equiv \sigma \mod \Wedge^2 \Gamma.\]
\begin{example}
\label{example:commutativePC}
    Let $V/\Gamma$ be a torus and $V^* / \Gamma^*$ its dual torus. Smooth complex line bundles on $V/\Gamma \times V^* / \Gamma^*$ are parametrized by
    \[ H^2(V/\Gamma \times V^* / \Gamma^*, \mathbb Z) = \Wedge^2 (\Gamma^* \oplus \Gamma) \cong  (\Gamma^* \otimes \Gamma) \oplus \Wedge^2 \Gamma \oplus \Wedge^2 \Gamma^*.\]
    The canonical pairing between $\Gamma^*$ and $\Gamma$ yields an element 
    \[ -\omega \in \Gamma^* \otimes \Gamma\]
    so that $-\iota_\gamma(\omega) = \gamma$ for $\gamma \in \Gamma$.
    The corresponding line bundle with curvature $-2 \pi \ii \omega$ is known as the \emph{Poincaré line bundle}. As a map $\Gamma \oplus \Gamma^* \to (\Gamma \oplus \Gamma^*)^* = \Gamma^* \oplus \Gamma$, $\omega$ has the form
    \[  -\omega  = \begin{pmatrix}
 0 & -1 \\ 1 & 0
     \end{pmatrix}\]
     and so the corresponding bivector $\sigma := \omega^{-1}$ is of the form
     \[
     \sigma  = \begin{pmatrix} 0 & -1 \\ 1 & 0
     \end{pmatrix},
     \]
     which lies on the lattice $\Wedge^2 (\Gamma \oplus \Gamma^*)$, hence $C^{\infty}(V/\Gamma \times V^* / \Gamma^*,\sigma) \cong C^{\infty}(V/\Gamma \times V^* / \Gamma^*)$ and the Heisenberg module $H(\sigma)$ is isomorphic to (the global sections of) the  Poincaré line bundle.
\end{example}
\subsubsection{Bimodules} 
Suppose we are given a bivector $\beta \in \Wedge^2 V$ for a torus $V/\Gamma$ and a bivector $\tbeta \in \Wedge^2 V^*$ for its dual torus $V^* / \Gamma^*$. We are interested in left modules over 
\[ C^\infty(V/\Gamma,\beta) \hat \otimes C^\infty(V^*/\Gamma^*,{\tbeta})^{op}\cong  C^\infty(V/\Gamma \times V^*/\Gamma^*, \beta \otimes 1 - 1 \otimes \tbeta) \cong C^\infty((V\oplus V^*)/(\Gamma \oplus\Gamma^*),\sigma_0).\] 
The symbol $\hat \otimes$ denotes the completed projective tensor product. The bivector $\sigma_0 = \beta \otimes 1 - 1 \otimes \tbeta$  may then be written in the form
\[ \sigma_0 = \begin{pmatrix}
 \beta & 0 \\ 0 & -\tbeta
     \end{pmatrix}\colon (V\oplus V^*)^*= V^*\oplus V\to V\oplus V^*.\]

Attempting to generalize commutative Poincaré line bundles as in Example \ref{example:commutativePC}, we modify $\sigma_0$ by an integral bivector to obtain
\[ \sigma := \begin{pmatrix}
 \beta & -1 \\ 1 & -\tbeta
     \end{pmatrix}. \]
If $\sigma$ is invertible, we obtain the Heisenberg module $H(\sigma)$ for the algebra $C^\infty(V/\Gamma \times V^*/\Gamma^*,{\sigma}) \cong C^\infty(V/\Gamma \times V^*/\Gamma^*, \beta \otimes 1 - 1 \otimes \tbeta)$
with curvature $-2\pi \ii \sigma^{-1}$ hence defining a cdg-module over  $\Omega(V/\Gamma \times V^* / \Gamma^*,\sigma, -\sigma^{-1})$. The inverse of $\sigma$ is given by
\begin{align} \label{equation:sigma} -\sigma^{-1} = \begin{pmatrix}
                \tbeta(1-\beta \tbeta)^{-1}  & -(1-\tbeta \beta)^{-1}
                \\ (1-\beta \tbeta)^{-1} & -\beta(1-\tbeta \beta)^{-1}
            \end{pmatrix} \end{align}
and in particular, invertibility of $\sigma$ is equivalent to $1-\beta \tbeta \colon V \to V$ being invertible.
\subsection{Non-commutative Poincaré line bundles}
Recall we constructed the Heisenberg module $H(\sigma)$ for $\sigma$ a given non-degenerate bivector and constructed a connection making it into a cohesive module over the twisted de-Rham algebra
\[ \Omega(V/\Gamma,\sigma,B) := (C^{\infty}(V/\Gamma) \otimes \Wedge^\bullet V_{\mathbb C}^*, \star^\beta, d_{DR}, 2\pi \ii B).\]
If $I$ is any complex structure on $V$, the projection $V_{\mathbb C}^* \to (V^*_{\mathbb C})^{0,1}$ induces a map of cdg-algebras 
\begin{align} \label{equation:extensions}\Omega(V/\Gamma,\sigma,B) \to \mathbf{A}(I, \sigma, B) = (C^{\infty}(V/\Gamma) \otimes \Wedge^{(0,\bullet)} V_{\mathbb C}^* ,\star^{\sigma}, \bar \partial, 2\pi \ii B^{0,2}) \end{align}
and hence an extension of scalars functor between cohesive modules.
\begin{example}
    Let us explain the role of the de-Rham algebra in the untwisted, commutative case. For any smooth manifold, the Higher Riemann-Hilbert correspondence \cite{RHMC} establishes an equivalence
    \[ D_{\mathrm{fclc}}(\underline{\mathbb C}) \cong H^0( \mathcal P_{\Omega(M)} ) \]
    where $D_{\mathrm{fclc}}(\underline{\mathbb C})$ denotes the subcategory of the derived category of complexes of sheaves of vector spaces whose cohomology sheaves are locally finitely generated and locally constant. If $X = (M,I)$ is a complex manifold then the unit $\underline{\mathbb C} \to \mathcal O_X$ induces a functor $D_{\mathrm{fclc}}(\underline{\mathbb C}) \to \Dbcoh(\mathcal O_X)$ which agrees with the functor on the level of cohesive modules induced by the map of dg-algebras  $\Omega(M) \to \mathbf A(X)$. In the curved and non-commutative case, $\mathcal P_{\Omega(V/\Gamma,\sigma,B)}$ should thus be thought of as a higher-categorical version of twisted non-commutative locally constant sheaves. 
\end{example}

We would like to use Heisenberg modules to define non-commutative Fourier--Mukai kernels between a torus $V/\Gamma$ and its dual torus $V^*/\Gamma^*$ equipped with complex structures $I\in\C_V, \tilde I\in\C_{V^*}$, 2-forms $B\in\Wedge^2V^*,\tB\in\Wedge^2V$ and bivectors $\beta\in\Wedge^2 V,\tbeta\in\Wedge^2V^*$, respectively. . Thus, we consider the case where the RHS of \eqref{equation:extensions} has the product form:
\begin{align} &\mathbf{A}(I, \beta, B) \hat \otimes \mathbf{A}(\tilde I, \tbeta, \tB)^{op}.
\end{align}
We see that $\sigma_0 \in \Wedge^2(V\oplus V^*)$ and $\underline{B} \in \Wedge^2(V\oplus V^*)^* = \Wedge^2(V^*\oplus V)$ as on the left side of \eqref{equation:extensions} must take the form 
\[ \sigma_0  = \begin{pmatrix}
                \beta   & 0
                \\ 0 & -\tbeta 
            \end{pmatrix}\colon (V\oplus V^*)^*= V^*\oplus V\to V\oplus V^*\]
and 
        \[ \underline{B} = \begin{pmatrix}
                B  + B_{\Delta}   &  -\alpha^*
                \\ \alpha & -\tilde{B} - \tB_{\Delta} 
            \end{pmatrix} \colon  V\oplus V^*\to  (V\oplus V^*)^*= V^*\oplus V \]
with $\alpha \colon V\to V^*$ being of type $(1,1)$ for $I \times \tilde{I}$, i.e.,
\begin{align} \alpha I = -\tilde{I}^* \alpha, \end{align}
$B_{\Delta} \in\Wedge^2V^*$ of type $(1,1)$ for $I$, and $\tB_{\Delta} \in\Wedge^2V$ of type $(1,1)$ for $\tilde I$. If $(1-\beta\tbeta)$ is invertible, we may proceed as in \S 6.1.2, and obtain the Heisenberg module $H(\sigma)$ associated with the non-degenerate bivector 
\[
\sigma = \begin{pmatrix}
    \beta & -1 \\ 1 & \tbeta
\end{pmatrix}.
\] 
Comparison with \eqref{equation:sigma} shows that $\underline{B}$ must then be given by
\begin{align*} \underline{B} = - (\sigma )^{-1} = \begin{pmatrix}
                \tbeta (1-\beta  \tbeta )^{-1}  & (1-\tbeta  \beta )^{-1}
                \\ (1-\beta  \tbeta )^{-1} & -\beta (1-\tbeta  \beta )^{-1}
            \end{pmatrix}.\end{align*}
In particular, comparing the lower left entries in these formulas for $\underline{B}$, we find that $\alpha = (1-\beta  \tbeta)^{-1}$ is invertible and hence the complex structure $\tilde{I}$ is determined by
\begin{align}
    \tilde{I} = -(\alpha^*)^{-1} I^* \alpha^* = -(1-\tbeta \beta ) I^* (1-\tbeta \beta )^{-1} \colon V^* \to V^*.
\end{align}
Note that if $\beta = 0$ or $\tbeta = 0$, one obtains $\tilde{I} = -I^*$, making $(V^* / \Gamma^*, \tilde{I})$ into the dual complex torus of $(V / \Gamma, I)$.
We summarise our construction in the following Proposition:

\begin{proposition}\label{proposition:bimodules}
    Let $\beta \in \Wedge^2 V$, $\tbeta \in \Wedge^2 V^*$ such that $1 - \beta  \tbeta  \colon V \to V$ is invertible. Let $I\colon V\to V$ be a complex structure. Then there exists a cohesive Heisenberg bimodule $P$, which we call the \emph{non-commutative Poincaré line bundle}, over 
    \[ \mathbf{A}(I, \beta, B) \hat \otimes \mathbf{A}(\tilde I, \tbeta, \tB)^{op}\]
    if 
    \[
    \betatrans{\beta } \btrans{B } L_{\Jcal_I}
    \]
    is T-dual to 
    \[
    \betatrans{\tbeta } \btrans{\tB } L_{\Jcal_{\tilde I^*}}
    \]
    via $\varphi$ (see \ref{sec:Tdualitypoint}), i.e., if 
    \[
    \varphi\left(\betatrans{\beta } \btrans{B } L_{\Jcal_I} \right) = \betatrans{\tbeta } \btrans{\tB } L_{\Jcal_{\tilde I^*}} 
    \]
    holds.
    \begin{proof}
        From the computation of the curvature, we see that $P$ defines a cohesive module over \\$\mathbf{A}(I, \beta, B) \hat \otimes \mathbf{A}(\tilde I, \tbeta, \tB)^{op}$ if the equations
        \begin{align*}
            &\tilde{I} = -(1-\tbeta  \beta ) I^* (1-\tbeta  \beta )^{-1} \\
            &B^{(0,2)_I} = (\tbeta  (1-\beta  \tbeta )^{-1})^{(0,2)_I} \\
            &\tilde{B}^{(0,2)_{\tilde{I}}} = (\beta  (1-\tbeta  \beta )^{-1})^{(0,2)_{\tilde{I}}}
        \end{align*}
        hold. These are equivalent to 
        \[
            \varphi\left(\betatrans{\beta } \btrans{B } L_{\Jcal_I} \right) = \betatrans{\tbeta } \btrans{\tB } L_{\Jcal_{\tilde I^*}}
        \]
        by Lemma \ref{lemma:Tdualflip}.
    \end{proof}
\end{proposition}

The non-commutative Poincaré line bundles are essential for us, as they induce equivalences:
\begin{theorem}[\cite{NCGrauert}]\label{theorem:inducedequivalences}
Let $(V/\Gamma, I, \beta, B)$ and $(V^*/\Gamma^*, \tilde I, \tbeta, \tB)$ be such that 
\[ \betatrans{\beta } \btrans{B } L_{\Jcal_I}\] and \[\betatrans{\tbeta } \btrans{\tB } L_{\Jcal_{\tilde I}}\] are T-dual via $\varphi$. Moreover, suppose that 
\[ 
1-\tbeta  \beta  \colon V^* \to V^*
\]
is invertible.
Then $\mathbf{A}(I, \beta, B)$ is cohesive Morita equivalent to $\mathbf{A}(\tilde I, \tbeta, \tB)$.
In particular, the non-commutative Poincaré line bundle induces an $A_\infty$-quasi-equivalence
\[ \mathcal P_{\mathbf{A}(I, \beta, B)} \xrightarrow{\sim} \mathcal P_{\mathbf{A}(\tilde I, \tbeta, \tB)} .\]
\begin{proof}
    In Proposition \ref{proposition:bimodules}, we showed that the existence of the non-commutative Poincaré line bundles is equivalent to T-duality 
    \[
    \varphi\left(\betatrans{\beta } \btrans{B } L_{\Jcal_I} \right) = \betatrans{\tbeta } \btrans{\tB } L_{\Jcal_{\tilde I^*}}.
    \]
    In \cite[Theorem 4.3.11]{NCGrauert} it is proved that the non-commutative Poincaré line bundles induce equivalences.
\end{proof}
\end{theorem}
\begin{remark}
    As a consequence of Theorem \ref{theorem:welldefined}, we will see that the statement of the theorem also holds if $1-\beta  \tbeta $ is singular but the equivalence bimodule may be more complicated.
\end{remark}
\begin{corollary}[{\cite[Corollary 4.3.13]{NCGrauert}}]
    Let $(V/\Gamma,I)$ be a complex torus and let $\sigma \in \Wedge^{2,0} V$. Choose a real bivector $\beta \in \Wedge^2 V$ such that $(\beta)^{2,0} = \sigma$. Then $\mathcal P_{\mathbf A(I,\beta,0)}$ does not depend on the choice of $\beta$ up to $A_\infty$-quasi-equivalence.
\end{corollary}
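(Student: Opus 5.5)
The plan is to compare both choices of $\beta$ to one and the same algebra on the dual torus, using Theorem \ref{theorem:inducedequivalences} twice.

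First, the ambiguity is of type $(1,1)$. Let $\beta,\beta'\in\Wedge^2V$ be real with $\beta^{2,0}=\beta'^{2,0}=\sigma$, and put $\tau=\beta'-\beta$. Then $\tau$ is real with $\tau^{2,0}=0$. Since $\tau^{0,2}=\overline{\tau^{2,0}}$, also $\tau^{0,2}=0$, so $\tau\in\Wedge^{1,1}_IV$. By the uniqueness proposition of Section \ref{sec:GC}, item (3), $\betatrans{\tau}\Jcal_I\betatrans{-\tau}=\Jcal_I$. Hence $(I,\beta,0)$ and $(I,\beta',0)$ present the same GC structure, which is the heuristic reason to expect the claim.

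Second, I would take as common T-dual on $V^*/\Gamma^*$ the data
\[
\tilde I=-I^*,\qquad \tbeta=0,\qquad \tB=\beta,
\]
where $\beta$ is regarded as a 2-form on $V^*$. With $\tbeta=0$ we have $1-\tbeta\beta=1$ and $\alpha=1$. The conditions of Lemma \ref{lemma:Tdualflip} then reduce to three statements. The first is $\tilde I=-I^*$, which holds by choice. The second is $B^{(0,2)_I}=0$, which holds since $B=0$. The third is $\tB^{(0,2)_{\tilde I}}=\beta^{(0,2)_{\tilde I}}$, which holds tautologically.

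So $\betatrans{\beta}L_{\Jcal_I}$ is T-dual to $\btrans{\tB}L_{\Jcal_{\tilde I}}$, and $1-\tbeta\beta$ is invertible. Theorem \ref{theorem:inducedequivalences} therefore gives an $A_\infty$-quasi-equivalence
\[
\mathcal P_{\mathbf A(I,\beta,0)}\simeq\mathcal P_{\mathbf A(-I^*,0,\beta)}.
\]
In the same way,
\[
\mathcal P_{\mathbf A(I,\beta',0)}\simeq\mathcal P_{\mathbf A(-I^*,0,\beta')}.
\]

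Third, I compare the two dual algebras. $\mathbf A(-I^*,0,\beta)$ is the commutative Dolbeault algebra of the dual complex torus with curvature $2\pi\ii\,\beta^{0,2}$, the $(0,2)$-part taken with respect to $-I^*$ on $V^*$. Note that $\tau$, viewed as a 2-form on $V^*$, is still of type $(1,1)$ for $-I^*$. This is because the type decomposition of $\Wedge^2V$ with respect to $I$ corresponds to that of $\Wedge^2(V^*)^*$ with respect to $-I^*$, up to swapping $(2,0)$ and $(0,2)$; the $(1,1)$ part is preserved. Hence $\beta^{(0,2)}=\beta'^{(0,2)}$ with respect to $-I^*$. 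The two cdg-algebras then coincide on the nose: the product is the undeformed wedge product, the differential is the same $\bar\partial$, and the curvature elements are equal. Composing the quasi-equivalence for $\beta$ with the inverse of the one for $\beta'$ gives
\[
\mathcal P_{\mathbf A(I,\beta,0)}\simeq\mathcal P_{\mathbf A(I,\beta',0)}.
\]

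The main obstacle is bookkeeping rather than substance. One must check that Lemma \ref{lemma:Tdualflip} and Proposition \ref{proposition:bimodules} are being applied with consistent conventions, since the latter is phrased with $L_{\Jcal_{\tilde I^*}}$. One must also confirm that the type of a bivector on $V$, reread as a form on $V^*$ with complex structure $-I^*$, behaves as claimed, in particular that $(1,1)$ goes to $(1,1)$. I would verify the latter directly in a basis adapted to $I$, where $-I^*$ has the dual eigenspaces.
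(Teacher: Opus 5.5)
Your argument is correct and is essentially the intended one. Taking $\tbeta=0$ makes the invertibility hypothesis of Theorem \ref{theorem:inducedequivalences} automatic, and it identifies $\mathcal P_{\mathbf A(I,\beta,0)}$ with the commutative gerby dual $\mathcal P_{\mathbf A(-I^*,0,\beta)}$. Its curvature is $2\pi\ii\,\beta^{(0,2)_{-I^*}}=2\pi\ii\,\beta^{(2,0)_I}=2\pi\ii\,\sigma$, since the $(1,0)$-forms on $(V^*,-I^*)$ are exactly $V^{0,1}_I$; this confirms your bookkeeping and shows the curvature depends only on $\sigma$. This is the special case $B_1=B_2=0$, $\Delta=0$, $\tbeta=0$ of the common-T-dual argument in Theorem \ref{theorem:welldefined}.
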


\section{The derived category of a generalized complex torus}\label{sec:gencat}
Recall that we defined the generalized Dolbeault algebra $\mathbf{A}(I,\beta,B)$ in Definition \ref{definition:deformeddolbeault}. 

\begin{definition}\label{definition:gencategory}
Let $\Jcal$ be a translation-invariant GC structure on $V/\Gamma$ of even cotype. Choose $I$, $\beta$ and $B$ such that 
\[
    L_{\Jcal} =\betatrans{\beta }\btrans{B } L_{\Jcal_I}.
\]
The \emph{derived category of $\Jcal$} is defined as 
\[ \Dcal(\Jcal) = \Pcal_{\mathbf{A}(I,\beta,B)}.\]
If $\Jcal$ is of odd cotype we let $\Jcal'$ be the GC structure obtained by performing integral T-duality relative to any circle and define 
\[
    \D(\Jcal) := \Dcal(\Jcal'). \qedhere
\]
\end{definition}
\begin{remark}
By construction $\Dcal(\Jcal)$ is a pre-triangulated dg-category, or equivalently a stable $\infty$-category linear over $\CC$. We may disregard some structure and consider its homotopy category as a triangulated category.
\end{remark}
We note that such $\beta, B, I$ always exist by Proposition \ref{proposition:B_then_beta_eventype}.
The goal of this section is to show that the definition does not depend on the choices. Moreover, we show that $\D(\Jcal)$ satisfies the desiderata described in the introduction.
\begin{example}\label{eg:dolbeault}
    Let $\Jcal$ be the GC structure associated to the complex torus $(X,J)$. Then 
    \[
    \Ho \D(\Jcal) = \Ho \mathcal P_{\mathbf A(J)} \cong \Dbcoh(X)
    \] by Theorem \ref{theorem:derivedcat}.
\end{example}
\begin{example}\label{eg:dolbeaultcurved}
    Let $\Jcal$ be the GC structure associated to the complex torus $(X,J)$ equipped with a gerbe $B$. Then 
    \[
    \Ho \D(\Jcal) = \Ho \mathcal P_{\mathbf A(J,0,B)} \cong \Dbcoh(X, B)
    \] by Theorem \ref{theorem:derivedcat}.
\end{example}
\begin{example}\label{eg:fukaya} 
Let $\Jcal_\omega$ be the GC structure associated to a symplectic manifold $(M,\omega)$.
As in Example \ref{example:Hyperkähler}
we choose an auxiliary Hyperkähler structure
and find 
  \begin{equation}
            \Jcal_{\omega_J} = \betatrans{-\omega_K^{-1}}\btrans{\frac{\omega _K}{2}}\Jcal_I\btrans{\frac{-\omega _K}{2}}\betatrans{\omega_K^{-1}}.\hfill
        \end{equation}
Thus the generalized Dolbeault algebra of $\Jcal_\omega$ is
    \[
    \mathbf{A}(\omega) := \mathbf{A}\left(J,-\omega_K^{-1}, \frac{1}{2}\omega_K\right)
    \]
    We will show in section \ref{sect:fukaya} that as a consequence of homological mirror symmetry $\D(\Jcal_\omega)$ is equivalent to a suitable Fukaya category of $(M, \omega)$. 
    \end{example}
\begin{remark}
    By Proposition \ref{prop:locallyproper}, $\Dcal(\Jcal)$ is a locally proper category but it does in general not have a compact generator. This is already evident for the derived category of a generic complex torus.
\end{remark}    
\subsection{Well-definedness}\label{sect:welldefined}
Let $\Jcal$ be an invariant GC structure on $V/\Gamma$, and let $\beta_1,B_1,I_1$ and $\beta_2,B_2,I_2$  be such that 
\[L_{\Jcal} = \betatrans{\beta_1 }\btrans{B_1 }L_{\Jcal_{I_1}} = \betatrans{\beta_2 }\btrans{B_2 }L_{\Jcal_{I_2}}.\] 
To show that $\Dcal(\Jcal)$ is well-defined, we will prove that there exists a presentation 
\[ \betatrans{\tbeta } \btrans{\tB } L_{\Jcal_{\tilde I}} = \varphi(L_{\Jcal}) \]
of the T-dual of $\Jcal$, such that for both $(I_1,\beta_1,B_1)$ and $(I_2,\beta_2,B_2)$ the invertibility condition of Theorem \ref{theorem:inducedequivalences} is met. Thus by composing $A_\infty$-functors, we obtain an $A_\infty$-quasi-equivalence between $\mathcal P_{\mathbf{A}(I_1,\beta_1,B_1)}$ and $\mathcal P_{\mathbf{A}(I_2,\beta_2,B_2)}$.
\begin{theorem}\label{theorem:welldefined}
    If 
    \[
    L_{\Jcal} = \betatrans{\beta_1 } \btrans{B_1 } L_{\Jcal_{I_1}} = \betatrans{\beta_2 } \btrans{B_2 } L_{\Jcal_{I_2}}
    \]
    then the algebras 
    $\mathbf{A}(I_1,\beta_1,B_1)$ and $\mathbf{A}(I_2,\beta_2,B_2)$ are cohesive Morita equivalent. In particular, $\D(\Jcal)$ is well-defined up to $A_\infty$-quasi-equivalence if $\Jcal$ is of even cotype.
    \begin{proof}
        Suppose that \[ L_{\Jcal} = \betatrans{\beta_1 }\btrans{B_1 }L_{\Jcal_{I_1}} = \betatrans{\beta_2 }\btrans{B_2 }L_{\Jcal_{I_2}}.\]
        We want to use Theorem \ref{theorem:inducedequivalences} to compare with the T-dual picture, but as we need  invertiblity assumptions we may have to perturb the presentation.
        So choose $\Delta$ as in Lemma \ref{lemma:pert} below. Since $\Delta$ is of type $(1,1)$ with respect to $I_1$, we have 
        \[L_{\Jcal} = \betatrans{\beta_1 }\btrans{B_1 + \Delta}L_{\Jcal_{I_1}}\]
        and
        \[\mathbf{A}(I_1,\beta_1,B_1) = \mathbf{A}(I_1,\beta_1,B_1 + \Delta).\]
      We write down a presentation of the T-dual of $L_{\Jcal}$ on $V^* / \Gamma^*$ 
        \[ \varphi(L_{\Jcal}) = \betatrans{\tbeta } \btrans{\tB } L_{\Jcal_{\tilde I}},\]
        and by Lemma \ref{lemma:Tdualflip} we have
        \begin{align*}
            &\tbeta  = (1+(B_1  + \Delta ) \beta_1 )^{-1} (B_1  + \Delta) \\
            &\tB  = \beta_1  (1+(B_1  + \Delta ) \beta_1 )^{-1}\\
            &\tilde I = -(1+(B_1  + \Delta ) \beta_1 )^{-1} I^* (1+(B_1  + \Delta ) \beta_1 ).
        \end{align*}
        These equations are a particular instance of the equations in Lemma \ref{lemma:Tdualflip} applied to $(I,\beta,B) := (I_1,\beta_1,B_1+\Delta)$
        noting $(1+(B_1+\Delta)\beta_1)^{-1} = 1 - \tilde \beta \beta_1$ with invertibility provided by Lemma \ref{lemma:pert}.
        Moreover, as $\Delta$ is of type $(1,1)$, the same equations hold for $(I_1,\beta_1,B_1)$, and thus they hold for the second presentation $(I_2,\beta_2,B_2)$.
        Since $\Delta$ was chosen exactly so that $1 - \beta_2  \tbeta $ and $1-\beta_1  \tbeta $ are invertible, Theorem \ref{theorem:inducedequivalences} yields the $A_\infty$-quasi-equivalences
        \[ \mathcal P_{\mathbf{A}(I_1,\beta_1,B_1+\Delta)} \simeq \mathcal P_{\mathbf{A}(\tilde I, \tbeta, \tB)} \simeq \mathcal P_{\mathbf{A}(I_2,\beta_2, B_2)}. \qedhere\]
    \end{proof}
\end{theorem}
Well-definedness for GC structures of odd cotype will be deduced in Subsection \ref{sect:integral}.

\begin{lemma}
    \label{lemma:pert}
    Let $B_1,B_2$ be two forms on $V$, $\beta_1, \beta_2$ bivectors on $V$ and $I_1,I_2$ complex structures such that 
    \[ L_{\Jcal} = \betatrans{\beta_1 }\btrans{B_1 }L_{\Jcal_{I_1}} = \betatrans{\beta_2 }\btrans{B_2 }L_{\Jcal_{I_2}}.\]
    Then there is a real two form $\Delta \in \Wedge^{(1,1)_{I_1}} V^*_{\mathbb C} \cap \Wedge^2 V^*$ of type $(1,1)$ for $I_1$, such that 
    the following endomorphisms are invertible
    \begin{align*}
        &\alpha := 1 + \beta_1  (B_1 +\Delta ) \colon V \to V\\
        &1 - \beta_2  (B_1 +\Delta ) \alpha^{-1} \colon V \to V.
    \end{align*}
    \end{lemma}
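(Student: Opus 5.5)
The plan is a genericity argument: both conditions are polynomial (determinant) conditions in $\Delta$, so it suffices to show that each determinant is a polynomial on the real vector space $W := \Wedge^{(1,1)_{I_1}} V^*_{\mathbb C} \cap \Wedge^2 V^*$ which is not identically zero. The set of good $\Delta$ is then the complement of a proper real algebraic subset of $W$, hence open dense and in particular nonempty.

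For the first map, observe that $\det \alpha \cdot \det(1-\beta_2(B_1+\Delta)\alpha^{-1}) = \det(\alpha - \beta_2(B_1+\Delta)) = \det(1+(\beta_1-\beta_2)(B_1+\Delta))$. So it suffices to find one $\Delta\in W$ making both
\[ p_1(\Delta) = \det(1+\beta_1(B_1+\Delta)) \quad\text{and}\quad p_2(\Delta)=\det(1+(\beta_1-\beta_2)(B_1+\Delta)) \]
nonzero. Since the product $p_1 p_2$ is a polynomial, it is enough to show each $p_i$ is nonzero somewhere on $W$ (the product of two nonzero polynomials is nonzero).

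To show $p_i \not\equiv 0$, I will choose a scaling. Take $\Delta = t\, \omega_h - B_1^{(1,1)}$, where $\omega_h = g I_1$ is a Kähler form for some $I_1$-Hermitian metric $g$. Then $B_1+\Delta = B_1^{(2,0)+(0,2)} + t\,\omega_h$, and for any real bivector $\gamma$ (either $\beta_1$ or $\beta_1-\beta_2$) we get $\det(1+\gamma B_1' + t\gamma\omega_h)$, a polynomial in $t$. Its constant term is $\det(1+\gamma B_1')$, which may vanish, so I will instead read off the behavior in $t$. If $\gamma$ is nondegenerate, the leading coefficient $t^{2n}\det(\gamma\omega_h)\neq 0$ and we are done. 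If $\gamma$ is degenerate, I would instead choose $\Delta$ to make $1+\gamma(B_1+\Delta)$ an isomorphism directly: restrict to $\operatorname{im}\gamma$ and use that the map $\Delta\mapsto\gamma\Delta$ restricted to $W$ still provides a perturbation term $t\gamma\omega_h$ that is ``positive'' on $\operatorname{im}\gamma$, namely $\langle \xi, \gamma\omega_h\gamma\xi\rangle$ is definite on $\operatorname{im}\gamma$. Then the determinant, as a polynomial in $t$, has a nonzero coefficient of $t^{\operatorname{rk}\gamma}$, computed on the image and kernel decomposition.

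The main obstacle is exactly this degenerate case, and more precisely the interaction between the $(2,0)+(0,2)$ part $B_1'$ and the only-$(1,1)$ freedom in $\Delta$. The fallback is a cleaner argument: consider the larger family $\Delta = t\omega_h + s\,b$ with arbitrary $b\in W$, and argue via the matrix determinant lemma that $\det(1+\gamma X)$ for $X = B_1' + t\omega_h$ has top-degree-in-$t$ coefficient $\det$ of the compression $\gamma\omega_h$ on $\operatorname{im}\gamma$ times a factor from $\ker$. That factor is $1$, because $\gamma X$ maps into $\operatorname{im}\gamma$, and the compression of $\omega_h$ to $\operatorname{im}\gamma$ is nonsingular as a symmetric-definite pairing $g(I_1\cdot,\cdot)$ composed with the nondegenerate skew form induced by $\gamma$ on its image.
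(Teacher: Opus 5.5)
\textbf{There is a genuine gap in the degenerate case.} Several parts of your plan are correct:
\begin{itemize}
\item the reduction to $p_1(\Delta)=\det(1+\beta_1(B_1+\Delta))$ and $p_2(\Delta)=\det(1+(\beta_1-\beta_2)(B_1+\Delta))$;
\item the observation that it suffices to show each $p_i$ is a nonzero polynomial on $W$ (this gives a common $\Delta$ a little more cleanly than the paper's ``choose $t_1$ close to $0$'');
\item the identity $\det(1+\gamma X)=\det\big((1+\gamma X)|_E\big)$ for $E=\operatorname{im}\gamma$;
\item the fact that the coefficient of $t^{\dim E}$ is $\det(\gamma\omega_h|_E)$. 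Since $\gamma$ induces an isomorphism $E^*\to E$, this is nonzero if and only if $\omega_h|_{E\times E}$ is nondegenerate.
\end{itemize}

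The gap is your claim that this holds for a \emph{fixed} Kähler form $\omega_h=g(I_1\cdot,\cdot)$. Your justification conflates $g$ with $g(I_1\cdot,\cdot)$. The metric $g$ is symmetric and definite but is not a 2-form. The form $g(I_1\cdot,\cdot)$ is a $(1,1)$-form but is skew. In fact $\langle\xi,\gamma\omega_h\gamma\xi\rangle=\pm\omega_h(\gamma\xi,\gamma\xi)$ vanishes identically, so it is never definite.

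The claim itself is false. Take $V=\mathbb{C}^2$ with the standard $I_1$ and $g$, so $\omega_h=dx_1\wedge dy_1+dx_2\wedge dy_2$. Set $I_2=I_1$, $\beta_1=\beta_2=\partial_{x_1}\wedge\partial_{x_2}$, and $B_1=B_2=\mathrm{Re}(dz_1\wedge dz_2)=dx_1\wedge dx_2-dy_1\wedge dy_2$. Then:
\begin{itemize}
\item $E=\operatorname{span}(\partial_{x_1},\partial_{x_2})$ is $\omega_h$-isotropic, so $\beta_1\omega_h|_E=0$;
\item $B_1^{(1,1)}=0$, so your $\Delta$ is just $t\omega_h$;
\item $\beta_1B_1|_E=-1_E$.
\end{itemize}
Hence $\det(1+\beta_1(B_1+t\omega_h))=\det(1_E+\beta_1B_1|_E)=0$ for every $t$, and your one-parameter family never produces an invertible $\alpha$. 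Your fallback family $t\omega_h+sb$ does not repair this unless you show that some $b\in W$ restricts nondegenerately to $E$, which is precisely the missing point.

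The missing ingredient is a real $(1,1)$-form whose restriction to the given even-dimensional subspace $E$ is nondegenerate, and it must be chosen \emph{adapted to $E$}. This is the paper's Lemma~\ref{lemma:11forms}. Write $E=(E\cap I_1E)\oplus C$ with $C$ totally real (and even-dimensional), so that $V=(E\cap I_1E)\oplus C\oplus I_1C\oplus T$. Take a nondegenerate $(1,1)$-form on $E\cap I_1E$, plus $(c+I_1c',\tilde c+I_1\tilde c')\mapsto\omega_C(c,\tilde c)+\omega_C(c',\tilde c')$ for a symplectic $\omega_C$ on $C$. This is $I_1$-invariant and nondegenerate on $E$, but in general it is not positive. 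With such a form $\omega_E$ in place of $\omega_h$, the top coefficient $\det(\gamma\omega_E|_E)$ is nonzero. You construct it separately for $E=\operatorname{im}\beta_1$ and for $E=\operatorname{im}(\beta_1-\beta_2)$, which your polynomial framework allows. Each $p_i$ is then a nonzero polynomial on $W$, and your argument goes through, becoming essentially the paper's proof.
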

    \begin{proof}
        The invertibility of these endomorphisms is equivalent to the invertibility of $\alpha$ and
         \begin{align*}
        &Q:= 1 + (\beta_1  - \beta_2 )(B_1 +\Delta ) \colon V \to V.
        \end{align*}
        Let us focus on the invertibility of $\alpha$ first. If $\beta_1=0$ there is nothing to prove, so suppose $\beta_1\neq 0$ and define the real subspace $E := \mathrm{Im}(\beta_1 )$. Since $E$ is even-dimensional there exists $\omega \in \Wedge^{(1,1)_{I_1}} V^*_{\mathbb C} \cap \Wedge^2 V^*$ such that $\omega|_{E \times E}$ is non-degenerate by Lemma \ref{lemma:11forms}. Consider the polynomial
        \[ p(t) := \det(1 + \beta_1 (B_1  + t \omega )) = \det(1_E + \beta_1 (B_1  + t \omega )|_{E}). \]
        The leading coefficient of $p(t)$ in $t$ is
        \[  \det(\beta_1 ( \omega )|_{E}) \neq 0 \]
        by construction of $\omega$. Hence, $p(t)$ does not vanish identically. In fact $\{ t \in \mathbb R, p(t) \neq 0\} \subset \mathbb R$ is cofinite. Choose $t_0 \in \mathbb R$ with $p(t_0) \neq 0$ so that 
        \[ 1+\beta_1  (B_1  + t_0 \omega ) \] is invertible. Analogously, we can then solve for $t_1$ and $\omega_1$ such that 
        \[ 1 + (\beta_1  - \beta_2 )(B_1 + t_0 \omega  + t_1 \omega_1 )\]
        is invertible.
        As $t_1$ can be chosen arbitrarily close to $0$, we can arrange that
        \[ 1 + \beta_1  (B_1 +t_0 \omega  + t_1 \omega_1 ) \]
        is also invertible. Hence, $\Delta :=t_0 \omega + t_1 \omega_1$ is the desired two form. 
    \end{proof}
\begin{lemma}
    \label{lemma:11forms}
    Let $V$ be a real vector space with a complex structure $J \colon V \to V$ and let $E \subset V$ be a real even-dimensional subspace of $V$. Then there exists a real 2-form of type $(1,1)$, $\omega \in \Wedge^{(1,1)} V^*_{\mathbb C} \cap \Wedge^2 V^*$ 
    such that $\omega|_{E \times E}$ is non-degenerate.
    \begin{proof} 
        Let 
        \[E_J := E \cap JE\] 
        be the maximal $J$-invariant subspace of $E$. Choose a complement $C \subset E$, so that $E_J \oplus C = E$. Then $JC \cap E = 0$ and $C$ is even-dimensional. Thus, \[V = E_J \oplus C \oplus JC \oplus T\] where $T$ is a complex subspace. 
        Choose a non-degenerate real 2-form $\omega_{E_J}$ on $E_J$ of type $(1,1)$ and a real non-degenerate 2-form $\omega_C$ on $C$. Then for $(e,c,Jc',t), (\tilde e, \tilde c,J \tilde c',\tilde t) \in V$ we define
        \[ \omega((e,c,Jc',t), (\tilde e, \tilde c,J \tilde c',\tilde t)) := \omega_E(e,\tilde e) + \omega_C(c,\tilde c) + \omega_C(c',\tilde c').\]

        $\omega$ is real of type $(1,1)$ and its restriction to $E = E_J \oplus C$ is non-degenerate.
    \end{proof}
\end{lemma}

\subsection{Integral invariance}\label{sect:integral}
\begin{theorem}\label{thm:zinvariance}
   Let $\Jcal$ be a translation invariant GC structure on $T^{2n}$ of even cotype. 
   Then for any $g\in SO(2n, 2n; \Z)$ there is an $A_\infty$-quasi-equivalence 
   \[\D(\Jcal) \simeq \D(g\Jcal g^{-1}).\] 
\end{theorem}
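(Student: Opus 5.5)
We reduce to generators of $SO(2n,2n;\Z)$ and check each generator separately. The group $SO(\Gamma\oplus\Gamma^*)\cong SO(2n,2n;\Z)$ is generated by three kinds of elements.
\begin{itemize}
\item[(a)] Integral $B$-transforms $e^{b}$ with $b\in\Wedge^2\Gamma^*$.
\item[(b)] Integral $\beta$-transforms $e^{\sigma}$ with $\sigma\in\Wedge^2\Gamma$.
\item[(c)] Lattice automorphisms $A\in SL(\Gamma)$ (or $GL(\Gamma)$), acting as $\operatorname{diag}(A,(A^{-1})^*)$.
\end{itemize}
Products of T-dualities along pairs of circles, which preserve the parity of the cotype, are also available if needed. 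Since the statement is invariant under composition, it suffices to prove $\D(\Jcal)\simeq\D(g\Jcal g^{-1})$ for each generator $g$ and every $\Jcal$ of even cotype. All three generator types preserve the parity of the cotype, so every structure that appears stays within the even-cotype setting, where Theorem \ref{theorem:welldefined} guarantees that $\D$ is independent of the chosen presentation.

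For each generator, fix a presentation $L_\Jcal=e^{\beta}e^{B}L_{\Jcal_I}$, which exists by Proposition \ref{proposition:B_then_beta_eventype}.

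\emph{Case (c).} Here $gL_\Jcal=e^{A\beta A^*}e^{(A^{-1})^*BA^{-1}}L_{\Jcal_{AIA^{-1}}}$. Pulling back along the diffeomorphism of $V/\Gamma$ induced by $A$ gives an isomorphism of cdg-algebras $\mathbf A(I,\beta,B)\cong\mathbf A(AIA^{-1},A\beta A^*,(A^{-1})^*BA^{-1})$, because $A$ preserves $\Gamma^*$ and the formula \eqref{eq:starformula}.

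\emph{Case (b).} We have $e^{\sigma}L_\Jcal=e^{\beta+\sigma}e^BL_{\Jcal_I}$, and $\mathbf A(I,\beta+\sigma,B)\cong\mathbf A(I,\beta,B)$ by Lemma \ref{lemmma:integralBeta}. The isomorphism $\Phi$ there is the identity on the form part and commutes with $\bar\partial$ and the central curvature, as one checks on Fourier modes.

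\emph{Case (a).} This is the main obstacle: $e^{b}e^{\beta}e^{B}$ is not directly of the form $e^{\beta'}e^{B'}$. I would handle it by T-duality. Choose a presentation of the T-dual, $\varphi(L_\Jcal)=e^{\tbeta}e^{\tB}L_{\Jcal_{\tilde I}}$, using Lemma \ref{lemma:pert} so that $1-\tbeta\beta$ is invertible. Theorem \ref{theorem:inducedequivalences} then gives $\D(\Jcal)\simeq\mathcal P_{\mathbf A(\tilde I,\tbeta,\tB)}$. Since $\varphi e^{b}\varphi^{-1}=e^{b}$ with $b$ now an integral bivector on $V^*$, we get
\[
\varphi(e^bL_\Jcal)=e^{\tbeta+b}e^{\tB}L_{\Jcal_{\tilde I}}.
\]
By case (b) on the dual torus, this presentation has a generalized Dolbeault algebra isomorphic to $\mathbf A(\tilde I,\tbeta,\tB)$.

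It remains to apply Theorem \ref{theorem:inducedequivalences} in reverse, which needs a presentation of $e^bL_\Jcal$ compatible with $(\tilde I,\tbeta+b,\tB)$ together with an invertibility condition. The natural candidate is the one produced by the formulas in the proof of Theorem \ref{theorem:welldefined} (Lemma \ref{lemma:Tdualflip}, $\tB$-gauge). To get the invertibility, I would perturb $\tB$ by a $(1,1)_{\tilde I}$-form, as in Lemma \ref{lemma:pert}. I expect this perturbation bookkeeping to need the most care: one must keep invertibility for both $\tbeta$ and $\tbeta+b$ simultaneously, which can be arranged by choosing the $(1,1)$-perturbation parameter outside a finite set, exactly as in that lemma. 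The chain of $A_\infty$-quasi-equivalences then gives
\[
\D(e^b\Jcal e^{-b})\simeq\mathcal P_{\mathbf A(\tilde I,\tbeta+b,\tB)}\cong\mathcal P_{\mathbf A(\tilde I,\tbeta,\tB)}\simeq\D(\Jcal).
\]
Composing the generator cases completes the argument.
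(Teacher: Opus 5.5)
Your proof is correct and is essentially the paper's argument. Both reduce to generators of $SO(2n,2n;\Z)$ and use three ingredients: the base-change isomorphism, Lemma~\ref{lemmma:integralBeta}, and Theorem~\ref{theorem:inducedequivalences}. The paper takes T-duality over a point ($\varphi$) itself as the third generator, rather than integral $B$-transforms, which are $\varphi$-conjugates of integral $\beta$-transforms. Your case (a) is therefore exactly that T-duality step applied twice around Lemma~\ref{lemmma:integralBeta}, with the invertibility bookkeeping made explicit; the paper leaves it implicit. In fact only the invertibility for the second application needs arranging, since perturbing $\tB$ by a $(1,1)_{\tilde I}$-form leaves $1-\tbeta\beta$ unchanged.

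One correction: in (c) you must take $GL(\Gamma)$, not $SL(\Gamma)$. The group generated by $SL(\Gamma)$ together with integral $B$- and $\beta$-transforms lies in the spinorial kernel. The paper uses $P=\operatorname{diag}(-1,1,\dots,1)$ precisely to supply an element of nontrivial spinor norm.
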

\begin{proof}
We first note that $SO(2n, 2n; \Z)$ is generated by the matrices of the following form:
$$
\begin{pmatrix}
    P & 0 \\ 0 & P^{-T}
\end{pmatrix},
\begin{pmatrix}
    \id & \beta \\ 0 & \id
\end{pmatrix},
\begin{pmatrix}
    0 & \id \\ \id & 0
\end{pmatrix},\quad $$
where $P \in GL(2n, \Z)$ and $\beta$ is an antisymmetric integral $2n\times 2n$-matrix. This follows for example from \cite[Theorem 9.2.8]{hahn2013classical}:
These matrices generate all elementary matrices, thus generating the spinoral kernel, and 
\[P = \operatorname{diag}
(-1, 1, \dots, 1)\] 
provides an element with non-trivial spinor norm.

Thus the theorem  follows in three steps.
The first set of generators of $SO(2n,2n;\ZZ)$ correspond to base change by $P$, which induces a natural isomorphism of generalized Dolbeault algebras.
The second set of generators correspond to integral $\beta$-transforms, which do not change the generalized Dolbeault algebra by Lemma \ref{lemmma:integralBeta}.
The last generator corresponds to T-duality relative to a point, which leaves the generalized derived category invariant by Theorem \ref{theorem:inducedequivalences}. 
\end{proof}

\begin{corollary}\label{corollary:welldefinedandinvariant}
    $\D(\Jcal)$ is well-defined up to $A_\infty$-quasi-equivalence and invariant under $O(2n,2n;\Z)$.
\end{corollary}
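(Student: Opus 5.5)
The corollary has two parts: well-definedness of $\D(\Jcal)$ when $\Jcal$ has odd cotype, and invariance of $\D(\Jcal)$ under the full group $O(2n,2n;\Z)$. For even cotype, well-definedness is Theorem \ref{theorem:welldefined}. The plan is to reduce everything to that theorem and to Theorem \ref{thm:zinvariance}, using one fact: $O(2n,2n;\Z)\setminus SO(2n,2n;\Z)$ consists exactly of the elements that change the parity of the cotype. The matrix of T-duality relative to a circle in Remark \ref{remark:ts1matrix}, call it $\tau$, has determinant $-1$. By Remark \ref{remark:typechange} it changes the parity of the type, hence of the cotype, since $\dim V$ is even. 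So $O(2n,2n;\Z)=SO(2n,2n;\Z)\sqcup SO(2n,2n;\Z)\,\tau$.

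For odd cotype $\Jcal$, the definition chooses an integral T-duality relative to some circle, giving $\Jcal'=\tau_1\Jcal\tau_1^{-1}$ of even cotype. Take two such choices $\tau_1,\tau_2$, coming from different circles or different integral splittings. Then $\Jcal''=\tau_2\Jcal\tau_2^{-1}=g\Jcal' g^{-1}$ with $g=\tau_2\tau_1^{-1}$. This $g$ is integral by Lemma \ref{lem:Tdualintegral}, and its determinant is $(-1)^2=1$, so $g\in SO(2n,2n;\Z)$. Theorem \ref{thm:zinvariance} then gives $\D(\Jcal')\simeq\D(\Jcal'')$, so $\D(\Jcal)$ is well defined up to $A_\infty$-quasi-equivalence.

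For invariance, let $g\in O(2n,2n;\Z)$ and split into four cases by the parity of $\Jcal$ and by $\det g$.
\begin{itemize}
\item $\Jcal$ even, $\det g=1$: this is Theorem \ref{thm:zinvariance}.
\item $\Jcal$ even, $\det g=-1$: here $g\Jcal g^{-1}$ has odd cotype. By definition and the previous paragraph, $\D(g\Jcal g^{-1})=\D(\tau g\Jcal g^{-1}\tau^{-1})$, and $\tau g\in SO(2n,2n;\Z)$, so Theorem \ref{thm:zinvariance} applies.
\item $\Jcal$ odd, $\det g=1$: write $\D(\Jcal)=\D(\tau\Jcal\tau^{-1})$ and $\D(g\Jcal g^{-1})=\D(\tau g\Jcal g^{-1}\tau^{-1})$. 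The two even-cotype structures differ by conjugation with $\tau g\tau^{-1}\in SO(2n,2n;\Z)$, and Theorem \ref{thm:zinvariance} applies.
\item $\Jcal$ odd, $\det g=-1$: now $g\Jcal g^{-1}$ has even cotype and equals $(g\tau^{-1})(\tau\Jcal\tau^{-1})(g\tau^{-1})^{-1}$ with $g\tau^{-1}\in SO(2n,2n;\Z)$, so Theorem \ref{thm:zinvariance} applies again.
\end{itemize}

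The main point to verify is the claim that $\det g=-1$ exactly when $g$ flips cotype parity, for integral $g$. It suffices to check this on a generating set. The generators listed in the proof of Theorem \ref{thm:zinvariance} all preserve parity: $\beta$-transforms preserve it, as noted in the proof of Proposition \ref{proposition:B_then_beta_eventype}; $GL$ base changes do; and the full point T-duality $\begin{pmatrix}0&\id\\ \id&0\end{pmatrix}$ on $\VV$ maps type $k$ to cotype $k$, which is the same parity for $\dim_\CC=2n$ even. The element $\tau$ flips parity. This settles the claim, and there is also the more conceptual fact that the parity of a pure spinor is preserved by $\mathrm{Pin}$ elements of even degree. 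Some care is needed that the integral T-duality "relative to any circle" lands in $O(\Gamma\oplus\Gamma^*)$, which is exactly Lemma \ref{lem:Tdualintegral}.
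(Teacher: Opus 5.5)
This is essentially the paper's argument. For odd cotype, $\tau_2\tau_1^{-1}\in SO(2n,2n;\Z)$, so Theorem \ref{thm:zinvariance} applies. For $O$-invariance, $SO(2n,2n;\Z)$ together with the determinant $-1$ circle T-duality of Remark \ref{remark:ts1matrix} generates $O(2n,2n;\Z)$. Your four cases unpack the paper's one-line remark, and making the parity/determinant correspondence explicit is a useful addition, since the paper uses it silently.

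One sub-step is wrong as written. Full T-duality $\varphi$ does not send type $k$ to cotype $k$. It sends $\Jcal_I$ (type $n$) to $\Jcal_{-I^*}$ (again type $n$), and $\Jcal_\omega$ (type $0$) to a symplectic structure (type $0$). Also, for $T^{2n}$ one has $\dim_{\CC}V=n$, not $2n$, so your parity conclusion would not follow for odd $n$.

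The correct reason is the following. $\mathrm{type}(\Jcal)=\dim_{\CC}(L_{\Jcal}\cap V^*_{\CC})$, while the type of its dual is $\dim_{\CC}(L_{\Jcal}\cap V_{\CC})$. These agree mod $2$ because $V_{\CC}$ and $V^*_{\CC}$ are maximal isotropic with zero intersection in a $4n$-dimensional space, hence lie in the same family.

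Alternatively, lean on your spinor remark. Type parity records which family $L_{\Jcal}$ lies in, and elements of determinant $1$ preserve the two families of maximal isotropics while elements of determinant $-1$ swap them. This gives the parity/determinant correspondence for every $g$ directly, with no generator check needed.
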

\begin{proof}
    If $\Jcal$ is of even cotype this is Theorem \ref{theorem:welldefined}. If $\Jcal$ is of odd cotype we have to show that the choice of T-duality did not matter. But the composition of any two type-changing T-dualities is conjugation by an element of $SO(2n,2n;\Z)$, so the first result follows from Theorem \ref{thm:zinvariance}.

    Invariance under $O(2n, 2n; \Z)$ follows as T-duality relative to a circle is represented by a matrix with negative determinant, see Remark \ref{remark:ts1matrix}, generating $O(2n,2n; \Z)$.  

    Note that we formulated Theorem \ref{thm:zinvariance} for automorphisms while T-duality involves a change of vector space. 
    We will be lax about this point as it does not affect the proofs but would complicate notation.
\end{proof}
\begin{corollary}\label{cor:integralconsequences}
The generalized derived category $\Dcal(\Jcal)$ is invariant under integral $B$-transforms and integral T-duality.
\end{corollary}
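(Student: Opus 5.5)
The plan is to read Corollary \ref{cor:integralconsequences} directly off Corollary \ref{corollary:welldefinedandinvariant}. Both operations in the statement act on translation invariant GC structures by conjugation with an element of $O(2n,2n;\Z)$, and $\D(\Jcal)$ is well defined and invariant under that whole group. So the real work is to show that each operation lies in $O(2n,2n;\Z)$, and then to take care of the odd cotype case.

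\emph{Integral $B$-transforms.} Take $B \in \Wedge^2 \Gamma^*$, an integral antisymmetric matrix in a basis of $\Gamma$. Then $\btrans{B} = \begin{pmatrix} \id & 0 \\ B & \id \end{pmatrix}$ preserves $\Gamma\oplus\Gamma^*$ and the pairing $\langle\cdot,\cdot\rangle$. It is unipotent, so it has determinant one and lies in $SO(2n,2n;\Z)$. In fact it is the conjugate of an integral $\beta$-transform by the point T-duality matrix $\begin{pmatrix} 0&\id\\ \id&0\end{pmatrix}$.

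For $\Jcal$ of even cotype, Theorem \ref{thm:zinvariance} gives $\D(\Jcal)\simeq\D(\btrans{B}\Jcal\btrans{-B})$. The type is preserved by $B$-transforms, so $\btrans{B}\Jcal\btrans{-B}$ is again of even cotype. For even cotype there is also a direct check. If $L_\Jcal=\betatrans{\beta}\btrans{B'}L_{\Jcal_I}$, then
\[\btrans{B}\betatrans{\beta} = \betatrans{\beta'}\btrans{B''}g\]
for a suitable element $g$. I would not chase this identity, since the generator argument already covers it. For odd cotype, I would use Corollary \ref{corollary:welldefinedandinvariant}, which gives invariance under all of $O(2n,2n;\Z)$.

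\emph{Integral T-duality.} Let $V/\Gamma$ and $\tilde V/\tilde\Gamma$ be affine integral T-dual. By Lemma \ref{lem:Tdualintegral}, the induced Courant isomorphism $\phi\colon V\oplus V^*\to\tilde V\oplus\tilde V^*$ carries $\Gamma\oplus\Gamma^*$ onto $\tilde\Gamma\oplus\tilde\Gamma^*$ and preserves the pairing. Fix an isomorphism $\Gamma\cong\tilde\Gamma$, which is a base change by some $P\in GL(2n,\Z)$ and is harmless by the first family of generators. With this choice, $\phi$ becomes an element of $O(2n,2n;\Z)$, and the T-dual structure is $\tilde\Jcal=\phi\Jcal\phi^{-1}$. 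Corollary \ref{corollary:welldefinedandinvariant} then gives $\D(\Jcal)\simeq\D(\tilde\Jcal)$.

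Some of these dualities have determinant $-1$, for instance T-duality along a single circle (Remark \ref{remark:ts1matrix}). Those change the parity of the cotype and are covered by the $O$ rather than the $SO$ statement. That is exactly how $\D$ of an odd cotype structure was defined, so this is consistent.

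\emph{Main obstacle.} The only delicate point is bookkeeping: T-duality changes the underlying torus, while Theorem \ref{thm:zinvariance} is phrased for automorphisms of one torus. I would handle this by transporting $\tilde\Jcal$ back along the chosen lattice identification $\Gamma\cong\tilde\Gamma$, so that we stay on a single torus. The resulting base-change isomorphism of generalized Dolbeault algebras, $\mathbf A(I,\beta,B)\cong\mathbf A(P I P^{-1},P\beta P^{T},P^{-T}BP^{-1})$, induces an isomorphism of the categories of cohesive modules, as in the first step of the proof of Theorem \ref{thm:zinvariance}. This makes the comparison legitimate and completes the argument.
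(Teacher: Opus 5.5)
Your proposal is correct and matches the paper's argument: integral $B$-transforms lie in $SO(2n,2n;\Z)$, and by Lemma \ref{lem:Tdualintegral} integral T-duality is conjugation by an integral orthogonal element, so invariance follows from Theorem \ref{thm:zinvariance} and Corollary \ref{corollary:welldefinedandinvariant}. Your treatment of the determinant $-1$ dualities via $O(2n,2n;\Z)$ (the paper loosely writes $SO$ here) and of the transport between $V/\Gamma$ and $\tilde V/\tilde\Gamma$ supplies bookkeeping that the paper explicitly leaves lax.
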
 
\begin{proof}
    The first statement is clear from the theorem.
    The fact that T-duality corresponds to conjugation with an element in $SO(2n,2n;\ZZ)$ is Lemma \ref{lem:Tdualintegral}.
\end{proof}

  \begin{remark}
      Note that this should not be confused with the action of $SO(2n,2n;\ZZ)$ on non-commutative tori by fractional linear transforms by Morita equivalences \cite{rieffel1999morita}.
  \end{remark}
\section{Homological mirror symmetry and relation to the Fukaya category}\label{sect:fukaya}
 
\subsection{Noncommutative mirror symmetry}
In this subsection we explore mirror symmetry and the relationship of our construction with the Fukaya category.

Let $X= (V/\Gamma, \omega + \ii B)$ a $B$-symplectic torus und let us denote by $\Fuk'(X)$ its Fukaya category over $\C$. 

There are three types of desirable statements in this context, any two of which imply the third. (Their precise meaning of course depend on what exactly one means by the Fukaya category and by homological mirror symmetry. We will return to this shortly.)

\begin{enumerate}
    \item $\Dcal(\Jcal_{\omega+\ii B})$ is equivalent to $\Fuk'(X)$
    \item $\Fuk'(X)$ satisfies homological mirror symmetry.
    \item $\Dcal(\Jcal)$ satisfies homological mirror symmetry.
\end{enumerate}

In the following subsections we will directly compare objects and hom spaces in $\Dcal(\Jcal_\omega)$ and in the Fukaya category, giving strong indication of (1).

In this subsection we prove (3), which will show (1) assuming homological mirror symmetry.

To be precise we prove that $\Dcal(\Jcal)$ satisfies a noncommutative version of homological mirror symmetry. It has long been expected that the mirror of a $B$-symplectic torus is sometimes a noncommutative complex torus. In fact it follows from a simple dimension count that the space of complex tori is not large enough to contain mirror partners for all $B$-symplectic manifolds.

By the SYZ-conjecture, mirror symmetry corresponds to T-duality with Lagrangian fibers, and by Lemma \ref{lem:Tdualintegral} this is given by integral conjugation. 
In fact, choose any splitting $\Gamma = \Gamma_1 \times \Gamma_2$. Then letting $V_i := \Gamma_i \otimes_{\mathbb Z} \mathbb R$, we may express T-duality by conjugating the GC structure $\Jcal$ with
$$S = \begin{pmatrix}
        0 & 0 & \id & 0 \\ 0 & \id & 0 & 0 \\ \id & 0 & 0 & 0 \\ 0 & 0 & 0 & \id
    \end{pmatrix} \colon (V_1 \oplus V_2) \oplus (V_1 \oplus V_2)^* \to (V_1^* \oplus V_2) \oplus (V_1^* \oplus V_2)^*$$
where the resulting generalized complex structure is defined on $\tilde V / \tilde \Gamma = V_1^*/ \Gamma_1^* \times V_2 / \Gamma_2$.

    This mirror transform for GC tori already appears in \cite[Definition 3.14]{kajiura2008noncommutative}. We now have a generalized derived category to formulate and prove a homological mirror statement:

    \begin{theorem}\label{thm:nchms}
  Given any translation invariant GC structure $\mathcal J$ on the torus let $\hat \Jcal = S\Jcal S^{-1}$ be its mirror partner.
Then \[\D(\Jcal) \simeq \Dcal(\hat \Jcal).\]
\end{theorem}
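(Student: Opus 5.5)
The statement should follow from integral invariance, Corollary \ref{corollary:welldefinedandinvariant}. The mirror transform $S$ is, after identifying $\tilde\Gamma = \Gamma_1^*\times\Gamma_2$ with $\Gamma$ through a choice of $\mathbb Z$-bases, an integral orthogonal automorphism of $(\Gamma\oplus\Gamma^*,\langle\cdot,\cdot\rangle)$. The plan is to check this, locate $S$ in $O(2n,2n;\ZZ)$, and invoke invariance.

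\textbf{Step 1: $S$ is integral and orthogonal.} By definition $S$ swaps $V_1$ with $V_1^*$ and is the identity on $V_2\oplus V_2^*$. It therefore sends $\Gamma_1\oplus\Gamma_2\oplus\Gamma_1^*\oplus\Gamma_2^*$ onto $\Gamma_1^*\oplus\Gamma_2\oplus\Gamma_1\oplus\Gamma_2^*$, which is exactly $\tilde\Gamma\oplus\tilde\Gamma^*$ since $(\Gamma_1^*)^*=\Gamma_1$. The pairing is preserved because $\langle v_1+\xi_1,v_1+\xi_1\rangle=\xi_1(v_1)$ is symmetric under the swap. Alternatively, this is just Lemma \ref{lem:Tdualintegral} for affine integral T-duality along the fibres $V_1/\Gamma_1$, with base lattice $\Gamma_B=\Gamma_2$ and $F$ the canonical pairing $\Gamma_1\times\Gamma_1^*\to\ZZ$. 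Fixing an isomorphism $\tilde\Gamma\cong\Gamma$ (e.g.\ via $\ZZ$-bases) turns $S$ into an element $g\in O(2n,2n;\ZZ)$ with $\hat\Jcal = g\Jcal g^{-1}$, up to the base-change identification. By the first family of generators in the proof of Theorem \ref{thm:zinvariance}, such base changes induce isomorphisms of generalized Dolbeault algebras.

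\textbf{Step 2: apply invariance.} Corollary \ref{corollary:welldefinedandinvariant} states that $\D$ is well-defined for all cotypes and invariant under $O(2n,2n;\ZZ)$, so $\D(\Jcal)\simeq\D(g\Jcal g^{-1})=\D(\hat\Jcal)$.

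\textbf{Parity of the cotype.} We must check that $\Dcal(\hat\Jcal)$ in the statement means the category defined in Definition \ref{definition:gencategory}. If $\hat\Jcal$ has odd cotype, this definition passes through a circle T-duality, and the corollary covers that case.

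\textbf{Main obstacle.} The main subtlety is when $\dim V_1$ is odd. Then $S$ has determinant $-1$, so it lies in $O$ but not in $SO$, and it changes the parity of the cotype. In that case I would factor $S$ as a circle T-duality (Remark \ref{remark:ts1matrix}) composed with an element of $SO(2n,2n;\ZZ)$. The circle T-duality is absorbed by the odd-cotype part of Definition \ref{definition:gencategory}, and the $SO$ factor is handled by Theorem \ref{thm:zinvariance}. If $\dim V_1$ is even, $S$ is a product of an even number of circle T-dualities, hence lies in $SO(2n,2n;\ZZ)$, and Theorem \ref{thm:zinvariance} applies directly, with Corollary \ref{corollary:welldefinedandinvariant} handling odd cotype. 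The remaining work is routine bookkeeping of the identification between the different vector spaces $V$ and $\tilde V$.
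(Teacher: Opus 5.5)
Your proof is correct and is essentially the paper's argument: the paper observes that $S$ is an integral T-duality transformation and invokes the integral invariance of Corollary \ref{cor:integralconsequences}, which rests on Lemma \ref{lem:Tdualintegral} and Corollary \ref{corollary:welldefinedandinvariant}. Your observation that $\det S = (-1)^{\dim V_1}$ is a harmless refinement of the paper's one-line claim $S\in SO(2n,2n;\ZZ)$, which holds only when $\dim V_1$ is even; since $\D$ is invariant under all of $O(2n,2n;\ZZ)$, the conclusion is unaffected.
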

\begin{proof}
This is a direct corollary of Corollary \ref{cor:integralconsequences} as $S \in SO(2n,2n; \Z)$.
\end{proof}

Let us now consider the meaning of this theorem.

  As above $X$ is a $2n$-torus equipped with a (translation invariant) $B$-symplectic structure $\omega + \ii B$. 
  There is an $A_\infty$-category $\Fuk_{\mathrm{aff}}(X)$ (over $\CC$) whose objects are built out of twisted complexes on affine Lagrangians $L$ equipped with unitary connections with curvature $2\pi \ii B|_L$. 
  The hom spaces are given by Floer cohomology, for further details, in particular the composition rules, we refer to  \cite{Fukaya02}.

  If now $V$ contains a pair of $L, L'$ of Lagrangians (i.e. $\omega + \ii B$ vanishes on $L$ and $L'$) s.t. $\Gamma = (\Gamma \cap L) \oplus (\Gamma \cap L')$ (see \cite[Assumption 1.11]{Fukaya02}) we may write $V/\Gamma$ as an affine Lagrangian fibration with an affine Lagrangian section, with base $L/(\Gamma \cap L)$ and fiber $L'/(\Gamma \cap L')$.
  Then dualizing the fibers we obtain a complex torus $\check X$, which is the mirror of $X$. 
  This is the most basic case of the SYZ-conjecture \cite{Strominger}.
  Fukaya proves that $\Fuk_{\mathrm{aff}}(X)$ fully faithfully embeds in the bounded derived category of coherent sheaves on $\check X$ \cite{Fukaya02}.
  
However, the Fukaya category over $\CC$ may contain additional objects, so-called \emph{co-isotropic branes}. The most concrete example are space-filling branes, which are expected to correspond to certain coherent sheaves on special abelian varieties that do not appear to be in the image of Fukaya's functor \cite{Kapustin03}. (The prime example is the graph of complex multiplication on the square of an elliptic curve with complex multiplication.) 
As far as we understand it remains possible that they appear in a non-obvious way in the split-closure of Fukaya's category of affine Lagrangians. 

The hom spaces between co-isotropic branes are not well-understood in general.
 \begin{remark}\label{rem:novikov}
      The reason to restrict attention to affine Lagrangians is that the infinite series needed to define the Fukaya category do not in general converge over $\CC$. 
      We may thus think of $\Fuk_{\mathrm{aff}}(X)$ as a \emph{convergent Fukaya category}.
      The true Fukaya category of $X$ should be defined over the Novikov ring, where its objects are generated by all Lagrangians.    
      Then the mirror is a rigid analytic variety over the Novikov ring, and the full statement of homological mirror symmetry was proven in the case of the standard symplectic 4-torus \cite{Abouzaid10a} and is expected to hold in general -- without any addition of co-isotropic branes.
  \end{remark}

\begin{theorem}\label{thm:hms}
  Let  $X=(V/\Gamma,\omega + \ii B)$ be a $B$-symplectic torus admitting the structure of an affine Lagrangian fibration with an affine Lagrangian section and let $\Jcal_{\omega + \ii B} = \btrans{B}\Jcal_{\omega}\btrans{-B}$ be the associated GC structure. Let $\check X$ be the complex torus mirror to $X$. Then \[\Ho \D(\Jcal_{\omega+\ii B}) \cong \Dbcoh(\check X).\]
\end{theorem}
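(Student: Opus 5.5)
The plan is to reduce the statement to Theorem \ref{thm:nchms} together with Example \ref{eg:dolbeaultcurved} and Block's Theorem \ref{theorem:derivedcat}. By assumption $V = L \oplus L'$ with $\Gamma = (\Gamma\cap L)\oplus(\Gamma\cap L')$ and $\omega + \ii B$ vanishing on $L$ and $L'$. Set $\Gamma_1 := \Gamma \cap L'$ (the fibre lattice) and $\Gamma_2 := \Gamma\cap L$ (the base lattice), and let $S\in SO(2n,2n;\ZZ)$ be the corresponding fibrewise T-duality matrix. Theorem \ref{thm:nchms} gives an $A_\infty$-quasi-equivalence
\[\D(\Jcal_{\omega+\ii B}) \simeq \D(S\Jcal_{\omega+\ii B}S^{-1}).\]
Passing to homotopy categories, it then suffices to identify $\hat\Jcal = S\Jcal_{\omega+\ii B}S^{-1}$ on $\tilde V/\tilde\Gamma = (L')^*/\Gamma_1^* \times L/\Gamma_2$.

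The key computation is to write $\hat\Jcal$ as a $B$-transform of a complex structure, i.e.\ $\hat\Jcal = \btrans{\hat B}\Jcal_{\check I}\btrans{-\hat B}$. Here $\check I$ should be the SYZ mirror complex structure of $\check X$, and $\hat B$ should be of type $(1,1)$ or at least integral.

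\begin{enumerate}
\item Write $\omega + \ii B$ in block form with respect to $L\oplus L'$. The isotropy of $L$ and $L'$ forces it to be off-diagonal, $\omega+\ii B = \begin{pmatrix} 0 & -\Omega^* \\ \Omega & 0\end{pmatrix}$, with $\Omega\colon L \to (L')^*$ complex linear and $\mathrm{Im}\,\Omega = \omega|_{L\times L'}$ invertible.
\item Compute $L_{\hat\Jcal} = S\,\mathrm{graph}(-\ii(\omega+\ii B))$. For $v=(l,l')$ the image is $(l + (-\ii\Omega l), \ \dots)$ in $((L')^*\oplus L)\oplus(L'\oplus L^*)$. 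Its projection to $\tilde V_\CC$ is $\{(-\ii\Omega l, l)\}$ together with $l'$-components landing only in the cotangent part. This projection is a complex half-dimensional subspace, so $\hat\Jcal$ has cotype $0$, is a $B$-transform of a complex structure, and is of even cotype.
\item Read off $\check I$ from the condition that $T^{1,0}$ is the graph of $-\ii\Omega$, which is the standard mirror complex structure $(L')^*\oplus L$ with $z = \Omega l$-type coordinates, that is $\check X$. Read off $\hat B$ from the cotangent part. Since $\omega+\ii B$ has no diagonal blocks, the remaining cotangent components are of type $(1,1)$ for $\check I$, so $\hat B^{0,2}=0$ and $\hat\Jcal = \Jcal_{\check I}$ up to a $(1,1)$ $B$-transform.
\end{enumerate}

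Then Example \ref{eg:dolbeault} (Block's theorem) gives $\Ho\D(\hat\Jcal)=\Ho\Pcal_{\mathbf A(\check I,0,0)}\cong\Dbcoh(\check X)$.

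The main obstacle is step 3: a careful linear-algebra check that the $(0,2)$-part of the induced $B$-field vanishes, or is integral so that Corollary \ref{cor:integralconsequences} removes it. One also has to confirm that the convention for $\check X$ (the sign and transpose of $\Omega$) matches the Fukaya/SYZ mirror. If a nonzero non-integral $(0,2)$-part appears, I would instead absorb it by changing the Lagrangian section, which shifts the identification by an integral $B$-transform.
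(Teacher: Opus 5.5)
Your proposal is correct and follows the paper's proof. Both conjugate by the fibrewise T-duality matrix $S$ and invoke Theorem~\ref{thm:nchms}. Both then use that $\omega+\ii B$ is block off-diagonal to see that $S\Jcal_{\omega+\ii B}S^{-1}$ is purely complex, and conclude with Block's theorem (Example~\ref{eg:dolbeault}).

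The obstacle you flag in step 3 does not arise. The cotangent part $\{(l',-\ii(\omega+\ii B)l')\}$ pairs to zero with the graph of $-\ii\Omega$ by skew-symmetry of $\omega+\ii B$. By a dimension count it is therefore exactly the annihilator of that graph. Hence $S L_{\Jcal_{\omega+\ii B}} = L_{\Jcal_{\check I}}$ on the nose, no $B$-field appears at all, and your fallback is unnecessary.
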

Note that we are not assuming $\check X$ is  a principally polarized abelian variety here.
\begin{proof}
The existence of the affine Lagrangian fibration with an affine Lagrangian section may be re-written as the existence of a Lagrangian splitting $V = L \oplus M$ with $L$ and $M$ rational. Then $\omega + \ii B$ is block off-diagonal and conjugation with $S$ turns $\Jcal_{\omega + \ii B}$ into a purely complex GC structure $\Jcal_J$ (on $L^* / \Gamma^*_L \times B / \Gamma_B$).
The mirror $\check X$ of $(V/\Gamma,\omega + \ii B)$ is given by equipping $L^* / \Gamma^*_L \times B / \Gamma_B$ with the complex structure $J$.

By Theorem \ref{thm:nchms} we have $\D(\Jcal_{\omega + \ii B}) \cong \D(\Jcal_J)$  and Example \ref{eg:dolbeault} shows that $\D(\Jcal_J) \cong \Dbcoh(\check X)$.
\end{proof}

Let us now denote by $\Fuk'(X)$ a putative convergent Fukaya category containing Fukayas $\Fuk_{\mathrm{aff}}(X)$ as well as any co-isotropic branes needed to make homological mirror symmetry true.

\begin{conjecture}[Homological mirror symmetry]\label{conj:hms}
    Let $X$ be the $2n$-torus equipped with a translation-invariant $B$-symplectic structure $\omega + \ii B$ admitting the structure of a Lagrangian fibration together with a Lagrangian splitting.
    Then \[\Fuk'(X) \cong \Dbcoh(\check X).\]
\end{conjecture}

\begin{corollary}\label{cor:fukaya}
    Let $X$ be the $2n$-torus equipped with a translation-invariant $B$-symplectic structure $\omega + \ii B$ admitting the structure of a Lagrangian fibration with a Lagrangian splitting.
    If homological mirror symmetry holds then $\Dcal(\Jcal_{\omega + \ii B}) \simeq \Fuk'(X)$. 
\end{corollary}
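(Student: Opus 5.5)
The corollary follows by chaining Theorem \ref{thm:hms} with Conjecture \ref{conj:hms}. First I will check that the hypotheses agree. A Lagrangian fibration with a Lagrangian splitting in the sense of Conjecture \ref{conj:hms} is, for translation-invariant data on $V/\Gamma$, a splitting $V = L\oplus M$ into rational subspaces on which $\omega+\ii B$ vanishes. Rationality is exactly what makes $\Gamma=(\Gamma\cap L)\oplus(\Gamma\cap M)$ hold after a change of lattice basis, which costs only an element of $GL(2n,\Z)$. This is the hypothesis of Theorem \ref{thm:hms}: an affine Lagrangian fibration with an affine Lagrangian section. If the conjecture is read with non-rational fibrations allowed, I would simply restrict to the rational case, since that is the setting in which $\check X$ is defined.

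Next I apply Theorem \ref{thm:hms}. The mirror transformation $S$ lies in $SO(2n,2n;\Z)$ and sends $\Jcal_{\omega+\ii B}$ to the complex structure $\Jcal_J$ of $\check X$. Then Theorem \ref{thm:nchms} and Example \ref{eg:dolbeault} give an $A_\infty$-quasi-equivalence $\D(\Jcal_{\omega+\ii B})\simeq\D(\Jcal_J)=\Pcal_{\mathbf A(J)}$, hence $\Ho\D(\Jcal_{\omega+\ii B})\cong \Dbcoh(\check X)$. Conjecture \ref{conj:hms}, which we assume, gives $\Fuk'(X)\cong\Dbcoh(\check X)$. Composing the two yields $\D(\Jcal_{\omega+\ii B})\simeq\Fuk'(X)$.

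The main obstacle is the level at which the equivalence is meant. Theorem \ref{thm:hms} is stated on homotopy categories, while $\Fuk'(X)$ is an $A_\infty$-category. I would handle this by using the enhanced statement already available: $\D(\Jcal_{\omega+\ii B})$ is $A_\infty$-quasi-equivalent to $\Pcal_{\mathbf A(J)}$, which is a dg-enhancement of $\Dbcoh(\check X)$ by Block's theorem. I would then read Conjecture \ref{conj:hms} as an $A_\infty$-quasi-equivalence between $\Fuk'(X)$ and a dg-enhancement of $\Dbcoh(\check X)$. Uniqueness of such enhancements is not known for non-algebraic $\check X$. To avoid needing it, I would phrase the conclusion either at the level of homotopy categories, or with the conjecture stated directly against $\Pcal_{\mathbf A(J)}$. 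In either form the corollary follows formally.
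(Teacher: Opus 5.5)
Your argument is the one the paper intends; it gives no separate proof. You compose Theorem \ref{thm:hms} (obtained from the integral mirror transform $S$ via Theorem \ref{thm:nchms}) with Conjecture \ref{conj:hms}, and your remark that this chain is rigorous only at the level of homotopy categories, or with the conjecture stated directly against $\Pcal_{\mathbf A(J)}$, sharpens what the paper leaves implicit.

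One side remark of yours is false, though harmless. Rationality of complementary $L$ and $M$ only makes $(\Gamma\cap L)\oplus(\Gamma\cap M)$ a finite-index sublattice of $\Gamma$; for example, $\RR(1,1)\oplus\RR(1,-1)$ in $\RR^2$ with $\Gamma=\Z^2$ gives index $2$. No change of lattice basis alters that index. So the splitting $\Gamma=(\Gamma\cap L)\oplus(\Gamma\cap M)$ of Fukaya's Assumption 1.11 should be read as part of the hypothesis, and it is exactly what the construction of $S$ requires.
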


If no Lagrangian fibration and splitting exist, the generalized complex mirror is no longer purely complex.
\begin{corollary}\label{cor:nchms}
  Let  $X$ be a $B$-symplectic torus admitting an affine Lagrangian and let $\Jcal_{\omega + \ii B}$ be the associated GC structure. 
  Then the mirror is a noncommutative torus.
  Moreover there is a complex torus $\check X$ with a 2-form $B'$ such that  $\D(\Jcal_{\omega+\ii B})$ is equivalent to the derived category of coherent $B'$-twisted sheaves on $\check X$.
\end{corollary}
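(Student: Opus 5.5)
The plan is to reduce to Theorem \ref{thm:nchms} and Theorem \ref{theorem:derivedcat} by an explicit computation of the mirror $\hat\Jcal = S\Jcal_{\omega+\ii B}S^{-1}$, in the spirit of the proof of Theorem \ref{thm:hms}.

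\textbf{Step 1: set up an adapted splitting.} Let $L\subset V$ be the rational subspace underlying the affine Lagrangian, so that $\omega|_{L\times L}=0$. The existence of a unitary connection on $L$ with curvature $2\pi\ii B|_L$ lets us assume $B|_{L\times L}$ is integral. By Corollary \ref{cor:integralconsequences}, integral $B$-transforms do not change $\D$, so we may assume $B|_{L\times L}=0$. Next choose a rational complement $M$ with $\Gamma=(\Gamma\cap L)\oplus(\Gamma\cap M)$, and write $\Omega=\omega+\ii B$ in blocks $\Omega_{LL}=0$, $\Omega_{LM}$, $\Omega_{ML}$, $\Omega_{MM}$. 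Note that $M$ is \emph{not} assumed Lagrangian, so $\Omega_{MM}$ may be nonzero. Since $L$ is Lagrangian, $\omega_{LM}\colon M\to L^*$ is an isomorphism; I will try to arrange $M$ so that $\Omega_{LM}$ itself is invertible, and if this fails for $M$, perturb $M$ among rational complements.

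\textbf{Step 2: compute the mirror.} Apply $S$, which swaps $L$ and $L^*$, to
\[
L_{\Jcal}=\{X-\ii\,\Omega X\},\qquad X=(l,m).
\]
The cotangent components are $\xi_L=-\ii\Omega_{LM}m$ and $\xi_M=-\ii\Omega_{ML}l-\ii\Omega_{MM}m$. After the swap, the tangent part of the image is $(\xi_L,m)$ and the cotangent part is $(l,\xi_M)$. Because $\xi_L$ is determined linearly by $m$, the tangent projection has complex dimension $n$, so $\hat\Jcal$ has cotype $0$. By Example \ref{example:complexsymplecticGC}, $\hat\Jcal$ is therefore a $B$-transform $\btrans{B'}\Jcal_J\btrans{-B'}$ of a complex structure $J$ on $\tilde V=L^*\oplus M$. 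Reading off the formula:
\begin{itemize}
\item the holomorphic directions are the graph of $-\ii\Omega_{LM}$, which determines $J$;
\item the cotangent relation for $\xi_M$ determines $B'$, essentially the real part of $\Omega_{MM}$ transported to $\tilde V$, up to $(1,1)$-corrections.
\end{itemize}

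\textbf{Step 3: conclude.} Theorem \ref{thm:nchms} gives $\D(\Jcal_{\omega+\ii B})\simeq\D(\btrans{B'}\Jcal_J\btrans{-B'})=\Pcal_{\mathbf A(J,0,B')}$. Theorem \ref{theorem:derivedcat} then identifies the homotopy category with $\Dbcoh(\check X,\exp(2\pi\ii B'^{0,2}))$, where $\check X=(\tilde V/\tilde\Gamma,J)$. For the noncommutative-torus description, apply Theorem \ref{theorem:inducedequivalences} together with Lemma \ref{lemma:Tdualflip} to the pair $(\beta,B)=(0,B')$. The resulting T-dual presentation has $\tbeta=B'$ viewed as a bivector on $\tilde V^*$ (here $1-\tbeta\cdot 0=1$ is invertible). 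Hence the mirror can equally be presented as the noncommutative torus $\mathbf A(\tilde J,B',\tB)$, with $\tilde J$ determined as in Lemma \ref{lemma:Tdualflip}.

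\textbf{Main obstacle.} I expect the hard part to be Step 2's linear algebra, namely verifying cleanly that the image really is a pure $B$-transform of a complex structure. This needs invertibility of the tangent projection restricted to the $M$-block, i.e.\ that $-\ii\Omega_{LM}$ produces a genuine complex structure with $V_\CC=T^{1,0}\oplus\overline{T^{1,0}}$. This is where nondegeneracy of $\omega_{LM}$ must be used, and where a possible non-invertibility of $\Omega_{LM}$ coming from $B_{LM}$ has to be ruled out or perturbed away. The rest is bookkeeping with earlier results.
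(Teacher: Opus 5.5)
Your proposal is correct and follows essentially the paper's argument. You conjugate by $S$ for a splitting $V=L\oplus M$, observe that dualizing $L$ gives a $B$-transform of a complex structure while dualizing $M$ gives a $\beta$-transform, and conclude with Theorem \ref{thm:nchms} and Theorem \ref{theorem:derivedcat}. Your extra T-duality over a point is exactly the paper's direct computation, since $S_M$ is $S_L$ followed by full T-duality. Your reduction to $B|_{L\times L}=0$ by an integral $B$-transform is a detail the paper glosses over.

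The obstacle you flag is not real, and your proposed remedy could not help anyway. The subspace $\mathrm{graph}(-\ii\Omega_{LM})$ meets its conjugate trivially because $\Omega_{LM}+\overline{\Omega_{LM}}=2\omega_{LM}$ is injective, so invertibility of $\Omega_{LM}$ is never needed. Perturbing $M$ could not change $\Omega_{LM}$: once $\Omega|_{L\times L}=0$, it is just the map $V/L\to L^*$ induced by $\Omega$.
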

\begin{proof}
This is another corollary of Theorem \ref{thm:nchms}.
If there is any affine Lagrangian in $X$ we may choose a Lagrangian lift to $V$ and a possibly non-Lagrangian complement $M$ with $\Gamma = (\Gamma \cap L) \oplus (\Gamma \cap M)$. 
With respect to the splitting $V = L \oplus M$, $S$ sends $\Jcal_{\omega + \ii B}$ to the GC structure of a non-commutative torus as a mirror. 

 The computation is that if $\omega + \ii B$ admits an affine Lagrangian it may be written so that the top left quadrant of its matrix representation is $0$

and the schematic computation
$$
S \Jcal_{\omega+\ii B}S^{-1} = S \begin{pmatrix}
    0 & 0 & 0 & * \\
    0 & 0 & * & * \\
    * & * & 0 & 0 \\
    *& 0 & 0 & 0
\end{pmatrix} S^{-1} =
\begin{pmatrix}
    0 & * & * & 0 \\
    * & 0 & 0 & * \\
    0 & 0 & 0 & * \\
    0& 0 & * & 0
\end{pmatrix} 
$$
gives the $\beta$-transform of a purely complex torus, i.e.\ a noncommutative torus. 

Switching the order of the basis leads to a $B'$-transform of a complex torus, thus the Fukaya category agrees with the derived category of twisted sheaves, see Example \ref{eg:dolbeaultcurved}.
\end{proof}

If there is no affine Lagrangian at all then regardless of choice of basis any $S$-conjugate of the associated GC structure is also $B$-symplectic, i.e.\ it is a noncommutative deformation of a curved deformation of a complex torus.
In this case there are no affine Lagrangians and we do not expect any cohesive modules over a curved noncommutative Dolbeault algebra, thus we expect that this is just an equivalence of trivial categories.

We give a precise formulation of the noncommutatiev homological mirror symmetry conjecture:
\begin{conjecture}[Noncommutative homological mirror symmetry]\label{conj:nchms}
    Let $X$ be the $2n$-torus equipped with a translation-invariant $B$-symplectic structure $\omega + \ii B$
    Then we have \[\Fuk'(X) \cong \D(S \Jcal_{\omega + \ii B} S^{-1}).\]
\end{conjecture}
     We now turn to explicit objects in $\D(\Jcal_{\omega})$ (setting $B = 0$ for simplicity). 
   Before investigating Lagrangians in the next section we consider space-filling branes.
     
   \begin{example}\label{eg:sfb}
   Let  $(V/\Gamma, \omega)$ have a space-filling brane, i.e.\ a line bundle equipped with a connection whose curvature $F$ satisfies 
   \[F\omega^{-1}F + \omega  = 0,\] see \cite{Kapustin03} and \cite{kapustin2005branes}.
    Then one computes 
    \[\btrans{F}L_{\Jcal_\omega} = \betatrans{\frac{1}{2} (F)^{-1}}L_{\Jcal_J}\] 
    with 
    \[J = \omega^{-1}F\] 
    and by integral invariance (Theorem \ref{thm:zinvariance}) we have an equivalence 
    \[ \mathcal D(\mathcal J_{\omega}) \cong \mathcal P_{\mathbf A(J, \frac{1}{2} F^{-1}, 0)}.\]

    As this construction removed the curvature there is a free rank 1 module over ${\mathbf A(J, \frac{1}{2} F^{-1}, 0)}$, which we can think of as representing the space-filling brane as an object in $\Fuk(X, \omega)$.
    The derived endomorphism space is ${\mathbf A(J, \frac{1}{2} F^{-1}, 0)}$.

    This agrees with the predictions in the mathematical physics literature.
    There the endomorphism space represents the algebra $\mathcal A$ of $(\mathcal B_{cc}, \mathcal B_{cc})$-strings, with $\mathcal B_{cc}$ standing for the space-filling brane, see \cite[Section 1.5]{Gukov:2008ve}.
    With notation as above there is a holomorphic symplectic structure $\Omega = F + \ii \omega$ and $\mathcal A$ is expected to be the deformation quantization of the Dolbeault algebra controlled by the inverse holomorphic bivector $\Omega^{-1}$ (where we view $\Omega$ as a map from holomorphic tangent to holomorphic cotangent space), see \cite[Section 2.2]{Gukov:2008ve} and \cite[Section 11.1]{kapustin2007electric}.
    But as in \cite[Section 3.2]{kapustin2005branes} one finds 
    $\Omega^{-1} = \frac 1 4(F^{-1} - \ii \omega^{-1})$ and this is exactly the $(2,0)$-part of the real bivector  $\frac 1 2 F^{-1}$. 
    Thus the deformation on holomorphic functions induced by $\frac 1 2 F^{-1}$ is the one determined by $\Omega^{-1}$, as expected.
   \end{example}

\subsection{Lagrangians and strict deformation quantization}  
\label{section:floerdef}
In this section we will relate the Fukaya category of a symplectic torus $(V/\Gamma,\omega)$ of real dimension $4n$ with our generalized derived category.
This gives a description of the Fukaya category in terms of strict deformation quantization, without assuming the homological mirror symmetry conjecture. 
Hence we establish an alternative approach to proving mirror symmetry for tori in view of Theorem \ref{thm:zinvariance}. 
In fact, our computations are more general than those contained in \cite{Fukaya02} since we do not assume that the mirror is complex (it could be non-commutative or gerby).
Concretely, we show explicitly how Lagrangians give rise to objects in the deformed categories and show that Floer cohomology agrees with the spaces of homomorphisms in the generalized derived category, in particular recovering the Maslov index. 
Higher compositions will be considered in future work. For the definition of the Fukaya category (over $\CC$) we refer to \cite{Fukaya02}.

 We begin by constructing cohesive modules associated to Lagrangians.
\begin{definition}
    An \emph{affine Lagrangian object} in $(V/\Gamma, \omega)$ is an embedded affine Lagrangian torus in $V$ equipped with a unitary local system of rank 1. It is thus determined by a triple $(L,b,\alpha)$ where 
    \begin{enumerate}
        \item $L \subset V$ is a linear Lagrangian subspace of $V$ which is rational with respect to $\Gamma$, i.e.,
        \[ \Gamma_L := L \cap \Gamma \cong \mathbb Z^{2n}. \]
        \item $b \in V/L$ is a base vector,
        \item $\alpha \in L^*$ is a real 1-form along $L$.
    \end{enumerate}
    The image of $L+b$ in $V/\Gamma$ is an embedded Lagrangian torus which will be denoted as $L_b$. 
\end{definition}
Any constant Hyperkähler structure $g,I,J,K$ on $V/\Gamma$ with $\omega$ given by $\omega  = \omega_J  = g  J$ yields a model for the derived category $\D(\Jcal_{\omega})$. As in Example \ref{eg:fukaya}
\[
\D(\Jcal_{\omega}) = \mathcal P_{\mathbf A(I,-\omega_K^{-1},\omega_K/2)}.
\]
Let $(L,b,\alpha)$ be an affine Lagrangian object with respect to $\omega = \omega_J$ which is totally real for $I$. 
We have the following relations:
\begin{align}
    V = L \oplus IL,\quad \omega_K  = \begin{pmatrix}
        \omega_K|_{L \times L} & 0 \\ 0 & \omega_K|_{IL \times IL}
    \end{pmatrix} \colon L \oplus IL \to L^* \oplus (IL)^*,
\end{align}
In particular, $\omega_K|_{IL \times IL}$ is a symplectic form on $IL$. We will also use the notation $\omega_K$ for the restriction to $IL$. We choose complementary $\omega_K$-Lagrangian subspaces of $IL$:
\[ IL =  L_K' \oplus L_K \]
(these need not be rational, in fact $IL$ may not even be rational).
Define the following finitely-generated projective $\mathbf A(I,-\omega_K^{-1},\omega_K/2)^0 = C^\infty(V/\Gamma, -\omega_K^{-1})$-module 
\[ N^0(L) := C^\infty(L / \Gamma_L) \hat \otimes \mathcal 
S(L_K),\]
where $\Gamma_L = \Gamma \cap L$. The action is given by
\begin{align} \label{equation:LagAct} (u_{\gamma}.g)(l,v) = u_{\gamma}(l + v - \frac{1}{2} \omega_K^{-1}(\gamma|_{L_K'} )) g(l - \frac{1}{2} \omega_K^{-1}(\gamma|_L), v - \omega_K^{-1} (\gamma|_{L_K'}) ).\end{align}
(Note that the action does neither depend on $b$ nor on $\alpha$). 
It is finitely generated projective by \cite[Proposition 3.2]{Rieffel98}. The module $N^0(L)$, up to isomorphism, does not depend on the choice of splitting $IL = L_K' \oplus L_K$.

\begin{lemma}
    \label{lemma:projFlat}
    $b$ and $\alpha$ determine a projectively flat connection of curvature
    \[ 2\pi \ii \omega_K(L) := 2\pi \ii \begin{pmatrix}
        0 & 0 \\ 0 & \omega_K|_{IL \times IL}
    \end{pmatrix} \colon L \oplus IL \to L^* \oplus (IL)^* \]
    on $N^0(L)$ and $\omega_K(L)^{(2,0)_I + (0,2)_I} = \omega_K / 2$. 
\end{lemma}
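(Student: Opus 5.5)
The plan is to write the connection down explicitly on the generators $N^0(L)=C^\infty(L/\Gamma_L)\hat\otimes\mathcal S(L_K)$, using the splitting $V = L\oplus L_K'\oplus L_K$. There are two things to check: the Leibniz rule against the action \eqref{equation:LagAct}, and the curvature. The statement about the $(2,0)+(0,2)$-part is then a short linear algebra computation.

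For $v\in V$ write $\nabla_v := \iota_v\nabla$ and $v = l + w' + w$ with $l\in L$, $w'\in L_K'$, $w\in L_K$. I would set
\[
\nabla_v g(x,y) := \partial_l g(x,y) + 2\pi\ii\,\alpha(l)\,g(x,y) + \partial_w g(x,y) + 2\pi\ii\,\omega_K(w', y + b_{IL})\,g(x,y),
\]
where $x\in L/\Gamma_L$, $y\in L_K$, and $b_{IL}$ is the $IL$-component of a lift of $b$ (the $L$-component of $b$ is invisible modulo $L$). This is a Heisenberg-type operator: along $L$ it is the flat connection on $C^\infty(L/\Gamma_L)$ twisted by the local system $\alpha$, and along $IL$ it is the Schrödinger-type connection of the Heisenberg module for $(IL,\omega_K)$, as in \S\ref{section:Heisenberg}. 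The exact sign and normalisation of the $L_K'$-term (and whether $b$ enters only as a shift of $y$) will be fixed by the Leibniz check below.

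The first step is the Leibniz rule $\nabla_v(u_\gamma.g) = u_\gamma.\nabla_v g + 2\pi\ii\,\gamma(v)\,u_\gamma.g$, which I would verify separately for the three types of direction.
\begin{itemize}
\item For $l\in L$ and $w\in L_K$, the derivative falls on the phase $u_\gamma(l + v - \tfrac12\omega_K^{-1}(\gamma|_{L_K'}))$ and produces $2\pi\ii\,\gamma(l)$, respectively $2\pi\ii\,\gamma(w)$. The translations in \eqref{equation:LagAct} commute with the constant-coefficient derivatives.
\item For $w'\in L_K'$, the multiplication operator $2\pi\ii\,\omega_K(w',y)$ fails to commute with the translation $y\mapsto y - \omega_K^{-1}(\gamma|_{L_K'})$. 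The commutator is $2\pi\ii\,\omega_K\bigl(w', \omega_K^{-1}(\gamma|_{L_K'})\bigr)$, which should equal $2\pi\ii\,\gamma(w')$ after fixing the sign convention for $\omega_K^{-1}$.
\end{itemize}
This bookkeeping against the half-shifts in \eqref{equation:LagAct} is where I expect the main obstacle. The danger is a stray sign or factor $\tfrac12$ between the $\star^{-\omega_K^{-1}}$ conventions and those of the module action. I would first test it on $\gamma$ lying purely in $L^*$ and purely in $(IL)^*$, and then use that both sides are multiplicative in $\gamma$.

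Next I would compute the curvature. All terms have constant coefficients except the multiplication operator in $y$, so the only non-vanishing commutators are
\[
[\nabla_w,\nabla_{w'}] = 2\pi\ii\,\omega_K(w',w) \quad\text{for } w\in L_K,\ w'\in L_K'.
\]
Hence $\nabla^2 = 2\pi\ii\,\omega_K(L)$, the form that vanishes on $L$ and restricts to $\omega_K|_{IL\times IL}$ on $IL$; the sign again has to be matched with the previous step. The parameter $b$ and the form $\alpha$ only contribute constant or linear phases, so they drop out of the curvature, consistent with the statement.

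Finally, the $(2,0)+(0,2)$-part of a real $2$-form $\eta$ is $\tfrac12(\eta - I^*\eta I)$. Since $I$ maps $L$ onto $IL$, the $IL$-component of $Ix$ is $Ix_L$, and therefore
\[
(I^*\omega_K(L)I)(x,y) = \omega_K(Ix_L, Iy_L) = -\omega_K(x_L,y_L).
\]
The last equality holds because $\omega_K$ is of type $(2,0)+(0,2)$ for $I$, i.e.\ $I^*\omega_K I = -\omega_K$. Combined with the block diagonal form of $\omega_K$ with respect to $V = L\oplus IL$, this gives
\[
\omega_K(L)^{(2,0)+(0,2)} = \tfrac12\bigl(\omega_K|_{IL} + \omega_K|_{L}\bigr) = \omega_K/2.
\]
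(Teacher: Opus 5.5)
Your proposal is correct and follows essentially the same route as the paper: an explicit connection on $C^\infty(L/\Gamma_L)\hat\otimes\mathcal S(L_K)$ with derivatives along $L\oplus L_K$, multiplication by a linear function along $L_K'$, and $b$, $\alpha$ entering as phases; a direct check of Leibniz and curvature; and the observation that $\omega_K(L)-\tfrac12\omega_K$ is of type $(1,1)$, which is equivalent to your projection $\tfrac12(\eta-I^*\eta I)$. For the record, with the convention $\omega_K(v)=\omega_K(v,-)$ the Leibniz check you deferred forces the $L_K'$-term to be $2\pi\ii\,\omega_K(y+b_{IL},w')$ rather than what you wrote, and this same sign gives $[\nabla_w,\nabla_{w'}]=2\pi\ii\,\omega_K(w,w')$, so the two signs you left open are fixed consistently.
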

    \begin{proof}
        Define $\nabla \colon (L \oplus L_K' \oplus L_{K}) \times  N^0(L) \to N^0(L) $ as 
        \[ \nabla_{(t,l_k',l_k)} g(l,v) := \partial_{(t,l_k')}g(l,v) + 2 \pi \ii [\omega_K(l_k+l_k',v+b)+\alpha(t)] g(l,v).\]
        One verifies the Leibniz rule and that the curvature is indeed given by $2\pi \ii \omega_K(L)$. For the final statement we note that $\omega_K(L)$ differs from $\frac{1}{2}\omega_K $ by $\frac{1}{2}\begin{pmatrix}
        -\omega_K|_{L \times L}& 0 \\ 0 & \omega_K|_{IL \times IL}
    \end{pmatrix}$
    which has type $(1,1)$ with respect to $I$.
    \end{proof}

\begin{definition}
    We call the cohesive module, associated to an affine Lagrangian object $(L,b,\alpha)$ obtained by endowing $N^0(L)$ with the $(0,1)$-part of the connection constructed in the proof of Lemma \ref{lemma:projFlat}, the \emph{Lagrangian module} and denote it by 
    \[ N(L,b,\alpha) = (N^0(L) \otimes \wedge^{(0,\bullet)} V_{\mathbb C}^*, \nabla^{(0,1)} ).\]
    Note that $\nabla$ depends on $b$ and $\alpha$.
\end{definition}
We already note that the definition of the Lagrangian module is only canonical up to a shift and we will consider suitable shifts for comparison with the literature later.

\begin{remark}
     One can also explicitly construct modules associated to Lagrangians which are complex with respect to $I$. 
     We will focus on the totally real Lagrangians as this condition is open and leads to more accessible computations.

     However, as the construction is of some geometric interest we now describe modules associated to complex Lagrangians. 

     Assume we have $(X, \omega)$ with $\omega = \omega_J$ as part of a Hyperkähler structure $(g, I, J, K)$.
    Take an $I$-complex Lagrangian $L$ on $(X, \omega)$. 
    It is also $\omega_K$-Lagrangian and thus functions on $L$ can be used as a module over $(C^\infty(T^{2n},{\omega_K^{-1}})$ 
    As $L$ is $I$-complex we obtain an action of $(T^*)^{0,1}$
    and as $\omega_K$ vanishes this gives rise to a curved module $N^0(L)$ for $\mathbf{A}(\omega)$.
    This module is not cohesive, but it admits a cohesive resolution.

    The intuition here is that a 
    non-commutative module generically needs to be coisotropic and a curved module generically needs to be isotropic, so a Lagrangian can be both.
\end{remark}
\subsection{Floer cohomology}
Consider now two affine Lagrangian objects $(L,b,\alpha), (\tilde L, \tilde b, \tilde \alpha)$.  
We remark that one can always find a Hyperkähler structure $g,I,J,K$ for which both $L$ and $\tilde L$ are totally real and we fix one.
Hence we obtain the cohesive modules $N(L,b,\alpha)$ and $N(\tilde L, \tilde b, \tilde \alpha)$ and we wish to compute the cohomology of 
\[ Q^\bullet := \Hom^\bullet_{\mathbf A(I,-\omega_K^{-1},\omega_K/2)}(N(L,b,\alpha),N(\tilde L, \tilde b, \tilde \alpha))\]
and compare it with the Floer complex
\[ CF^\bullet( (L,b,\alpha), (\tilde L, \tilde b, \tilde \alpha) ).\]
By the work of Fukaya \cite{Fukaya02}, the Floer cohomology is given by
\begin{equation} \label{eq:fukfloer} HF^\bullet((L,b,\alpha),(\tilde L, \tilde b, \tilde \alpha)) = H^{\bullet-\eta_{\mathrm{Fuk}}(L,\tilde L)}(L_b \cap \tilde L_{\tilde b},\mathcal L_{\tilde \alpha - \alpha}),\end{equation}
where $\eta_{\mathrm{Fuk}}(L, \tilde L)$ is the Maslov index, as used in \cite{FukayaPre}. We review it in Definition \ref{def:fukmaslov}.
\begin{remark}
    \label{remark:Maslovchoice}
    Recall that the choice of a Maslov index $\eta(L_1,L_2)$ for a pair of Lagrangians depends on additional choices. Typically one chooses a lift of $L$ to the universal cover of the Lagrangian Grassmannian. Another way of obtaining a $\mathbb Z$-grading is explained in \cite{FukayaPre} which requires the choice of an auxiliary Lagrangian subspace $L_{\mathrm{pt}}$ (see \ref{def:fukmaslov}). It turns out that the Hyperkähler structure chosen here also yields a notion of Maslov index $\eta_{\mathrm{HK}}(L_0,L_1)$, see Definition \ref{Def:HK2}. We will show in Corollary \ref{cor:compMaslov} that there is a map $L \mapsto \tau(L) \in \mathbb Z$ such that 
    \[ \eta_{\mathrm{HK}}(L_1,L_2) = \eta_{\mathrm{Fuk}}(L_1,L_2) + \tau(L_1) - \tau(L_2). \]
    This relationship means that the choices $\eta_{\mathrm{Fuk}}$ and $\eta_{\mathrm{HK}}$ are compatible: If we shift each Lagrangian $L$ in the derived category by $\tau(L)$ (see Definition \ref{def:correctedLagrange}), then the gradings induced by $\eta_{\mathrm{HK}}$ and $\eta_{\mathrm{Fuk}}$ coincide. 
\end{remark}

As $N(L,b,\alpha)$ is cohesive, $Q^\bullet$ is isomorphic to
\[ N(L,b,\alpha)^\vee \hat \otimes_{\mathbf A(I,-\omega_K^{-1},\omega_K/2)} N(\tilde L, \tilde b, \tilde \alpha) \]
where $N(L,b,\alpha)^\vee$ is the $\mathbf A(I,-\omega_K^{-1},\omega_K/2)^{op} = \mathbf A(I,\omega_K^{-1},-\omega_K/2)$-module 
    \[\Hom_{\mathbf A(I,-\omega_K^{-1},\omega_K/2)}(N(L,b,\alpha), \mathbf A(I,-\omega_K^{-1},\omega_K/2)). \]
In fact, it is not hard to see that
\begin{proposition}
    There is an isomorphism
    \[ N(L,b,\alpha)^\vee \cong N(L,-b,-\alpha) \]
    of $\mathbf A(I,\omega_K^{-1},-\omega_K/2)$-modules.
    \begin{proof}[Proof sketch]
        Consider the subalgebra $\mathcal S(\Lambda_L^*, -\omega_K^{-1}) \subset \mathcal S(\Gamma^*, -\omega_K^{-1}) = C^\infty( V/\Gamma, -\omega_K^{-1})$. We have that
        \[ S(\Gamma^*, -\omega_K^{-1}) \hat \otimes_{\mathcal S(\Lambda_L^*, -\omega_K^{-1})} H(-\omega_K^{-1}) = N(L) \quad( = (N(L,b,\alpha))^0 ) \]
        where $H(-\omega_K^{-1})$ is the Heisenberg-module constructed in \ref{section:Heisenberg}. The statement of the lemma then essentially follows from the following claim: \emph{For any non-commutative torus $V/\Gamma$ on an invertible bivector $\sigma$, there is an isomorphism}
        \[  \Hom_{C^\infty(V/\Gamma,\sigma)}(H(\sigma), C^\infty(V/\Gamma,\sigma)) = H(\sigma)^\vee \cong H(-\sigma) \]
        \emph{of $C^\infty(V/\Gamma,-\sigma)$-modules.}
        To prove the claim, choose a Lagrangian splitting $V = L \oplus L'$. Then
            \begin{align*}
                &H(\sigma) = \mathcal S(L), \quad (u_\gamma.f)(l) = u_\gamma(l + \frac{1}{2} \sigma(\gamma|_{L'})) f(l+\sigma(\gamma|_{L'}))
                \\&H(-\sigma) = \mathcal S(L), \quad (u_\gamma.g)(l) = u_\gamma(l - \frac{1}{2} \sigma(\gamma|_{L'})) g(l-\sigma(\gamma|_{L'})).
            \end{align*}
            We define the isomorphism $\Phi \colon H(-\sigma) \to H(\sigma)^\vee$ as:
            \[ \Phi \colon f \mapsto \left(g \mapsto \sum_{\gamma \in \Gamma^*} \int_L f(l) (u_{-\gamma}.g)(l)  \cdot u_\gamma\right) \]
            (which depends on the choice of a volume form on $L$).
            For every $\gamma$ we have the following identity:
            \[ \int_L (u_{\gamma}.f)(l) g(l) = \int_L f(l) (u_{\gamma}.g)(l). \]
            It follows from this formula that $\Phi$ is $C^\infty(V/\Gamma,-\sigma)$-linear in $f$ and $C^\infty(V/\Gamma,\sigma)$-linear in $g$.
  
        The proof of the lemma is completed by comparing connections.
    \end{proof}
\end{proposition}
Thus,
\begin{align} \nonumber Q^\bullet &\cong [N(L,-b,-\alpha) \hat \otimes_{\mathbb C} N(\tilde L, \tilde b, \tilde \alpha) ] / \sim \\
&\label{eq:quotient} \cong N(L \times \tilde L, (-b,\tilde b), -\alpha \otimes 1 + 1 \otimes \tilde \alpha) / \sim
\end{align}
where $\sim$ is generated by the relations
\[f \otimes g \sim u_{-\gamma}.f \otimes u_{\gamma}.g \text{ and } f \otimes \alpha \wedge g \sim f \wedge \alpha \otimes g, \alpha \in (V^{(0,1)})^{*} \]
and the quotient is understood to be completed.

To understand better the quotient \eqref{eq:quotient}, it is convenient to choose a particular realization of the cohesive module $N(L \times \tilde L, (-b,\tilde b), -\alpha \otimes 1 + 1 \otimes \tilde \alpha)$. According to the previous subsection, we have the natural splitting
\begin{equation}\label{eq:VVsplitting} V \oplus V = (L \oplus \tilde L) \oplus (IL \oplus I\tilde L) \end{equation}
into $(I \times I)$-totally real, $\omega_J$-Lagrangian subspaces. On $V \times V$ we need to consider the symplectic form \[ \omega' := -\omega_K \otimes 1 + 1 \otimes \omega_K\]
which is block-diagonal with respect to splitting \eqref{eq:VVsplitting}.
Let 
\[(IL \cap I \tilde L)^+ := \{ (Il,Il) \in IL \oplus I \tilde L ~:~ l \in L \cap \tilde L \}.\] Clearly, \[(IL \cap I \tilde L)^+ \subset (IL \oplus I \tilde L, \omega') \]
is isotropic. The symplectic form $\omega'$ descends to a symplectic form on the subquotient \[ Z := ((IL \cap I\tilde L)^+)^\perp / (IL \cap I \tilde L)^+. \] 
Choose a symplectic basis $e_1,...,e_{2n},f_1,...,f_{2n}$ of $(IL \oplus I \tilde L,\omega')$ such that $(IL \cap I\tilde L)^+ = \langle e_1,...,e_r \rangle$. Then define
\[C_1 := \langle e_{r+1},...,e_{2n} \rangle, \quad C_2 := \langle f_{r+1},...,f_{2n} \rangle \quad  B := \langle f_1,...,f_r \rangle.
\]
Thus \[(C_1 \oplus C_2, \omega'|_{C_1 \oplus C_2}) \cong Z, \quad B \cong (IL \oplus I \tilde L)/(IL \cap I\tilde L)^\perp\] and we have the Lagrangian decomposition 
\[ (IL \oplus I\tilde L, \omega') = [ (IL \cap I \tilde L)^+ \oplus C_1 ] \oplus [ B \oplus C_2].\]
We can therefore choose $N(L \times \tilde L, (-b,\tilde b), -\alpha \otimes 1 + 1 \otimes \tilde \alpha)$ to have the form
\begin{align*} N^0(L \times \tilde L) &= \mathcal S(\Gamma^*_L \times \Gamma^*_{\tilde L}) \hat \otimes_{\mathbb C} \mathcal S( (IL \cap I \tilde L)^+ \oplus C_1) \\
u_{(\gamma,\tilde \gamma)}. (u_{\gamma_L}  \otimes u_{\gamma_{\tilde L}} \otimes g)(v) &= \exp{(\pi \ii (\omega')^{-1}[(\gamma, \tilde \gamma),(\gamma,\tilde \gamma)|_{B \oplus C_2}-(\gamma_L, \gamma_{\tilde L})  ])} \cdot \\
&\quad\quad \cdot u_{\gamma_L + \gamma|_{\Gamma_L}} \otimes u_{\gamma_{\tilde L} + \tilde \gamma|_{\Gamma_{\tilde L}}} \cdot g(v-(\omega')^{-1}((\gamma,\tilde \gamma)|_{B \oplus C_2}))\\
 N(L \times \tilde L, (-b,\tilde b), -\alpha \otimes 1 + 1 \otimes \tilde \alpha) &= (N^0(L \times \tilde L) \otimes \Wedge^{(0,\bullet)} (V^* \oplus V^*), \nabla^{(0,1)})
 \end{align*}
with connection data
\begin{align}
 \label{eq:connection} \nabla \colon V \oplus V \times N^0(L \times \tilde L) &\to N^0(L \times \tilde L) \\\nonumber
 \nabla_{(u,\tilde u)} (u_{\gamma_L}  \otimes u_{\gamma_{\tilde L}} \otimes g)(v) &= \partial_{\pi_L(u)}(u_{\gamma_L}) \otimes u_{\gamma_{\tilde L}} \cdot  g(v) + u_{\gamma_L} \otimes \partial_{\pi_{\tilde L}(\tilde u)} (u_{\gamma_{\tilde L}}) \cdot g(v)  \\\nonumber
& \quad + u_{\gamma_L} \otimes u_{\gamma_{\tilde L}} \otimes [\partial_{\pi_{(IL \cap I \tilde L)^+ \oplus C_1 }(u,\tilde u) } g(v) + 2\pi \ii (\omega'((u,\tilde u),v+\tilde b - b) 
\\\nonumber&\quad + (\tilde \alpha - \alpha)(\pi_{L \oplus \tilde L}(u,\tilde u)) )g(v) ].
\end{align}
Consider the map 
\[\Gamma^* \to \Gamma^*_L \times \Gamma^*_{\tilde L},\quad\gamma \mapsto (-\gamma|_L, \gamma|_{\tilde L}).\] Its kernel is $\Lambda_L \cap \Lambda_{\tilde L}$, where $\Lambda_L$ denotes the kernel of the restriction map $\Gamma^* \to \Gamma^*_L$. Its cokernel fits into a short exact sequence
\[
\begin{tikzcd}
0 \arrow[r] & T \arrow[r] & (\Gamma^*_L \times \Gamma^*_{\tilde L})/ \Gamma^* \arrow[r] & (\Gamma_L \cap \Gamma_{\tilde L})^* \arrow[r] & 0,
\end{tikzcd} \]
which we can split as $\Gamma_L \cap \Gamma_{\tilde L}$ is free abelian so that
\begin{equation}
    \label{eq:splittorsion}
     (\Gamma^*_L \times \Gamma^*_{\tilde L})/ \Gamma^* \cong (\Gamma_L \cap \Gamma_{\tilde L})^* \times T, 
     \end{equation}
where $T$ is the finite torsion subgroup of $(\Gamma^*_L \times \Gamma^*_{\tilde L})/ \Gamma^*$. The surjection $\Gamma^*_L \times \Gamma^*_{\tilde L} \to (\Gamma_L \cap \Gamma_{\tilde L})^* \times T$ admits a \underline{set-theoretic} splitting $S$, linear only in the first argument: 
\[
\begin{tikzcd}
0 \arrow[r] & \Lambda_L \cap \Lambda_{\tilde L} \arrow[r] & \Gamma^* \arrow[r] & \Gamma_L^* \times \Gamma_{\tilde L}^* \arrow[r] & (\Gamma_L^* \times \Gamma_{\tilde L}^*) / \Gamma^* \arrow[r] \arrow[d, "\cong"] & 0 \\
            &                                             &                    &                                                 & (\Gamma_L \cap \Gamma_{\tilde L})^* \times T \arrow[lu, "S", dashed, red]        &  
\end{tikzcd} \]
The splitting $S$ determines a split inclusion between Schwartz spaces (as it is linear in the first argument)
\[ \mathcal S(S) \colon \mathcal S((\Gamma_L \cap \Gamma_{\tilde L})^* \times T) \hookrightarrow \mathcal S(\Gamma^*_L \times \Gamma^*_{\tilde L}), u_{\gamma} \mapsto u_{S(\gamma)}.\]

\begin{lemma}
    \label{lemma:LemmaIdent}
    We have an isomorphism
    \[ N^0(L \times \tilde L) / \Gamma^* \cong \mathcal S((\Gamma^*_L \times \Gamma^*_{\tilde L})/ \Gamma^* \times (\Lambda_L \cap \Lambda_{\tilde L})^* \times C_1). \]
    \begin{proof}
        Consider the intermediate quotient
        \[ \mathrm{id} \otimes p \otimes \mathrm{id} \colon  N^0(L \times \tilde L) \to N^0(L \times \tilde L) / (\Lambda_L \cap \Lambda_{\tilde L}) = \mathcal S(\Gamma^*_L \times \Gamma^*_{\tilde L}) \hat \otimes \mathcal S((IL \cap I\tilde L)^+) / (\Lambda_L \cap \Lambda_{\tilde L}) \hat \otimes \mathcal S(C_1), \]
        where 
        \[p\colon\mathcal S((IL \cap I\tilde L)^+) \to \mathcal S((IL \cap I\tilde L)^+) / (\Lambda_L \cap \Lambda_{\tilde L})\]  is the quotient map and  $\Lambda_L \cap \Lambda_{\tilde L}$ acts as
        \[ \gamma.g(Il,Il) := u_{\gamma}.g(Il,Il) = g(Il-\omega_K^{-1}(\gamma),Il-\omega_K^{-1}(\gamma)). \]
        We identify
        \[ \mathcal S((IL \cap I\tilde L)^+) / (\Lambda_L \cap \Lambda_{\tilde L}) \cong C^\infty(IL \cap I\tilde L / \omega_K^{-1}(\Lambda_L \cap \Lambda_{\tilde L})) \cong \mathcal S((\Lambda_L \cap \Lambda_{\tilde L})^*) \]
        by $g \mapsto (Il \mapsto \sum_{\lambda \in \omega_K^{-1}(\Lambda_L \cap \Lambda_{\tilde L})} g((Il+\lambda),(Il+\lambda)))$.
        We have \[N^0(L \times \tilde L) / \Gamma^* = [N^0(L \times \tilde L) / (\Lambda_L \cap \Lambda_{\tilde L})] / (\Gamma^* / \Lambda_L \cap \Lambda_{\tilde L}).\]
        Consider the subspace 
        \[  \mathcal S(S) \otimes \mathrm{id} \colon \mathcal S((\Gamma^*_L \times \Gamma^*_{\tilde L})/\Gamma^*) \hat \otimes \mathcal S((\Lambda_L \cap \Lambda_{\tilde L})^*) \hat \otimes \mathcal S(C_1) \hookrightarrow N^0(L \times \tilde L) / (\Lambda_L \cap \Lambda_{\tilde L}).\]
        The composition of $\mathcal S(S) \otimes \mathrm{id}$ with the projection onto $N^0(L \times \tilde L) / \Gamma^*$ is the desired isomorphism.
    \end{proof}
\end{lemma}

\begin{lemma}
    \label{lemma:transport}
    There is a canonical isomorphism
    \[ \Psi \colon V/(L \cap \tilde L \oplus IL \cap I \tilde L) \xrightarrow{\cong} Z = ((IL \cap I\tilde L)^+)^\perp / (IL \cap I \tilde L)^+,\quad v \mapsto (\pi_{IL}(v),\pi_{I \tilde L}(v)). \]
    Hence, $Z$ naturally carries a complex structure induced by $I$. Moreover, the symplectic form $\omega'$ on $Z$ is of type $(1,1)$ for $I$.
    \begin{proof}
        The map $V \ni v \mapsto (\pi_{IL}(v), \pi_{I\tilde L}(v)) \in IL \oplus I\tilde L$ factors through the subspace $((IL \cap I\tilde L)^+)^\perp$ because $\omega_K|_{IL \times L} = \omega_K|_{I\tilde L \times \tilde L} = 0$. Its kernel is $L \cap \tilde L$, and the preimage of $(IL \cap I\tilde L)^+$ is $IL \cap I \tilde L$. This shows that $\Psi$ is well-defined and injective. For dimensional reasons it is an isomorphism. The form $\Psi^* \omega'$ is given by
        \[ (\Psi^* \omega')(v,v') = \omega_K((\pi_{I \tilde L}-\pi_{IL})(v),v'). \]
        Thus,
        \begin{align*}
            (\Psi^* \omega')(Iv,Iv') &= \omega_K((\pi_{I \tilde L}-\pi_{IL})(Iv),Iv')\\
            &= \omega_K(I(\pi_{\tilde L}-\pi_{L})(v),Iv') \\
            &= -\omega_K(I(\pi_{I\tilde L}-\pi_{IL})(v),Iv') \\
            &= \omega_K((\pi_{I\tilde L}-\pi_{IL})(v),v') =  (\Psi^* \omega')(v,v')
        \end{align*}
        as $\pi_{I\tilde L}-\pi_{IL} = \pi_{L} - \pi_{\tilde L}$ and since $\omega_K$ is of type $(2,0)+(0,2)$ with respect to $I$.
    \end{proof}
\end{lemma}

\begin{definition}
    \label{Def:HK2}
    Let $L_1,L_2$ be $\omega_J$ Lagrangian subspaces which are totally real with respect to $I$. Hence $V = L_1 \oplus IL_1 = L_2 \oplus IL_2$. We define on $V$ the quadratic form 
    \[ t(v) = \omega_J( (\pi_{L_2} - \pi_{L_1})(v),v).\]
    Define the \emph{Hyperkähler Maslov index}
    \[ \eta_{\mathrm{HK}}(L_1,L_2) = \frac{1}{2} n^-(t)  \]
    where $n^-(t)$ denotes the negative index of $t$, i.e., the number of negative eigenvalues.
\end{definition}
\begin{lemma}
    \label{lemma:Maslov}
    Let $\omega'$ and $I$ be as in Lemma \ref{lemma:transport}. $\omega'(I(-),(-))$ defines a symmetric form on $Z$ and we have the following:
    \[ \frac{1}{2} n^+(\omega'(I(-),(-))) = \eta_{\mathrm{HK}}(L,\tilde L) \]
    where $n^+$ denotes the positive index.
    \begin{proof}
        This follows from the proof of Lemma \ref{lemma:transport}. 
    \end{proof}
\end{lemma}

So far we computed the $0$-degree \[Q^0 = \Hom^0_{\mathbf A(I,-\omega_K^{-1},\omega_K/2)}(N(L,b,\alpha),N(\tilde L, \tilde b, \tilde \alpha)) \cong \mathcal S((\Gamma^*_L \times \Gamma^*_{\tilde L})/ \Gamma^* \times (\Lambda_L \cap \Lambda_{\tilde L})^* \times C_1).\] The full space has the form
\[ Q^\bullet := (\mathcal S((\Gamma^*_L \times \Gamma^*_{\tilde L})/ \Gamma^* \times (\Lambda_L \cap \Lambda_{\tilde L})^* \times C_1) \otimes \Wedge^{(0,\bullet)} V^*, \nabla^{(0,1)}) \]
by \eqref{eq:quotient}, where $\nabla$ is obtained by descending the connection \eqref{eq:connection} on $N(L \times \tilde L, (-b,\tilde b), -\alpha \otimes 1 + 1 \otimes \tilde \alpha)$ to the quotient and transporting through the isomorphism given in Lemma \ref{lemma:LemmaIdent}. We record the following two facts about $\nabla$:
\begin{enumerate}
    \item \label{formula:res1} The restriction $\nabla \colon [L \cap \tilde L \oplus IL \cap I \tilde L] \times Q^0 \to Q^0$ is given by
    \begin{align*} \nabla_{(l,Il')}( u_{\gamma} \otimes u_{\gamma'} \otimes g ) &= \partial_l u_{\gamma} \otimes u_{\gamma'} \otimes g + u_{\gamma} \otimes \partial_{\omega_K(Il')} u_{\gamma'} \otimes g   \\ &\quad + 2\pi \ii [ \omega_K(Il',\tilde b - b) + (\tilde \alpha - \alpha)(l)] u_{\gamma} \otimes u_{\gamma'} \otimes g 
    \end{align*}
    \item \label{formula:res2}The full map $\nabla \colon V \times Q^0 \to Q^0$ has the form
    \[ \nabla_{v} (u_{\gamma} \otimes u_{\gamma'} \otimes g) = \nabla_v(u_{\gamma} \otimes u_{\gamma'}) \otimes g + u_{\gamma} \otimes u_{\gamma'} \otimes \tilde \nabla_{[v]} g\]
    where $\tilde \nabla \colon V/(L \cap \tilde L \oplus IL \cap I \tilde L) \times \mathcal S(C_1) \to \mathcal S(C_1)$ is given by 
    \[ \tilde \nabla_{[u]}(g)(v) =  \partial_{\pi_{C_1}(\Psi(u))}g(v) + 2\pi \ii \omega'(\Psi(u),v) g(v).\]
\end{enumerate}
\begin{definition}\label{def:sdrw}
    Let $C$ be a real vector space.
    Let $\alpha$ be a polynomial closed $1$-form on $C$. The \emph{Schwartz-de-Rham-Witten} complex of $\alpha$ is the real cochain complex
    \[ \mathcal S_{DRW}^\bullet(C, \alpha) := (\mathcal S_{\mathbb R}(C) \otimes \Wedge^\bullet C^*, d_{DR} + \alpha \wedge (-) ). \qedhere\]
\end{definition}
Note that such a complex has previously appeared in a different context in \cite{Prokhorenkov1999}.
\begin{definition}
    \label{def:correctedLagrange}
    Fix an $I$-totally real, $\omega_J$-Lagrangian subspace $L_{\mathrm{pt}} \subset V$. Given an affine Lagrangian $(L,b,\alpha)$ we define the \emph{corrected Lagrangian module}
    \[ \tilde N(L,b,\alpha) := N(L,b,\alpha)[-\tau(L)] \]
    where $\tau(L) = \eta_{\mathrm{HK}}(L,L_{\mathrm{pt}})$.
\end{definition}

\begin{proposition}
    \label{prop:SpectralFloer}
    There is a spectral sequence $E^{pq}_*$ converging to the cohomology \[ E^{pq}_* \Rightarrow H^\bullet(\Hom^\bullet_{\mathbf A(I,-\omega_K^{-1},\omega_K/2)}(N(L,b,\alpha),N(\tilde L, \tilde b, \tilde \alpha))).\]
    Its first page is 
    \[ E_1^{pq} = H^q(L_b \cap \tilde L_{\tilde b}, \mathcal L_{\tilde \alpha - \alpha}) \otimes_{\mathbb R} \mathcal S^p_{DRW}(C_1, dh) \]
    where $h$ is a quadratic function on $C_1$ whose negative index is equal to the Hyperkähler Maslov index $\eta_{\mathrm{HK}}(L, \tilde L)$.
    It collapses on the second page to
    \[ E_2^{pq} = \begin{cases}
        &H^q(L_b \cap \tilde L_{\tilde b},\mathcal L_{\tilde \alpha - \alpha}) \text{, if } p = \eta_{HK}(L,\tilde L) \\
        &0 \text{, else. }
    \end{cases}\]
    \begin{proof}
        On the exterior algebra $\Wedge^{(0,\bullet)} V^*$ we introduce the decreasing filtration
        \[ F^i [\Wedge^{(0,\bullet)} V^*] = \Wedge^{(0,\geq i)} (V/(L \cap \tilde L \oplus IL \cap I \tilde L)^* \wedge \Wedge^{(0,\bullet)} V^*.\]
        Its filtered pieces are \[F^i / F^{i+1} = \Wedge^{(0,i)} (V/(L \cap \tilde L \oplus IL \cap I \tilde L)^* \otimes \Wedge^{(0,\bullet)} (L \cap \tilde L \oplus IL \cap I \tilde L)^*.\]
        $F$ induces a (bounded) filtration also on $Q^\bullet$:
    \[ F^i Q^\bullet = \mathcal S((\Gamma^*_L \times \Gamma^*_{\tilde L})/ \Gamma^* \times (\Lambda_L \cap \Lambda_{\tilde L})^* \times C_1) \otimes F^i [\Wedge^{(0,\bullet)} V^*] \]
        The differential $d = \nabla^{(0,1)}$ on $Q^\bullet$ is compatible with it. The associated spectral sequence has terms
        \begin{align*}
            E_0^{pq} &= [\mathcal S((\Gamma^*_L \times \Gamma^*_{\tilde L})/ \Gamma^* \times (\Lambda_L \cap \Lambda_{\tilde L})^*) \otimes \Wedge^{(0,q)} (L \cap \tilde L \oplus IL \cap I \tilde L)^*] \\
            & \quad \hat \otimes [\mathcal S(C_1) \otimes  \Wedge^{(0,p)} (V/(L \cap \tilde L \oplus IL \cap I \tilde L)^*]
        \end{align*}
        with differential $d_0 \colon E_0^{pq} \to E_0^{p(q+1)}$ given by \ref{formula:res1}, in particular acting only on the first factor. Now \[\mathcal S((\Gamma^*_L \times \Gamma^*_{\tilde L})/ \Gamma^* \times (\Lambda_L \cap \Lambda_{\tilde L})^*) \otimes \Wedge^{(0,\bullet)} (L \cap \tilde L \oplus IL \cap I \tilde L)^*\] is isomorphic to \[\oplus_{t \in T} (\mathcal S((\Gamma_L \cap \Gamma_{\tilde L})^* \times (\Lambda_L \cap \Lambda_{\tilde L})^*) \otimes \Wedge^{(0,\bullet)} (L \cap \tilde L \oplus IL \cap I \tilde L)^*, \nabla^{(0,1)}|_{L \cap \tilde L \oplus IL \cap I \tilde L})\]
        by equation \ref{eq:splittorsion}.
        The summands are isomorphic to Dolbeault complexes with twisted differentials:
    \[ \Omega^{(0,\bullet)}( (L \cap \tilde L) / (\Gamma_L \cap \Gamma_{\tilde L}) \times (IL \cap I \tilde L) / \omega_K^{-1}(\Lambda_L \cap \Lambda_{\tilde L})), \bar \partial + 2\pi \ii (\tilde \alpha - \alpha + \omega_K(b-\tilde b))^{(0,1)} ). \]
    Its cohomology is hence
    \begin{equation} = \begin{cases}
        &\Wedge^\bullet  (L \cap \tilde L)_{\mathbb C} \text{, if } (\tilde \alpha - \alpha)|_{L \cap \tilde L} \in (\Gamma_L \cap \Gamma_{\tilde L})^* \text{ and } L_b \cap \tilde L_{\tilde b} \neq \emptyset \\
        & 0 \text{, else. }
    \end{cases}
    \end{equation}
    Moreover, as $T$ parametrises the connected components of $L_b \cap \tilde L_{\tilde b}$, we get 
    \[ H^\bullet(L_b \cap \tilde L_{\tilde b}, \mathcal L_{\tilde \alpha - \alpha}) \]
    after summing over $T$ (here $\mathcal L_{\tilde \alpha - \alpha}$ denotes the local system associated to the $1$-form $\tilde \alpha - \alpha$). Hence, the first page of the spectral sequence reads
    \[ E_1^{pq} = H^q(L_b \cap \tilde L_{\tilde b}, \mathcal L_{\tilde \alpha - \alpha}) \otimes [\mathcal S(C_1) \otimes  \Wedge^{(0,p)} (V/(L \cap \tilde L \oplus IL \cap I \tilde L)^*] \]
    and the differential $E_1^{pq} \to E_1^{(p+1)q}$ is induced by $\tilde \nabla$, see \ref{formula:res2}, and acts on the latter factor only. Recall that 
    \[ C_1 \oplus C_2 = Z \cong V/(L \cap \tilde L \oplus IL \cap I \tilde L) \]
    is a Lagrangian splitting for $\omega'$ and $\omega'$ is of type $(1,1)$ for $I$. 
    We can choose $C_1$ to be totally real for $I$ and $C_2 := IC_1$.
    Then 
    \[ \mathcal S(C_1) \otimes  \Wedge^{(0,\bullet)} (V/(L \cap \tilde L \oplus IL \cap I \tilde L)^* \cong (\mathcal S_{\mathbb R}(C_1) \otimes_{\mathbb R} \Wedge^\bullet  C_1) \otimes_{\mathbb R} \mathbb C \]
    where the differential on $\mathcal S_{\mathbb R}(C_1) \otimes_{\mathbb R} \Wedge^\bullet  C_1$ reads
    \[ d \colon C_1 \times \mathcal S_{\mathbb R}(C_1) \to \mathcal S_{\mathbb R}(C_1), \quad d_c(g)(v) = \partial_c g(v) - 2\pi \omega'(Ic,v) g(v).\]
    Note that $h := \omega'(I(-),(-))$ is a symmetric non-degenerate bilinear form on $C_1$. Diagonalise $h$ as
    \[ A h A^{*} = \mathrm{diag}(-1,...,-1,1,...,1). \]
    Let $r$ be the number of negative eigenvalues and $s$ the number of positive eigenvalues. By Lemma \ref{lemma:Maslov}, $s$ coincides with the Hyperkähler Maslov index $\eta_{\mathrm{HK}}(L_1,L_2)$. Define the complexes
    \[ S^- := \mathcal S_{\mathbb R}(\mathbb R) \xrightarrow{\partial_x + 2\pi x } \mathcal S_{\mathbb R}(\mathbb R),\quad S^+ := \mathcal S_{\mathbb R}(\mathbb R) \xrightarrow{\partial_x - 2\pi x } \mathcal S_{\mathbb R}(\mathbb R).\]
    Using $A$, we obtain an isomorphism of Fréchet complexes $\mathcal S_{\mathbb R}(C_1) \otimes_{\mathbb R} \Wedge^\bullet  C_1 \cong (S^-)^{\hat \otimes r} \hat \otimes (S^+)^{\hat \otimes s} $.
    There are continuous homotopy equivalences
    \[ S^- \simeq (\mathbb R \to 0) = \mathbb R,\quad S^+ \simeq (0 \to \mathbb R) =\mathbb R[1].\]
    We conclude that 
    \[ \mathcal S(C_1) \otimes  \Wedge^{(0,\bullet)} (V/(L \cap \tilde L \oplus IL \cap I \tilde L)^* \simeq \mathbb C[1]^{\otimes s} =  \mathbb C[s]\]
    so
    \[ E_2^{pq} = \begin{cases}
        &H^q(L_b \cap \tilde L_{\tilde b},\mathcal L_{\tilde \alpha - \alpha}), \text{ if } p = s \\
        &0, \text{ else. }
    \end{cases} \qedhere\]
    \end{proof}
    
\end{proposition}
\begin{theorem}
    \label{thm:FloerCohomology}
    Let $(L,b,\alpha)$ and $(\tilde L, \tilde b, \tilde \alpha)$ be affine Lagrangian objects. Then the derived Hom between $\tilde N(L,b,\alpha)$ and $\tilde N(\tilde L,\tilde b, \tilde \alpha)$ coincides with Floer cohomology:
    \[ H^\bullet(\Hom^\bullet_{\mathbf A(I,-\omega_K^{-1},\omega_K/2)}(\tilde N(L,b,\alpha),\tilde N(\tilde L, \tilde b, \tilde \alpha))) \cong HF^{\bullet}( (L,b,\alpha), (\tilde L, \tilde b, \tilde \alpha) ).\]

    \begin{proof}
        From the spectral sequence in Proposition \ref{prop:SpectralFloer} we find that
        \[
        H^\bullet(\Hom^\bullet_{\mathbf A(I,-\omega_K^{-1},\omega_K/2)}( N(L,b,\alpha),N(\tilde L, \tilde b, \tilde \alpha))) \cong H^{\bullet-\eta_{\mathrm{HK}}(L,\tilde L)}(L_b \cap \tilde L_{\tilde b},\mathcal L_{\tilde \alpha - \alpha})\]
        whereas 
        \begin{equation*} HF^\bullet((L,b,\alpha),(\tilde L, \tilde b, \tilde \alpha)) = H^{\bullet-\eta_{\mathrm{Fuk}}(L,\tilde L)}(L_b \cap \tilde L_{\tilde b},\mathcal L_{\tilde \alpha - \alpha}),\end{equation*}
        according to equation \ref{eq:fukfloer}.
        Hence by Definition \ref{def:correctedLagrange} and Corollary \ref{cor:compMaslov} below it follows that
        \[ H^\bullet(\Hom^\bullet_{\mathbf A(I,-\omega_K^{-1},\omega_K/2)}(\tilde N(L,b,\alpha),\tilde N(\tilde L, \tilde b, \tilde \alpha))) \cong HF^{\bullet}( (L,b,\alpha), (\tilde L, \tilde b, \tilde \alpha) ). \qedhere\]
        
    \end{proof}
\end{theorem}

\subsection{Maslov index}
Here we will compare the grading of Floer cohomology arising in the (affine) Fukaya category of a symplectic torus $(V/\Gamma,\omega)$ as used in \cite{FukayaPre}, and the grading arising in Section \ref{section:floerdef}. 
\begin{definition}[{\cite[Appendix]{KashiwaraSchapira}}]
    Let $L_0,L_1,L_2 \subset V$ be Lagrangian subspaces. Define the quadratic form
    \[ q \colon L_1 \oplus L_2 \oplus L_3 \to \mathbb R, \quad q(x_1,x_2,x_3) = \omega(x_1,x_2) + \omega(x_2,x_3) + \omega(x_3,x_1). \]
    The \emph{negative Kashiwara index} is defined as
    \[ \tau^-(L_1,L_2,L_3) := n^-(q) \]
    where $n^-$ is the negative index of $q$ (i.e., the number of negative eigenvalues).
\end{definition}
We fix a reference Lagrangian subspace $L_{\mathrm{pt}} \subset V$. 
\begin{definition}[{\cite[Lemma-Definition 2.12]{FukayaPre}}]
\label{def:fukmaslov}
    Let $L_1,L_2 \subset V$ be Lagrangian subspaces. Define \emph{Fukaya's Maslov index} (relative to $L_{\mathrm{pt}}$) as 
    \[ \eta_{\mathrm{Fuk}}(L_1,L_2) := 2n - \tau^-(L_1,L_2,L_{\mathrm{pt}}) - \dim(L_2 \cap L_{\mathrm{pt}}) - \dim(L_1 \cap L_{2}). \]
    where $2n = \dim_{\mathbb R}(V)$.
\end{definition}
Note that this is the definition from the preprint \cite{FukayaPre}, not the published version \cite{Fukaya02}.
Let $g,I,J,K$ be a Hyperkähler structure on $V$ with $\omega_J = \omega$ and suppose that $L_{\mathrm{pt}}$ is totally real for $I$.
Recall that we defined the Hyperkähler Maslov index $\eta_{\mathrm{HK}}$ in this situation, given in Definition \ref{Def:HK2}.
\begin{proposition}
        Let $\eta \in \{\eta_{\mathrm{HK}},\eta_{\mathrm{Fuk}} \}$. Then
        \begin{enumerate}
            \item $\tau^-(L_1,L_2,L_3) = \eta(L_2,L_1) + \eta(L_3,L_2) + \eta(L_1,L_3)$,
            \item $\eta(L_1,L_2) + \eta(L_2,L_1) = n - \dim(L_1 \cap L_2)$.
        \end{enumerate}
        \begin{proof}
            The statements for $\eta = \eta_{\mathrm{Fuk}}$ follow from \cite[Lemma-Definition 2.12]{FukayaPre}. Let us now show the identities for $\eta = \eta_{\mathrm{HK}}$. The equality $\eta_{\mathrm{HK}}(L_1,L_2) + \eta_{\mathrm{HK}}(L_2,L_1) = n - \dim(L_1 \cap L_2)$ follows from the definition of $\eta_{\mathrm{HK}}$ (as the kernel of the quadratic form is $L_1 \cap L_2 \oplus IL_1 \cap IL_2$). It remains to show that
            \[\tau^-(L_1,L_2,L_3) = \eta_{\mathrm{HK}}(L_2,L_1) + \eta_{\mathrm{HK}}(L_3,L_2) + \eta_{\mathrm{HK}}(L_1,L_3).\]
            We define on $V \oplus V \oplus V$ the quadratic form 
            \[ t(v_1,v_2,v_3) = \omega((\pi_{L_1}-\pi_{L_2})(v_1),v_2) + \omega((\pi_{L_2}-\pi_{L_3})(v_2),v_3) + \omega((\pi_{L_3}-\pi_{L_1})(v_3),v_1).\] We decompose 
            \[ V \oplus V \oplus V = (L_1 \oplus IL_1) \oplus (L_2 \oplus IL_2) \oplus (L_3 \oplus IL_3). \]
            We have 
            \begin{align*}
                t((l_1+il_1'),(l_2+il_2'),(l_1+il_2') = &\omega(l_1,l_2) + \omega(l_2,l_3) + \omega(l_3,l_1) \\
                &+ \omega(l_1',l_2') + \omega(l_2',l_3') + \omega(l_3',l_1').
            \end{align*}
            Hence the negative index of $t$ is 
            \[ n^-(t) = 2 \tau^-(L_1,L_2,L_3). \]
            Now define an isomorphism $\Psi \colon V  \oplus V \oplus V \to V \oplus V \oplus V $
            \begin{align*}
                \Psi = \frac{1}{2} \begin{pmatrix}
                    1 & 1 & -1 \\
                    1 & -1 & -1 \\
                    1 & -1 & 1
                \end{pmatrix} = \begin{pmatrix} 1 & 0 & 1 \\
                1 & -1 & 0 \\
                0 & -1 & 1\end{pmatrix}^{-1}.
            \end{align*}
            Writing $\Psi(v_1,v_2,v_3) = (u_1,u_2,u_3)$ we find that 
            \begin{align*}
                t(v_1,v_2,v_3) = \omega((\pi_{L_1}-\pi_{L_2})(u_1),u_1) + \omega((\pi_{L_2}-\pi_{L_3})(u_2),u_2) + \omega((\pi_{L_3}-\pi_{L_1})(u_3),u_3)
            \end{align*}
            and so 
            \[ 2 \tau^-(L_1,L_2,L_3) = n^-(t) = 2 (\eta_{\mathrm{HK}}(L_2,L_1) + \eta_{\mathrm{HK}}(L_3,L_2) + \eta_{\mathrm{HK}}(L_1,L_3)). \qedhere \]
        \end{proof}
\end{proposition}
\begin{corollary}
\label{cor:compMaslov}
        The indices $\eta_{\mathrm{HK}}$ and $\eta_{\mathrm{Fuk}}$ are cohomologous, i.e., there exists for every $L$ a number $\tau(L) \in \mathbb Z$ such that $\eta_{\mathrm{HK}}(L_1,L_2) = \eta_{\mathrm{Fuk}}(L_1,L_2) + \tau(L_1) - \tau(L_2)$.
        \begin{proof}
            By the first part of the proposition we have
            \[ \eta_{\mathrm{HK}}(L_1,L_2) + \eta_{\mathrm{HK}}(L_2,L_3) + \eta_{\mathrm{HK}}(L_3,L_1) = \eta_{\mathrm{Fuk}}(L_1,L_2) + \eta_{\mathrm{Fuk}}(L_2,L_3) + \eta_{\mathrm{Fuk}}(L_3,L_1).\]
            Apply this to $L_3 = L_{\mathrm{pt}}$ to get
            \[ \eta_{\mathrm{HK}}(L_1,L_2) = \eta_{\mathrm{Fuk}}(L_1,L_2) + \eta_{\mathrm{Fuk}}(L_2,L_{\mathrm{pt}}) + \eta_{\mathrm{Fuk}}(L_{\mathrm{pt}},L_1) - \eta_{\mathrm{HK}}(L_2,L_{\mathrm{pt}}) - \eta_{\mathrm{HK}}(L_{\mathrm{pt}},L_1).\]
            By the second part of the proposition we obtain 
            \[ \eta_{\mathrm{HK}}(L_1,L_2) = \eta_{\mathrm{Fuk}}(L_1,L_2) + \eta_{\mathrm{Fuk}}(L_2,L_{\mathrm{pt}})-\eta_{\mathrm{HK}}(L_2,L_{\mathrm{pt}}) -\eta_{\mathrm{Fuk}}(L_1,L_{\mathrm{pt}})  + \eta_{\mathrm{HK}}(L_1,L_{\mathrm{pt}})\]
            which has the desired form. In fact, $\eta_{\mathrm{Fuk}}(L,L_{\mathrm{pt}}) = 0$ so it suffices to let $\tau(L) = \eta_{\mathrm{HK}}(L,L_{\mathrm{pt}})$.
    \end{proof}
\end{corollary}

\bibliographystyle{alphaurl}
\bibliography{b1refsupdated}

\setlength{\parskip}{0mm plus0.4mm minus0.4mm}

\end{document}